\newcommand{\quand}{\quad \text{ and } \quad}
\newcommand{\ii}{{\rm{i}}}
\newcommand{\tmu}{{\mu_h}}
\newcommand{\Pio}{\Pi_{0}}
\newcommand{\ur}{{u_{h,\star}^{n+1}}}
\newcommand{\vertiii}[1]{{\left\vert\kern-0.25ex\left\vert\kern-0.25ex\left\vert #1 
		\right\vert\kern-0.25ex\right\vert\kern-0.25ex\right\vert}}
\DeclareRobustCommand\widecheck[1]{{\mathpalette\@widecheck{#1}}}
\def\@widecheck#1#2{%
	\setbox\z@\hbox{\m@th$#1#2$}%
	\setbox\tw@\hbox{\m@th$#1%
		\widehat{%
			\vrule\@width\z@\@height\ht\z@
			\vrule\@height\z@\@width\wd\z@}$}%
	\dp\tw@-\ht\z@
	\@tempdima\ht\z@ \advance\@tempdima2\ht\tw@ \divide\@tempdima\thr@@
	\setbox\tw@\hbox{%
		\raise\@tempdima\hbox{\scalebox{1}[-1]{\lower\@tempdima\box
				\tw@}}}%
	{\ooalign{\box\tw@ \cr \box\z@}}}
\newcommand{\hf}{\frac{1}{2}}
\newcommand{\veps}{\varepsilon}
\newcommand{\nm}[1]{\| #1 \|}
\newcommand{\ip}[1]{\langle #1 \rangle}
\newtheorem{THM}{Theorem}[section] 
\newtheorem{ASSP}{Assumption}[section]
\newtheorem{PROP}{Proposition}[section]
\newtheorem{LEM}{Lemma}[section]
 \newtheorem{COR}{Corollary}[section]
\newtheorem{REM}{Remark}[section]
\newtheorem{examp}[subsubsection]{Example}
\begin{document}
\baselineskip=2pc
\begin{center}
	\Large {\bf Error analysis of Runge--Kutta discontinuous Galerkin methods for linear 
time-dependent partial differential equations\footnote{Acknowledgment: The authors would like to 
express their appreciation to Prof. Qiang Zhang at Nanjing University, whose insightful 
comments helped improve this paper. ZS also wants to thank Prof. Yulong Xing and Prof. Ruchi Guo 
at The Ohio State University for helpful discussions. }}
\end{center}
\centerline{
	Zheng Sun\footnote{Department of Mathematics, The Ohio State University,
		Columbus, OH 43210, USA. E-mail: sun.2516@osu.edu.} and 
	Chi-Wang Shu\footnote{Division of Applied Mathematics, Brown University,
		Providence, RI 02912, USA. E-mail: chi-wang\_shu@brown.edu.
        Research supported by NSF grant DMS-1719410.}
}
\vspace{.2in}
\centerline{\bf Abstract} \medskip

In this paper, we present error estimates of fully discrete Runge--Kutta discontinuous Galerkin (DG) schemes for linear time-dependent partial differential equations. The analysis applies to explicit Runge--Kutta time discretizations of any order. For spatial discretization, a general discrete operator is considered, which covers various DG methods, such as the upwind-biased DG method, the central DG method, the local DG method and the ultra-weak DG method. We obtain error estimates for stable and consistent fully discrete schemes, if the solution is sufficiently smooth and a spatial operator with certain properties exists. Applications to schemes for hyperbolic conservation laws, the heat equation, the dispersive equation and the wave equation are discussed. In particular, we provide an alternative proof of optimal error estimates of local DG methods for equations with high order derivatives in one dimension, which does not rely on energy inequalities of auxiliary unknowns.\bigskip

\noindent {\bf Key words}: fully discrete schemes, Runge--Kutta methods, discontinuous Galerkin methods, error estimates, time-dependent problems. \bigskip

\noindent {\bf AMS subject classifications}: 	65M15, 	65L70, 65M60. 

\section{Introduction}
\setcounter{equation}{0}
Let $\Omega\subset \mathbb{R}^d$ be the spatial domain. $u=u(x,t):\Omega\times(0,+\infty)\to \mathbb{R}^m$ is a vector-valued function and $L  =\sum_{|\alpha|\leq q} a_\alpha(x) D^\alpha: \Omega \to \mathbb{R}^m$ is a $q$th order differential operator.
The time-dependent partial differential equation (PDE)
\begin{equation}\label{eq-cont}
\partial_t u = Lu
\end{equation}
 is usually discretized in a two-step procedure. The first step is to apply spatial discretization to obtain a method-of-lines scheme
\begin{equation}\label{eq-semi}
\partial_tu_h= L_hu_h.
\end{equation}	
The resulted linear autonomous system is then discretized with a time integrator in the second step. In this paper, we are particularly interested in the case that $L_h$ arises from discontinuous Galerkin (DG) finite element approximations, although
the analysis also applies to other spatial discretization methods. For time discretizations, we consider explicit Runge--Kutta (RK) time stepping methods, which are in the form of a truncated Taylor series when applied to \eqref{eq-semi}. The fully discrete scheme can be written as
\begin{equation}\label{eq-fully}
u_h^{n+1} = R_s(\tau L_h ) u_h^n,\qquad R_s(\tau L_h) = \sum_{i = 0}^s\alpha_i (\tau L_h)^i.
\end{equation}
Here $s$ is the number of stages, $\tau$ is the time step size and $\{\alpha_i\}_{i=0}^s$ are constants dependent on the choice of the RK method. We will perform error estimates of the fully discrete scheme \eqref{eq-fully} under certain assumptions, and provide examples to various DG schemes for hyperbolic conservation laws, the heat equation, the dispersive equation and the wave equation, etc. 

There has been a long history on analyzing convergence properties of the fully discrete schemes for linear time-dependent PDEs. The equivalence theorem given by Lax and Richtmyer in 1956 states that a consistent finite difference approximation of a linear equation converges if and only if it is stable \cite[Section 8]{lax1956survey}. Then with a recurrent argument, a unified error estimate based on local truncation error analysis can be established for general linear finite difference schemes \cite[Theorem 4.2.3]{gustafsson2013time}. However, the same procedure can not be applied to Galerkin schemes due to the phenomenon of supraconvergence, in that the finite difference schemes reformulated from the Galerkin schemes may exhibit lower order accuracy or even be inconsistent when measured with truncation error \cite{kreiss1986supra,zhang2003analysis}. Instead, arguments with an appropriately constructed spatial projection (or interpolation) operator are usually
used, replacing the local truncation error analysis in space. For parabolic equations and second order hyperbolic equations, the steady state problems correspond to an elliptic equation, and the elliptic projection can be used to derive error estimates. Along this stream of research, error estimates have been obtained for Galerkin schemes with multistep \cite{gekeler1976linear,dougalis1979multistep,baker1980multistep} and (implicit) RK time discretizations \cite{keeling1990galerkin}. 

The DG methods are a class of finite element methods using discontinuous piecewise polynomial spaces. It was first proposed by Reed and Hill in \cite{reed1973triangular} for solving the transport equation and then received its major development in a series of work by Cockburn et al. for solving hyperbolic conservation laws \cite{rkdg1,rkdg2,rkdg3,rkdg4,rkdg5}. After that, based on successful numerical experiments by Bassi and Rebay \cite{bassi1997high}, Cockburn and Shu proposed the local DG (LDG) method for solving convection-diffusion systems \cite{cockburn1998local}, which was soon generalized for equations with higher order derivatives \cite{xu2010local}. In the past decades, different variants of
DG methods have been developed, such as the central DG method \cite{liu2007central}, the direct DG method \cite{liu2009direct} and the ultra-weak DG method \cite{cheng2008discontinuous}, just to name a few. 
Error estimates of these DG methods have been studied in various of contexts, including hyperbolic conservation laws \cite{zhang2004error,zhang2006error,zhang2010stability,meng2016optimal,sun2017stability,liu2018optimal,liu2020optimal}, convection-diffusion systems \cite{cockburn1998local,wang2015stability,liu2015optimal,wang2016stability,cheng2017application}, the KdV equation \cite{yan2002local,xu2007error,bona2013conservative},  the Camassa--Holm equation \cite{xu2008local}, the wave equation  \cite{xing2013energy,chou2014optimal},  the improved Boussinesq equation \cite{li2020energy}, high odd order equations \cite{xu2012optimal} and high even order equations \cite{dong2009analysis}, etc. 

The method-of-lines DG schemes are usually discretized with an explicit RK time integrator and the resulted fully discrete schemes are referred to as Runge--Kutta discontinuous Galerkin (RKDG) schemes. Besides the simplicity of implementation, the popularity of explicit RK methods is also due to its compatibility with limiters to preserve certain properties of continuum equations and to achieve better robustness. One of the difficulties on error analysis of RKDG schemes beyond method of lines is to establish the fully discrete $L^2$ stability with explicit RK time discretizations. Although this is well understood for diffusive problems for general explicit RK schemes \cite{gottlieb2001strong}, the stability for nearly energy-conserving systems is nontrivial and sometimes a stricter time step constraint has to be enforced. Recently, based on a few earlier work \cite{tadmor2002semidiscrete,zhang2010stability,sun2017rk4,ranocha2018l_2}, a systematic stability analysis has been performed by Sun and Shu in \cite{sun2019strong} for general linear semi-negative operators and also by Xu et al. in the context of RKDG schemes for linear conservation laws \cite{xu2019L2,xu2019superconvergence}. A few stabilization approaches have also been proposed recently \cite{ketcheson2019relaxation,sun2019enforcing}. Thanks to these results, the involved energy estimation in the error analysis can be avoided by referring to stability properties as a black box. 

The other issue is to find suitable projection operators for error analysis. For most cases, the projections constructed for semidiscrete DG schemes can be directly used in the fully discrete context. While for LDG methods, the projections are usually defined for all auxiliary unknowns in the mixed formulation and can not be applied to the current framework. Motivated by the construction of the elliptic projection, we define the operator by formally solving the steady state problem. The technicality is that the kernel of $L_h$ can be nonzero and the inverse has to be defined on a suitable
subspace. The resulted operator (detailed in Section \ref{sec-xu}) works directly with the primal formulation in one dimension, and it indeed retrieves the initial projection used in \cite{xu2012optimal} for the third order dispersive equation. As a result, optimal error estimates can be obtained without energy inequalities of the 
auxiliary unknowns, which simplifies the proof in \cite{xu2012optimal} for odd order equations and provides an alternative interpretation of the proof in \cite{dong2009analysis} for even order equations in one dimension.

This paper is built upon above ingredients. We show that for sufficiently smooth exact solutions, if there exists a spatial operator with certain properties, a stable and consistent fully discrete RKDG scheme has the convergence rate $\mathcal{O}(\tau^p + h^{k+k'})$. Here $\tau$ is the time step size, $h$ is the spatial mesh size, $p$ is the linear accuracy order of the time integrator\footnote{We refer this as the linear order throughout the paper.}, $k$ is the polynomial degree and $k'\in[0,1]$ depends on particular problems. It 
is worth mentioning that the required regularity is independent of the
number of stages of the RK method, which is achieved by using a carefully chosen reference solution \eqref{eq-ur} in the proof. Applications to various DG schemes are given in the paper. We also provide examples with continuous Galerkin (CG) finite element methods and with Fourier Galerkin (FG) methods for possible extensions to other types of spatial discretizations. Finally, to compare our work with error analysis of ordinary differential equations, we discuss a different approach, in which we assume the error of the method-of-lines scheme and compare the fully discrete solution with the semidiscrete solution for error estimates. This argument requires the construction of a different projection operator and 
currently it applies only to a few schemes. When writing this paper, the authors are inspired by the work of Xu et al. on error analysis of the fourth order RKDG scheme for linear hyperbolic conservation laws \cite{xu2019error}. In their recent preprint \cite{xu2019superconvergence}, some techniques and results have been further explored for superconvergence analysis. Compared with \cite{xu2019error,xu2019superconvergence}, our work includes a larger class of DG methods and also applies to problems beyond hyperbolic conservation laws. The language in this paper also shares similarity with that by Chen in \cite{chen2009unified}, in which the author explained the Lax equivalence theorem under a general framework and provided examples of different schemes for steady state elliptic equations. Compared with \cite{chen2009unified}, our paper emphasizes more on analysis of time-dependent problems by going through the recurrence relationship between time steps.

The rest of the paper is organized as follows. We start with clarifying notations, preliminaries and assumptions in Section \ref{sec-not}. Then error estimates of the semidiscrete scheme and the fully discrete scheme are given in Section \ref{sec-errest}. After that, we apply the fully discrete error analysis to various DG schemes, as well as some CG and FG schemes, in Section \ref{sec-app}. The error analysis built directly upon semidiscrete results is discussed in Section \ref{sec-ano}. Finally, we close the paper with conclusions in Section \ref{sec-con}. 


\section{Notations and assumptions}\label{sec-not}
\setcounter{equation}{0}
Let $V = L^2(\Omega; \mathbb{R}^m)$ be the space of interest, equipped with the inner product  $\ip{\cdot,\cdot}$ and the norm $\nm{\cdot} = \sqrt{\ip{\cdot,\cdot}}$. $V_h \subset V$ is the space of discrete solutions. To be more specific, we have $u(\cdot, t) \in V$ and $u_h(\cdot, t), u_h^n(\cdot) \in V_h$. As a convention, we will omit the variable $x$ and denote by $u(t) = u(x,t)$ and $u_h(t) = u_h(x,t)$ when there is no confusion. Throughout the paper, we use $\Pio:V\to V_h$ to represent the $L^2$ projection. $\Pi$ is a projection or an interpolation operator, which maps a sufficiently smooth function to $V_h$.  $L_h:V_h\to V_h$ is the discrete operator approximating $L$. For simplicity, uniform time steps are assumed and $t^n = n \tau $. We also assume $\tau \leq 1$ and $\tau \nm{L_h} \leq \lambda <1$. 

\begin{PROP}[Gr\"onwall's inequalities]\label{prop-gronwall}
	Let $a$ be a nonnegative constant and 
	\begin{equation}
	 \sigma(a,t) = \left\{\begin{matrix}\frac{e^{at}-1}{a}, & a>0,\\
	 t, &a = 0.\end{matrix}\right.
	 \end{equation}
	\begin{enumerate}
	\item Suppose $\frac{d}{dt} y(t) \leq a y(t) + b(t)$. Then we have
	\begin{equation}\label{eq-gronwall}
		y(t) \leq e^{at} y(0) + \int_{0}^t e^{a(t-r)}b(r)dr \leq e^{at} y(0) + \sigma(a,t)\sup_t |b(t)|.
	\end{equation}
	\item Suppose $	y_{n+1} \leq a y_{n} + b_{n}$. Then with the convention $0^0 = 1$, we have 
	\begin{equation}\label{eq-dgronwall}
	y_{n+1} \leq a^{n+1} y_0 + \sum_{i=0}^{n} a^{i} b_{n-i}\leq a^{n+1} y_0 +\left( \sum_{i=0}^{n} a^{i} \right)\sup_n |b_{n}|.
	\end{equation}
	\end{enumerate}
\end{PROP}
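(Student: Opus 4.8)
The plan is to treat the two parts independently, in each case reducing the inequality to a one-line manipulation (an integrating factor in the continuous case, a telescoping/induction in the discrete case) and then bounding the accumulated forcing term by its supremum.

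For part 1, I would introduce the integrating factor $e^{-at}$ and set $z(t) = e^{-at} y(t)$. Since $y$ satisfies a differential inequality it may be taken to be $C^1$, so $z'(t) = e^{-at}\big(y'(t) - a y(t)\big) \le e^{-at} b(t)$ by hypothesis. Integrating this from $0$ to $t$ gives $z(t) - z(0) \le \int_0^t e^{-ar} b(r)\,dr$, and multiplying through by $e^{at}$ yields the first inequality in \eqref{eq-gronwall}. For the second inequality I would pull $\sup_t|b(t)|$ out of the integral and evaluate $\int_0^t e^{a(t-r)}\,dr$, which equals $(e^{at}-1)/a$ when $a>0$ and $t$ when $a=0$; this is precisely $\sigma(a,t)$, so the stated bound follows. (The two cases can be unified by continuity, since $(e^{at}-1)/a \to t$ as $a \to 0^+$.)

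For part 2, I would argue by induction on $n$ (equivalently, divide by $a^{n+1}$ and telescope when $a>0$, and argue directly when $a=0$). The base case $n=0$ is immediate from $y_1 \le a y_0 + b_0$. For the inductive step, substitute the bound $y_n \le a^n y_0 + \sum_{i=0}^{n-1} a^i b_{n-1-i}$ into $y_{n+1} \le a y_n + b_n$, distribute the factor $a$, and re-index the sum to recover $y_{n+1} \le a^{n+1} y_0 + \sum_{i=0}^n a^i b_{n-i}$; the isolated term $b_n$ supplies the $i=0$ summand. The final bound follows by replacing each $b_{n-i}$ by $\sup_n|b_n|$ and factoring out $\sum_{i=0}^n a^i$. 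The statement is elementary, so I do not anticipate a genuine obstacle; the only points needing a word of care are the degenerate case $a=0$ — where the convention $0^0=1$ keeps the $i=0$ term equal to $b_n$ and $\sigma(0,t)=t$ — and noting that no mollification is required in part 1 because $y$ is already assumed differentiable.
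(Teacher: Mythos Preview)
Your proof is correct and is exactly the standard argument; the paper itself states this proposition without proof, treating it as elementary background, so there is nothing to compare against.
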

\begin{REM}\label{rmk-dg}
	We will apply Gr\"onwall's inequalities with $a = \nm{R_s(\tau L_h)}$ under the assumption $\nm{R(\tau L_h)} \leq 1 + \tmu  \tau$ in the error analysis. By using the fact $(1+y)^{1/y} \leq e$, $\forall y>0$, it can be shown that $\nm{R_s(\tau L_h)}^{n+1}  \leq e^{\tmu t^{n+1}}$ and $
	\sum_{i=0}^n\nm{R_s(\tau L_h)}^i \leq  \sigma(\tmu, t^{n+1}) \tau^{-1}$.
\end{REM}

Since the error estimates rely on the Lax--Wendroff procedure, in that we convert the temporal operator $\partial_t$ into the spatial operator $L$, we need to assume the exact solution has sufficient regularity to justify this conversion. 
\begin{ASSP}[Regularity of $u$]\label{assp-rg}
	$u$ is sufficiently smooth, such that 
	$\{\partial_t^i u\}_{i=1}^{p+1}$, $\{L^i u\}_{i=1}^{p+1}$ and $\{\Pi\partial_t^{i} u(\cdot,t)\}_{i=0}^{p+1}$ are well-defined and bounded in $L^\infty$ norm. Moreover,  $\partial_t^i u = L^i u$, $\forall 1\leq i \leq p+1$.
\end{ASSP}
\begin{REM}
Note the regularity assumption is independent of the stage number $s$. In many cases, it can be satisfied with $u \in C^{q(p+1)}(\Omega\times (0,+\infty);\mathbb{R}^m)$.
\end{REM}

Stability of the scheme, which bounds the error growth in time, plays a crucial role in the error analysis. Here we assume the semiboundedness of $L_h$ in Assumption \ref{assp-Lh}, and stability of the semidiscrete scheme follows as a consequence. Stability of the fully discrete scheme is also based on Assumption \ref{assp-Lh}, with additional time step constraints. For clarity, we separately state the assumption on fully discrete stability in Assumption \ref{assp-RK} and comment on its connection with Assumption \ref{assp-Lh} in Remark \ref{rem-timestep}.  

\begin{ASSP}[Semiboundedness of $L_h$]\label{assp-Lh}
	There exists a constant $\mu \geq 0$, such that 
	\begin{equation}
	\ip{L_h v_h,v_h} \leq {\mu} \nm{v_h}^2, \qquad \forall v_h \in V_h.
	\end{equation}
\end{ASSP}
\begin{PROP}[Semidiscrete stability]\label{prop-edecay}
	Under Assumption \ref{assp-Lh}, we have
	$\nm{u_h(t)} \leq e^{\mu t} \nm{u_h(0)}$.
\end{PROP}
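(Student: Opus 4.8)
The plan is to run the standard energy argument at the semidiscrete level and then invoke the differential form of Gr\"onwall's inequality from Proposition~\ref{prop-gronwall}. First I would set $y(t) = \nm{u_h(t)}^2$. Since $u_h$ solves the linear finite-dimensional autonomous system \eqref{eq-semi}, the map $t \mapsto u_h(t)$ is smooth, so $y$ is differentiable with $\frac{d}{dt} y(t) = 2\ip{\partial_t u_h(t), u_h(t)}$. Working with the squared norm (rather than $\nm{u_h(t)}$ itself) keeps differentiability unproblematic even at times where $u_h(t)$ might vanish.

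Next I would test \eqref{eq-semi} against $u_h$: $\ip{\partial_t u_h, u_h} = \ip{L_h u_h, u_h}$, so that by Assumption~\ref{assp-Lh} the right-hand side is bounded by $\mu \nm{u_h}^2 = \mu\, y(t)$. This yields the differential inequality $\frac{d}{dt} y(t) \leq 2\mu\, y(t)$.

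Finally I would apply the first Gr\"onwall inequality \eqref{eq-gronwall} with $a = 2\mu \geq 0$ and $b \equiv 0$, which gives $y(t) \leq e^{2\mu t} y(0)$, i.e. $\nm{u_h(t)}^2 \leq e^{2\mu t}\nm{u_h(0)}^2$; taking square roots produces the stated bound. There is essentially no obstacle here: the proof is a textbook energy estimate, and the only point meriting a remark is the reduction to the squared norm noted above. (One could alternatively observe that boundedness of $L_h$ makes the solution operator $e^{tL_h}$ well-defined and estimate its operator norm directly, but the energy computation is cleaner and is the route I would take.)
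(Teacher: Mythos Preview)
Your argument is correct. The paper states this proposition without proof, treating it as a standard consequence of Assumption~\ref{assp-Lh}; your energy computation is exactly the expected justification and matches the style of estimate the paper carries out in the proof of Theorem~\ref{thm-semi} (there the authors pass from $\frac{1}{2}\frac{d}{dt}\nm{\xi}^2$ to an inequality for $\frac{d}{dt}\nm{\xi}$ and then apply Gr\"onwall, which is the same maneuver up to whether one squares first or not).
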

\begin{ASSP}[Fully discrete stability]\label{assp-RK}
		There exists a constant $\tmu \geq 0$, such that under the time step constraint $\tau \nm{L_h} \leq \lambda$, we have
		$\nm{R_s(\tau L_h)} \leq 1+ \tmu \tau$.
		Here $\lambda$ can either be a constant or depend on the mesh size $h$.
\end{ASSP}

\begin{REM}[On the time step constraint]\label{rem-timestep}
	Using inverse estimates, we can usually show that $\nm{L_h} \lesssim h^{-q}$ for a $q$th order differential operator. When $\lambda$ is constant, the time step constraint is in the form of $\tau \lesssim \lambda h^q$, which is the practically assumed time step size. 
	
	Assume $\ip{L_h v_h,v_h} \leq \mu \nm{v_h}^2$ with $\mu\leq 0$. For a diffusive problem that leads to a coercive $L_h$, namely $\ip{L_h v_h,v_h}\leq -\eta \nm{L_h v_h}^2$ with $\eta>0$, 
then all $p$-stage $p$th order explicit RK methods are stable under the constraint $\tau \lesssim \eta$  \cite{gottlieb2001strong}. For general cases, $p$-stage $p$th order RK methods with $p = 3,7,11,\cdots$ \cite{sun2019strong}, and RK methods combining two steps of $p$-stage $p$th order RK methods, with $p = 4,8,12,\cdots$ \cite{sun2017rk4,xu2019L2,xu2019superconvergence}, 
	are stable with constant $\lambda$. Analysis in \cite{xu2019L2,xu2019superconvergence} also implies that
	all $p$-stage $p$th order explicit RK methods are stable under the time step constraint $\tau \nm{L_h}^2 \lesssim 1$, which is effectively $\lambda \lesssim h^{q}$. We refer to their original papers for a less restrictive time step estimation. One can also expect similar results for $\mu >0$. See, for example, \cite{sun2017stability}.
\end{REM}

Another ingredient for error estimates is the consistency of the scheme. Consistency of the RK time discretization can be examined with local truncation error analysis. See Assumption \ref{assp-rkacc}. The consistency of the spatial operator is defined based on the existence of a projection or interpolation operator, which is detailed in Assumption \ref{assp-hard}. 

\begin{ASSP}[Consistency of the RK method]\label{assp-rkacc}
	The RK method is $p$th order accurate, $p\geq 1$. More specifically, we assume 
	$\alpha_i = \frac{1}{i!}$, $\forall i\leq p$, and   $\alpha_{p+1} \neq \frac{1}{(p+1)!}$ in \eqref{eq-fully}.  
\end{ASSP}

\begin{ASSP}\label{assp-hard}
	There exists a linear operator $\Pi$ such that $\mathrm{Range}(\Pi) \subset V_h$ and 
	\begin{enumerate}
		\item (Approximation property of $\Pi$). 
		$
		\nm{\left(I  - \Pi\right) L^i w(t)} \leq E_{\rm{T}}(i, t,h)
		$.
		\item (Consistency of $L_h$).
		$\left|\ip{\left( L - L_h \Pi\right) L^{i-1} w(t), v_h} \right|\leq E_{\rm{S}}(i, t,h) \nm{v_h}$, $\forall v_h \in V_h$.
	\end{enumerate}
	We denote by
	$
	E_i(h) = \left\{
	\begin{matrix}
	\sup_{t} \left(E_{\rm{S}}(i,t,h) + E_{\rm{T}}(i,t,h)\right), & i \geq 1,\\
	\sup_{t} E_{\rm{T}}(i,t,h), & i =0.\\
	\end{matrix}
	\right.
	$
\end{ASSP}
\begin{REM}
	We usually refer to $\Pi$ as a projection for consistency with existing literature in the DG community. However, in this paper, we do not assume $\Pi$ is an actual projection in the mathematical sense, namely $\Pi^2 = \Pi$. In fact, $\Pi$ can be undefined on $V_h$.
\end{REM}
\begin{REM}
	As will be detailed in Theorem \ref{thm-semi}, one only needs to consider $i = 1$ in the error estimates of semidiscrete schemes. In this case, Assumption \ref{assp-hard} can be rephrased as
	\begin{align}
	\nm{\left(I  - \Pi\right) \partial_t u(t)} \leq&\; E_{\rm{T}}(1, t,h),\label{eq-hard-E-i1}\\
	\left|\ip{\left( L - L_h \Pi\right) u(t), v_h} \right|\leq &\;E_{\rm{S}}(1, t,h) \nm{v_h}, \qquad \forall v_h \in V_h.\label{eq-hard-projprop-i1}
	\end{align}	
	 \eqref{eq-hard-E-i1} is the standard approximation result of $\Pi$. The approximation of $Lu$ is concerned in \eqref{eq-hard-projprop-i1}. When $E_{\rm{S}}$ is of the same order as $E_{\rm{T}}$, \eqref{eq-hard-E-i1} is referred to as a superconvergence property. See, for example, \cite{dong2009analysis}. 
\end{REM}

\section{Error estimates}\label{sec-errest}

\subsection{Semidiscrete scheme}\label{sec-semi}

\begin{THM}[Error estimates of the semidiscrete scheme] \label{thm-semi}
	Under Assumption \ref{assp-rg} with $p = 0$, Assumption \ref{assp-Lh} and Assumption \ref{assp-hard}, the semidiscrete scheme \eqref{eq-semi} satisfies the error estimate
	\begin{equation}
			\nm{u(t) - u_h(t)} \leq  e^{\mu t} \nm{\Pi u(0)-u_h(0)} + (\sigma(\mu, t)+1)E_1(h) .
	\end{equation}
\end{THM}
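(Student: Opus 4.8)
The plan is to run the classical Galerkin energy argument. Split the error as $u(t)-u_h(t)=\eta(t)+\xi(t)$ with $\eta(t)=(I-\Pi)u(t)$ and $\xi(t)=\Pi u(t)-u_h(t)\in V_h$. The part $\eta$ is handled at once: by Assumption~\ref{assp-hard}(1) with $i=0$, $\nm{\eta(t)}=\nm{(I-\Pi)u(t)}\le E_{\rm{T}}(0,t,h)\le E_1(h)$, which is precisely the ``$+1$'' term in the claimed bound, so it remains to estimate $\nm{\xi(t)}$.

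First I would derive the evolution equation for $\xi$. Using that $\Pi$ is linear and time-independent, that $\partial_t u_h=L_hu_h$, and that $\partial_t u=Lu$ (Assumption~\ref{assp-rg} with $p=0$), differentiating in $t$ gives $\partial_t\xi=\partial_t(\Pi u)-\partial_t u_h=\Pi(Lu)-L_hu_h$; substituting $u_h=\Pi u-\xi$ yields $\partial_t\xi-L_h\xi=\Pi Lu-L_h\Pi u$. The one manipulation that matters is to rewrite the right-hand side in the form to which Assumption~\ref{assp-hard} at $i=1$ applies: for every $v_h\in V_h$, since $\xi\in V_h$ and $\Pi Lu\in V_h$,
$\ip{\Pi Lu-L_h\Pi u,\,v_h}=\ip{(L-L_h\Pi)u,\,v_h}-\ip{(I-\Pi)Lu,\,v_h}$,
where the first term on the right is the consistency residual of $L_h$ (with $L^{i-1}u=u$) and the second is the approximation residual of $\Pi$ (with $L^{i}u=Lu$).

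Then I would test this equation with $\xi$. Assumption~\ref{assp-Lh} gives $\ip{L_h\xi,\xi}\le\mu\nm{\xi}^2$, and Assumption~\ref{assp-hard} with $i=1$ together with the Cauchy--Schwarz inequality gives $\bigl|\ip{\Pi Lu-L_h\Pi u,\xi}\bigr|\le\bigl(E_{\rm{S}}(1,t,h)+E_{\rm{T}}(1,t,h)\bigr)\nm{\xi}\le E_1(h)\nm{\xi}$. Hence $\frac12\frac{d}{dt}\nm{\xi}^2\le\mu\nm{\xi}^2+E_1(h)\nm{\xi}$, which after the routine reduction (divide by $\nm{\xi}$, using $\sqrt{\nm{\xi}^2+\varepsilon}$ near zeros of $\xi$ and letting $\varepsilon\to0$) becomes $\frac{d}{dt}\nm{\xi}\le\mu\nm{\xi}+E_1(h)$. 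Proposition~\ref{prop-gronwall}(1) with $a=\mu\ge0$ and $b(t)\equiv E_1(h)$ then gives $\nm{\xi(t)}\le e^{\mu t}\nm{\xi(0)}+\sigma(\mu,t)E_1(h)$, where $\xi(0)=\Pi u(0)-u_h(0)$; adding $\nm{\eta(t)}$ and invoking the triangle inequality completes the estimate.

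For this semidiscrete bound there is no genuine obstacle: the energy test is one line and Gr\"onwall is used once. The point that needs care --- and that anticipates the fully discrete analysis --- is the splitting of the residual $\Pi Lu-L_h\Pi u$ into an approximation part and a consistency part, which is what lets the whole argument invoke Assumption~\ref{assp-hard} only at index $i=1$, and hence Assumption~\ref{assp-rg} only at $p=0$, with no dependence on the stage count of the time integrator. The remaining technicality is the standard one of differentiating $\nm{\xi(t)}$ at instants where it vanishes.
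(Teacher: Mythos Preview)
Your proof is correct and follows essentially the same approach as the paper. The paper writes the evolution equation as $\partial_t\xi=L_h\xi+\veps-\partial_t\eta$ with $\veps=(L-L_h\Pi)u$ and then bounds $\ip{\veps,\xi}$ and $\ip{\partial_t\eta,\xi}$ separately, while you write it as $\partial_t\xi-L_h\xi=\Pi Lu-L_h\Pi u$ and split the inner product afterwards; these are algebraically identical since $\veps-\partial_t\eta=\Pi Lu-L_h\Pi u$ once one uses $\partial_t u=Lu$.
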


\begin{proof}
	Subtracting \eqref{eq-semi} from \eqref{eq-cont} gives
	\begin{equation}
	\partial_t (u-u_h) = L u- L_h u_h.
	\end{equation}
	Then by adding and subtracting terms, we have
	\begin{equation}\label{eq-semisplit} 
	\partial_t \xi  =  L_h \xi + \veps   - \partial_t \eta, 
	\end{equation}
	where
	$\xi  = \Pi u - u_h$, $\veps  = \left(L - L_h \Pi \right) u$ and $\eta  = \left(I - \Pi\right) u$.
	Take inner product of \eqref{eq-semisplit} with $\xi$, and it yields that
	\begin{equation}
	\begin{aligned}
	\hf \frac{d}{dt}\nm{\xi}^2 
	= \ip{L _h\xi , \xi }+ \ip{ \veps ,\xi} - \ip{ \partial_t \eta,\xi}.
	\end{aligned}
	\end{equation}
	Note we have $\ip{L _h\xi , \xi }\leq \mu \nm{\xi}^2$ from Assumption \ref{assp-Lh}, 
	$\ip{\veps , \xi } \leq E_{\rm{S}}(1,t,h)\nm{\xi}$ from Assumption \ref{assp-hard}, and  
	\begin{equation}
	\ip{\partial_t \eta, \xi} = \ip{ (I-\Pi) \partial_tu, \xi }= \ip{ (I-\Pi) Lu, \xi }\leq E_{\rm{T}}(1,t,h)\nm{\xi}
	\end{equation}
	from Assumption \ref{assp-rg} and Assumption \ref{assp-hard}. Therefore, 
	\begin{equation}
	\hf \frac{d}{dt}\nm{\xi}^2 \leq  \left(\mu \nm{\xi} + E_1(h) \right) \nm{\xi},
	\end{equation}
	which implies 
	$\frac{d}{dt}\nm{\xi} \leq \mu \nm{\xi} + E_1(h)$.
	One can then use Gr\"onwall's inequality \eqref{eq-gronwall} to obtain
	\begin{equation}\label{eq-semi-xi}
	\nm{\xi(t)}	\leq e^{\mu t} \nm{\xi(0)} + \sigma(\mu, t) E_1(h). 
	\end{equation}
	Finally, after applying triangle inequality, we have
	\begin{equation}
	\nm{u(t) - u_h(t)} \leq  \nm{\eta(t)} + \nm{\xi(t)} \leq e^{\mu t} \nm{\xi(0)} + \left(\sigma(\mu,t) +1\right)E_1(h). 
	\end{equation}
\end{proof}

\subsection{Fully discrete scheme}\label{sec-fully}

\subsubsection{Main results}
 
\begin{THM}[Error estimates of the fully discrete scheme] \label{thm-main-hard}
	Under Assumption \ref{assp-rg},  Assumption \ref{assp-RK}, Assumption \ref{assp-rkacc} and Assumption \ref{assp-hard}, the fully discrete scheme \eqref{eq-fully} satisfies the error estimate 
	\begin{equation}
	\nm{u(t^n) - u_h^n}\leq e^{\tmu t^n}  \nm{ \Pi u(0) - u_h^0  }+E_0(h)+ C_\star\sigma(\tmu, t^{n})\left(\sum_{i =1}^{p+1} \tau^{i-1}E_i(h) +  \sup_{t} \nm{\partial_t^{p+1} u}\tau^p\right) .
	\end{equation}
	Here $\sigma(\cdot,\cdot)$ is defined in Proposition \ref{prop-gronwall} and $C_\star = 2\sum_{i=0}^{s}|\alpha_i|$. 
\end{THM}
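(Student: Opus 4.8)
The plan is to mimic the semidiscrete proof but at the discrete level, using the Lax--Wendroff (Taylor-in-time) idea to produce a suitable reference solution. First I would set $\xi^n = \Pi u(t^n) - u_h^n$ and $\eta^n = (I-\Pi)u(t^n)$, so that by the triangle inequality it suffices to bound $\nm{\xi^n}$, the contribution of $\eta^n$ being absorbed into $E_0(h)$. Subtracting the scheme \eqref{eq-fully} from the relation satisfied by the reference solution, I would derive a recursion of the form $\xi^{n+1} = R_s(\tau L_h)\xi^n + \rho^n$, where $\rho^n$ collects all consistency defects. The key device — and this is where the paper's remark about a ``carefully chosen reference solution'' \eqref{eq-ur} comes in — is \emph{not} to compare $u_h^{n+1}$ directly with $\Pi u(t^{n+1})$ via $R_s(\tau L_h)\Pi u(t^n)$, but to introduce an intermediate quantity $\ur$ obtained by applying the truncated Taylor polynomial $\sum_{i=0}^{p}\frac{1}{i!}\tau^i (L_h\Pi)^i u(t^n)$ (or the analogous expression) so that the stage-number $s$ does not appear in the required regularity; the excess terms $\alpha_i$ for $p+1\le i\le s$ are handled using $\tau\nm{L_h}\le\lambda<1$ together with Assumption \ref{assp-hard} applied at indices $i$.

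Concretely, I would write $\rho^n$ as a sum of three pieces: (i) the temporal truncation error of the RK method applied to the exact solution, which by Assumption \ref{assp-rkacc} (coefficients match $1/i!$ up to order $p$) and Assumption \ref{assp-rg} (so $\partial_t^i u = L^i u$) is of size $O(\tau^{p+1}\sup_t\nm{\partial_t^{p+1}u})$; (ii) the spatial consistency defects that arise when each factor of $L$ is replaced by $L_h\Pi$, telescoped one factor at a time so that each swap contributes a term controlled by $E_{\rm S}(i,t,h)$ after pulling out powers of $\tau$ and a bounded power of $\tau L_h$ (using $\tau\nm{L_h}\le\lambda<1$); and (iii) the projection error terms $(I-\Pi)$ acting on the various $L^i u$, controlled by $E_{\rm T}(i,t,h)$. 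Collecting, one gets $\nm{\rho^n} \le C_\star\big(\sum_{i=1}^{p+1}\tau^{i}E_i(h) + \tau^{p+1}\sup_t\nm{\partial_t^{p+1}u}\big)$ with $C_\star = 2\sum_{i=0}^s|\alpha_i|$, the factor $2$ and the sum over $|\alpha_i|$ appearing precisely because of the bookkeeping in (ii)--(iii) over all $s+1$ terms of $R_s$.

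Finally I would apply the discrete Gr\"onwall inequality \eqref{eq-dgronwall} with $a = \nm{R_s(\tau L_h)}$ and $b_n = \nm{\rho^n}$. By Assumption \ref{assp-RK} we have $a\le 1+\tmu\tau$, and by Remark \ref{rmk-dg} this gives $a^{n}\le e^{\tmu t^n}$ and $\sum_{i=0}^{n-1}a^i \le \sigma(\tmu,t^n)\tau^{-1}$. Hence $\nm{\xi^n} \le e^{\tmu t^n}\nm{\xi^0} + \sigma(\tmu,t^n)\tau^{-1}\sup_n\nm{\rho^n}$; the extra $\tau^{-1}$ cancels one power of $\tau$ in $\nm{\rho^n}$, yielding the stated $\sum_{i=1}^{p+1}\tau^{i-1}E_i(h) + \tau^p\sup_t\nm{\partial_t^{p+1}u}$. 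Adding back $\nm{\eta^n}\le E_0(h)$ via the triangle inequality completes the estimate. I expect the main obstacle to be step (ii): organizing the telescoping replacement of $L$ by $L_h\Pi$ across an $i$-fold product while keeping every intermediate factor of the form $(\tau L_h)^j$ uniformly bounded and making sure the regularity demanded never exceeds $p+1$ derivatives — this is exactly the point of the special reference solution $\ur$, and getting the constant to come out as $2\sum_{i=0}^s|\alpha_i|$ requires care.
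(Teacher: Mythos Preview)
Your overall plan matches the paper's proof: split $u-u_h$ into $\xi^n=\Pi u(t^n)-u_h^n$ and $\eta^n$, derive a one-step recursion $\xi^{n+1}=R_s(\tau L_h)\xi^n+\rho^n$ via a reference solution $\ur$, bound $\rho^n$ by telescoping the swap $L\to L_h\Pi$, and close with the discrete Gr\"onwall inequality and Remark~\ref{rmk-dg}.

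The one correction needed is the form of $\ur$. Your suggestion $\sum_{i=0}^{p}\tfrac{1}{i!}\tau^i(L_h\Pi)^i u(t^n)$ requires $\Pi$ to act on elements of $V_h$ (since $L_h\Pi u(t^n)\in V_h$), and the paper explicitly does \emph{not} assume this --- $\Pi$ may be undefined on $V_h$ (see the remark after Assumption~\ref{assp-hard}). The paper instead takes
\[
\ur=\Pi\Bigl(\sum_{i=0}^{p}\alpha_i(\tau L)^i u(t^n)\Bigr)+\sum_{i=p+1}^{s}\alpha_i\tau^i L_h^{\,i-(p+1)}\Pi L^{p+1}u(t^n),
\]
so that for $i\le p$ one iterates $L$ on the smooth function and projects only once, while for $i\ge p+1$ the excess factors are discrete powers $L_h^{\,i-(p+1)}$ acting on $\Pi L^{p+1}u(t^n)\in V_h$. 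With this choice the telescoping unit is $\zeta_i^n=(\Pi L-L_h\Pi)L^{i-1}u(t^n)$ for $1\le i\le p+1$ and $\zeta_i^n=0$ for $i\ge p+2$ (Lemma~\ref{lem-Lxi}); each $\zeta_i^n$ has $\Pi$ acting only on the smooth functions $L^{i-1}u$ and $L^i u$, and the vanishing for $i\ge p+2$ is exactly what caps the required regularity at $p{+}1$ and makes the constant come out as $2\sum_{i=0}^s|\alpha_i|$ (Lemma~\ref{lem-xipirho}). Apart from this refinement to $\ur$, your steps (i)--(iii) and the Gr\"onwall conclusion are the paper's argument.
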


As a consequence, we have the following error estimate when $u$ is sufficiently smooth.
\begin{COR}\label{cor-err}
	Let $k'\in[0,1]$ and $k$ be parameters associated with $V_h$ and $L_h$. Suppose the following assumptions hold. 
	\begin{enumerate}
		
		\item There exists $\lambda>0$ such that 
		$\nm{R(\tau L_h)}\leq 1+ \tmu \tau$ for all $\tau \leq \lambda h^q$.
		\item There exists a linear operator $\Pi$ such that for sufficiently smooth $w$, we have
		\begin{align}
		\nm{w - \Pi w}\leq &\; C_{\rm{T}}h^{k+1},\label{eq-CT}\\
		\ip{(L  - L_h\Pi) w, v_h }\leq&\; C_{\rm{S}} h^{k+k'}\nm{v_h},\label{eq-CS}
		\end{align}
		where $C_{\rm{S}}$ and $C_{\rm{T}}$ are constants dependent on the Sobolev norm of $w$. 
	\end{enumerate}
	Then when $u$ is sufficiently smooth, we have
	\begin{equation}\label{cor-main-hard}
	\nm{u(t^n) - u_h^n}\leq e^{\tmu t^n}  \nm{ \Pi u(0) - u_h^0  }+C_{\rm{E}}(\sigma(\tmu, t^{n})+1)\left(h^{k+k'} + \tau^p\right),
	\end{equation}
	where $C_{\rm{E}}$ is a constant dependent on $\{\alpha_i\}_{i=0}^s$ and the Sobolev norm of $u$. 
	
\end{COR}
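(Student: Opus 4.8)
The plan is to derive Corollary \ref{cor-err} as a direct specialization of Theorem \ref{thm-main-hard}. The strategy is to show that the hypotheses of the corollary imply the hypotheses of the theorem, with explicit bounds on the error terms $E_i(h)$ and on the extra temporal term, and then to collect everything into a single constant $C_{\rm{E}}$.

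First I would verify that the assumptions of Theorem \ref{thm-main-hard} are in force. Assumption \ref{assp-rg} on the regularity of $u$ is subsumed by the phrase ``$u$ is sufficiently smooth'' in the corollary; Assumption \ref{assp-rkacc} is the standing assumption that the RK method has linear order $p$; Assumption \ref{assp-RK} follows from hypothesis (1) of the corollary, since we are working under the practical time step $\tau \leq \lambda h^q$ and, by Remark \ref{rem-timestep}, the inverse estimate $\nm{L_h} \lesssim h^{-q}$ then gives $\tau \nm{L_h} \leq \lambda$ for a suitably adjusted $\lambda$, so that $\nm{R_s(\tau L_h)} \leq 1 + \tmu \tau$. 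Finally, Assumption \ref{assp-hard} is produced from hypothesis (2): applying \eqref{eq-CT} to $w = L^i u(t)$ and \eqref{eq-CS} to $w = L^{i-1} u(t)$ — both of which are sufficiently smooth by the regularity of $u$ and the fact that $L$ has smooth coefficients — yields $E_{\rm{T}}(i,t,h) \leq C_{\rm{T}}^{(i)} h^{k+1}$ and $E_{\rm{S}}(i,t,h) \leq C_{\rm{S}}^{(i)} h^{k+k'}$, where the constants depend on Sobolev norms of $u$. Since $k' \in [0,1]$ we have $h^{k+1} \leq h^{k+k'}$ (recall $h \leq 1$ in the practical regime, or this can be absorbed into the constant on any bounded mesh family), so that
\begin{equation}
E_i(h) \leq C^{(i)} h^{k+k'}, \qquad i \geq 0,
\end{equation}
for constants $C^{(i)}$ depending only on the Sobolev norms of $u$.

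Next I would feed these bounds into the conclusion of Theorem \ref{thm-main-hard}. The first two terms on the right-hand side, $e^{\tmu t^n} \nm{\Pi u(0) - u_h^0}$ and $E_0(h) \leq C^{(0)} h^{k+k'}$, are handled directly; the latter is absorbed into the $h^{k+k'}$ term of \eqref{cor-main-hard}. For the sum $\sum_{i=1}^{p+1} \tau^{i-1} E_i(h)$, using $\tau \leq 1$ and the bound above gives $\sum_{i=1}^{p+1} \tau^{i-1} E_i(h) \leq \big(\sum_{i=1}^{p+1} C^{(i)}\big) h^{k+k'}$. The remaining term $\sup_t \nm{\partial_t^{p+1} u}\, \tau^p$ is already of the desired form and bounded by a Sobolev norm of $u$. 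Multiplying by $C_\star \sigma(\tmu, t^n)$ and combining, with $C_{\rm{E}}$ chosen to dominate $C_\star$ times the sum of all these constants (and large enough to absorb the $E_0(h)$ term against the $(\sigma(\tmu,t^n)+1)$ prefactor), produces precisely \eqref{cor-main-hard}.

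I do not expect a genuine obstacle here — the corollary is a bookkeeping consequence of the theorem. The one point requiring a little care is the reconciliation of the time step constraints: Assumption \ref{assp-RK} is phrased in terms of $\tau \nm{L_h} \leq \lambda$, whereas hypothesis (1) of the corollary is phrased as $\tau \leq \lambda h^q$; bridging these cleanly relies on the inverse inequality $\nm{L_h} \lesssim h^{-q}$ as noted in Remark \ref{rem-timestep}, and one should be explicit that the two $\lambda$'s may differ by the implied constant. A second minor point is ensuring that the passage from $h^{k+1}$ to $h^{k+k'}$ (when $k' < 1$) is legitimate; this is immediate on any mesh family with $h$ bounded above, and in any case only worsens constants, not the rate.
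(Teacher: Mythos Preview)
Your proposal is correct and matches the paper's own treatment: the paper presents Corollary~\ref{cor-err} simply ``as a consequence'' of Theorem~\ref{thm-main-hard} without a written proof, and your argument is exactly the natural verification that hypotheses (1) and (2) instantiate Assumptions~\ref{assp-RK} and~\ref{assp-hard} with $E_i(h)\lesssim h^{k+k'}$, after which the bound follows by collecting constants. The two minor caveats you flag (reconciling the $\tau\nm{L_h}\leq\lambda$ and $\tau\leq\lambda h^q$ forms via the inverse estimate, and absorbing $h^{k+1}$ into $h^{k+k'}$) are indeed the only points needing comment, and you handle them appropriately.
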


\subsubsection{Proof of Theorem \ref{thm-main-hard}}

The main step for proving Theorem \ref{thm-main-hard} is to track the discrete error
\begin{equation}\label{eq-xi0}
	\xi_0^{n+1} = \Pi u (t^{n+1}) - u_h^{n+1}.
\end{equation}
To avoid extra regularity assumptions, we modify the reference $u^{n+1}_\star = \sum_{i=0}^{s}\alpha_i (\tau L)^i u(t^n)$ as
\begin{equation}\label{eq-ur}
\ur = \Pi \left(\sum_{i = 0}^p \alpha_i(\tau L)^i u(t^n)\right)+ \sum_{i = p+1}^s \alpha_i\tau^i L_h ^{i-(p+1)}\Pi L^{p+1} u(t^n).
\end{equation}
Then the discrete error $\xi_0^{n+1}$ admits the splitting 
\begin{equation}\label{eq-split}
	\xi_0^{n+1} =\left(\Pi u(t^{n+1}) - \ur\right) + \left(\ur - u_h^{n+1}\right):= \rho_h^{n+1} + \omega_h^{n+1}.
\end{equation}
By expanding $\ur$ with \eqref{eq-ur} and $u_h^{n+1}$ with \eqref{eq-fully}, we have 
\begin{equation}
\omega_h^{n+1} =  \sum_{i = 0}^p\alpha_i\tau^i (\Pi L^i u(t^n)- L_h^i u_h^n) + \sum_{i = p+1}^s\alpha_i \tau^i \left(L_h^{i-(p+1)}\Pi L^{p+1} u(t^n) - L_h^{i} u_h^n\right),
\end{equation}
which motivates us to define the following quantity
\begin{equation}\label{eq-xi}
\xi_i^n = 
\left\{
\begin{matrix}
\Pi L^i u(t^n)- L_h^i u_h^n, &0\leq i\leq {p+1},\\
L_h^{i-(p+1)}\Pi L^{p+1} u(t^n)- L_h^i u_h^n, &p+1\leq i\leq s.\\
\end{matrix}\right.
\end{equation}
Note the notation is consistent with that of $\xi_0^{n+1}$ in \eqref{eq-xi0}. Then \eqref{eq-split} can be written as
\begin{equation}\label{eq-xi-xii}
	\xi_0^{n+1} =\rho_h^{n+1} + \sum_{i=0}^{s}\alpha_i\tau^i \xi_i^n.
\end{equation}
We now need to simplify $\xi_i^n$ in terms of $\xi_0^n$, which calls for the following definition.
\begin{equation}\label{eq-zeta}
\zeta_i^n = 
	 	\left\{ 
	 	\begin{matrix}
	 	\left(\Pi L -L_h \Pi \right) L^{i-1} u(t^n),& 1\leq i \leq p+1, \\
	 	0, &p+2\leq i\leq s.
	 	\end{matrix}\right. \\
\end{equation}

\begin{LEM}\label{lem-Lxi}
	With $\xi_i^n$ defined in \eqref{eq-xi} and $\zeta_i^n$ defined in \eqref{eq-zeta}, we have
	\begin{equation}\label{eq-Lxi-1}
		\xi_i^n = L_h\xi_{i-1}^n + \zeta_{i}^n, \qquad \forall 1\leq i\leq s. 
	\end{equation}
	As a consequence, 
	\begin{equation}\label{eq-Lxi-ip1}
		\xi_i^n = L_h^i \xi_0^n + \sum_{j=1}^i L_h^{i-j}\zeta_j^n, \qquad \forall 0 \leq i \leq s.
	\end{equation}
\end{LEM}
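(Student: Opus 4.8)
The plan is to prove \eqref{eq-Lxi-1} directly from the definitions \eqref{eq-xi} and \eqref{eq-zeta}, treating the three ranges of the index $i$ separately, and then obtain \eqref{eq-Lxi-ip1} by a straightforward induction on $i$.

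\textbf{Step 1: the range $1 \le i \le p+1$.} Here both $\xi_i^n$ and $\xi_{i-1}^n$ are given by the first branch of \eqref{eq-xi}, so I compute
\[
\xi_i^n - L_h \xi_{i-1}^n = \bigl(\Pi L^i u(t^n) - L_h^i u_h^n\bigr) - L_h\bigl(\Pi L^{i-1} u(t^n) - L_h^{i-1} u_h^n\bigr) = \Pi L^i u(t^n) - L_h \Pi L^{i-1} u(t^n),
\]
since the $L_h^i u_h^n$ terms cancel. Writing $L^i u(t^n) = L\bigl(L^{i-1} u(t^n)\bigr)$, the right-hand side is exactly $\bigl(\Pi L - L_h \Pi\bigr) L^{i-1} u(t^n) = \zeta_i^n$, which is \eqref{eq-Lxi-1} in this range.

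\textbf{Step 2: the range $p+2 \le i \le s$.} Now both $\xi_i^n$ and $\xi_{i-1}^n$ come from the second branch of \eqref{eq-xi} (note $i-1 \ge p+1$, so $\xi_{i-1}^n$ uses the second branch, and when $i-1 = p+1$ the two branches agree). Then
\[
\xi_i^n - L_h \xi_{i-1}^n = \bigl(L_h^{i-(p+1)}\Pi L^{p+1} u(t^n) - L_h^i u_h^n\bigr) - L_h\bigl(L_h^{i-1-(p+1)}\Pi L^{p+1} u(t^n) - L_h^{i-1} u_h^n\bigr) = 0,
\]
which matches $\zeta_i^n = 0$ in this range. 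One must only double-check the boundary index $i = p+2$ (so $i-1 = p+1$): since $\xi_{p+1}^n = \Pi L^{p+1} u(t^n) - L_h^{p+1} u_h^n = L_h^{0}\Pi L^{p+1} u(t^n) - L_h^{p+1} u_h^n$, the two representations of $\xi_{p+1}^n$ coincide, so the computation above is valid there too. This completes \eqref{eq-Lxi-1} for all $1 \le i \le s$.

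\textbf{Step 3: the consequence \eqref{eq-Lxi-ip1}.} This follows by induction on $i$. The base case $i = 0$ is the empty-sum identity $\xi_0^n = \xi_0^n$. Assuming $\xi_{i-1}^n = L_h^{i-1}\xi_0^n + \sum_{j=1}^{i-1} L_h^{i-1-j}\zeta_j^n$, apply $L_h$ and add $\zeta_i^n$; by \eqref{eq-Lxi-1} the left-hand side becomes $\xi_i^n$, and the right-hand side becomes $L_h^i \xi_0^n + \sum_{j=1}^{i-1} L_h^{i-j}\zeta_j^n + \zeta_i^n = L_h^i \xi_0^n + \sum_{j=1}^{i} L_h^{i-j}\zeta_j^n$, completing the induction.

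I do not anticipate a serious obstacle here; the only place requiring care is the bookkeeping at the index $i = p+1$ and $i = p+2$, where the definition \eqref{eq-xi} switches branches, so I would make sure the two branch formulas genuinely agree at $i = p+1$ before splitting into cases. Everything else is cancellation of the $u_h^n$ terms and a one-line induction.
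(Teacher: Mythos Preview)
Your proof is correct and follows essentially the same route as the paper: split into the ranges $1\le i\le p+1$ and $p+2\le i\le s$, compute $\xi_i^n - L_h\xi_{i-1}^n$ directly from the definitions to obtain $\zeta_i^n$, and then unwind the one-step recursion by induction. Your treatment is slightly more explicit about the boundary index $i=p+1$ and the induction, but the argument is the same.
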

\begin{proof}
	Using the definition of $\xi_i^n$ in \eqref{eq-xi} and the fact $\zeta_i^n = 0$, $\forall i\geq p+2$, we have 
	\[\xi_i^n = L_h\xi_{i-1}^n = L_h\xi_{i-1}^n + \zeta_{i}^n, \qquad \forall i \geq p+2.\]
	Meanwhile, for $1\leq i\leq p+1$, it can be shown that
	\begin{equation}
	\begin{aligned}
		{\xi_{i}^n}=  \Pi{ L^{i} u(t^n)- L_h^{i} u_h^n} 
		 = &\; \Pi L^{i} u(t^n) - L_h\Pi L^{i-1} u(t^n) + L_h\Pi{ L^{i-1} u(t^n)- L_h^{i} u_h^n} \\
		 = &\; \left(\Pi L - L_h\Pi\right) L^{i-1} u(t^n) + L_h\left(\Pi L^{i-1} u(t^n)- L_h^{i-1}u_h^n \right) \\
		 = &\; \zeta_{i}^n  + L_h \xi_{i-1}^n.
	\end{aligned}
	\end{equation}
	\eqref{eq-Lxi-ip1} can be obtained after repeated application of \eqref{eq-Lxi-1}.
\end{proof}
Substitute \eqref{eq-Lxi-ip1} into \eqref{eq-xi-xii} and it yields that
\begin{equation}
\xi_0^{n+1} = R_s(\tau L_h)\xi_0^n +\left( \sum_{i=0}^s \alpha_i\sum_{j=1}^i \tau^{i} L_h^{i-j}\zeta_j^n\right) + \rho_h^{n+1}.
\end{equation}
Then with the triangle inequality and the fact $\tau^i \nm{ L_h^{i-j}}\leq \tau^i \nm{L_h}^{i-j}\leq \tau^j \lambda^{i-j} <\tau^j$, we have
\begin{equation}
\nm{\xi_0^{n+1}} \leq  \nm{R_s(\tau L_h)}\nm{\xi_0^n} + \left(\sum_{i=0}^s |\alpha_i|\sum_{j=1}^i \tau^{j} \nm{\zeta_j^n}\right) + \nm{\rho_h^{n+1}}.
\end{equation}
By invoking the discrete Gr\"onwall's inequality \eqref{eq-dgronwall} and using the assumption $\nm{R_s(\tau L_h)}\leq 1+\tmu h$ with Remark \ref{rmk-dg}, one can obtain that
\begin{equation}\label{eq-xi0b}
\begin{aligned}
	\nm{\xi_0^{n+1}} \leq &\;\nm{R_s(\tau L_h)}^{n+1}\nm{\xi_0^0} + \left(\sum_{i=0}^n\nm{R_s(\tau L_h)}^i \right)\max_n\left(\left(\sum_{i=0}^s |\alpha_i|\right)\left(\sum_{j=1}^s \tau^{j} \nm{\zeta_j^n}\right) + \nm{\rho_h^{n+1}}\right)\\
	\leq &\;e^{\tmu t^{n+1}}\nm{\xi_0^0} + \sigma(\tmu,t^{n+1})\tau^{-1}\max_n\left(\left(\sum_{i=0}^s |\alpha_i|\right)\left(\sum_{j=1}^{p+1} \tau^{j} \nm{\zeta_j^n}\right) + \nm{\rho_h^{n+1}}\right).
\end{aligned}
\end{equation}
Here we have used the fact $\zeta_j^n = 0$, $\forall j \geq p+2$. 

\begin{LEM}\label{lem-xipirho}
	\[\nm{\zeta_i^n} \leq E_i(h), \qquad \forall 1\leq i\leq p+1.\]
	\[\nm{\rho_h^{n+1}} \leq \left(\sum_{i=0}^s|\alpha_i|\right)\left(\sup_{t} \nm{\partial_{t}^{p+1} u} + E_{p+1}(h)\right)\tau^{p+1}.\]
\end{LEM}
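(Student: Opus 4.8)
The plan is to establish the two estimates separately, in each case reducing everything to the approximation and consistency bounds of Assumption~\ref{assp-hard} and the regularity in Assumption~\ref{assp-rg}, and in particular without ever invoking an inverse inequality on $L_h$.

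\emph{The bound on $\nm{\zeta_i^n}$.} The first observation is that $\zeta_i^n\in V_h$, since $\Pi L^i u(t^n)\in\mathrm{Range}(\Pi)\subset V_h$ and $L_h\Pi L^{i-1}u(t^n)\in V_h$; this lets me test $\zeta_i^n$ against itself. The algebraic step is to add and subtract $L^i u(t^n)$,
\begin{equation}
\zeta_i^n=(\Pi L-L_h\Pi)L^{i-1}u(t^n)=-(I-\Pi)L^i u(t^n)+(L-L_h\Pi)L^{i-1}u(t^n),
\end{equation}
after which $\nm{\zeta_i^n}^2=\ip{\zeta_i^n,\zeta_i^n}$ is handled term by term: the first summand by Cauchy--Schwarz with the approximation property $\nm{(I-\Pi)L^i u(t^n)}\le E_{\rm{T}}(i,t^n,h)$, and the second directly by the consistency estimate (which applies because $\zeta_i^n\in V_h$), namely $|\ip{(L-L_h\Pi)L^{i-1}u(t^n),\zeta_i^n}|\le E_{\rm{S}}(i,t^n,h)\nm{\zeta_i^n}$. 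Dividing by $\nm{\zeta_i^n}$ and recalling the definition of $E_i(h)$ gives $\nm{\zeta_i^n}\le E_{\rm{T}}(i,t^n,h)+E_{\rm{S}}(i,t^n,h)\le E_i(h)$ for $1\le i\le p+1$.

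\emph{The bound on $\nm{\rho_h^{n+1}}$.} I would expand $\rho_h^{n+1}=\Pi u(t^{n+1})-\ur$ via \eqref{eq-ur} and the RK consistency $\alpha_i=1/i!$ for $i\le p$ (Assumption~\ref{assp-rkacc}), splitting it as
\begin{equation}
\rho_h^{n+1}=\left(\Pi u(t^{n+1})-\sum_{i=0}^p\frac{\tau^i}{i!}\Pi L^i u(t^n)\right)-\sum_{i=p+1}^s\alpha_i\tau^i L_h^{i-(p+1)}\Pi L^{p+1}u(t^n).
\end{equation}
For the bracketed term, setting $g(t):=\Pi u(t)$ --- a $V_h$-valued function with $g^{(j)}(t)=\Pi L^j u(t)$ by linearity of $\Pi$ and Assumption~\ref{assp-rg} --- Taylor's formula with integral remainder identifies it with $\frac{1}{p!}\int_{t^n}^{t^{n+1}}(t^{n+1}-s)^p\Pi L^{p+1}u(s)\,ds$, whose norm is at most $\frac{\tau^{p+1}}{(p+1)!}\sup_t\nm{\Pi L^{p+1}u}$. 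For the tail sum I would use the triangle inequality together with the inverse-estimate-free bound $\tau^i\nm{L_h}^{i-(p+1)}=\tau^{p+1}(\tau\nm{L_h})^{i-(p+1)}\le\tau^{p+1}$, which holds since $\tau\nm{L_h}\le\lambda<1$. In both places $\nm{\Pi L^{p+1}u(s)}$ is controlled by $\nm{\Pi L^{p+1}u}\le\nm{(I-\Pi)L^{p+1}u}+\nm{L^{p+1}u}\le E_{p+1}(h)+\sup_t\nm{\partial_t^{p+1}u}$, using $\partial_t^{p+1}u=L^{p+1}u$. Collecting the terms produces the prefactor $\frac{1}{(p+1)!}+\sum_{i=p+1}^s|\alpha_i|$, which I absorb into $\sum_{i=0}^s|\alpha_i|$ using $\frac{1}{(p+1)!}\le1=|\alpha_0|$ and the fact that $0\notin\{p+1,\dots,s\}$.

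The only genuinely delicate point --- the main obstacle, such as it is --- is the bookkeeping for $\rho_h^{n+1}$: one must check that the modified reference $\ur$ of \eqref{eq-ur} has been arranged precisely so that the Taylor remainder truncates at order $p+1$ (so only $\partial_t^{p+1}u$ appears, rather than derivatives up to order $s+1$), and that each $L_h$-power in the tail $\sum_{i=p+1}^s$ is damped by a spare factor of $\tau$ rather than requiring a bound such as $\nm{L_h}\lesssim h^{-q}$. Once that structure is recognized, the rest is a routine application of Cauchy--Schwarz and Assumptions~\ref{assp-rg} and~\ref{assp-hard}.
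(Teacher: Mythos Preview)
Your proof is correct and follows essentially the same approach as the paper: the same add-and-subtract of $L^i u(t^n)$ for $\zeta_i^n$, and the same Taylor-remainder-plus-tail decomposition for $\rho_h^{n+1}$, with the identical absorption $\frac{1}{(p+1)!}+\sum_{i=p+1}^s|\alpha_i|\le\sum_{i=0}^s|\alpha_i|$. If anything, your treatment of $\zeta_i^n$ is slightly more careful --- by testing against $\zeta_i^n\in V_h$ you make explicit that the consistency bound in Assumption~\ref{assp-hard} is a weak (duality) estimate, whereas the paper writes $\nm{(L-L_h\Pi)L^{i-1}u(t^n)}$ directly; and for $\rho_h^{n+1}$ you keep $\Pi$ inside the Taylor remainder and bound $\nm{\Pi L^{p+1}u}$ at the end, while the paper splits $\Pi\rho_\star^{n+1}=\rho_\star^{n+1}-(I-\Pi)\rho_\star^{n+1}$ first --- these are equivalent rearrangements of the same estimate.
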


\begin{proof}
	 By adding and subtracting $L^i u(t^n)$ and using the triangle inequality, it yields that
	\begin{equation}
	\nm{\zeta_i^n} \leq \nm{(L-L_h \Pi)L^{i-1}u(t^n)} + \nm{(I-\Pi)L^i u(t^n)}\leq E_i(h). 
	\end{equation}
	Here we have used Assumption \ref{assp-hard} in the last inequality. To estimate $\rho_h^{n+1}$, 
	we denote by 
	\begin{equation}
	\rho_\star^{n+1} = u(t^{n+1})-\sum_{i = 0}^p\alpha_i (\tau L)^i u(t^n) \quand \delta^{n+1} = \left(\sum_{i={p+1}}^s\alpha_i(\tau L_h)^{i-(p+1)} \Pi L^{p+1}u(t^n)\right).
	\end{equation}
	Then 
	\begin{equation}\label{eq-pirho}
	\begin{aligned}
		\rho_h^{n+1} = &\; \Pi \rho_\star^{n+1} - \tau^{p+1} \delta^{n+1} = \rho_\star^{n+1} - (I-\Pi) \rho_\star^{n+1} - \tau^{p+1} \delta^{n+1}.
	\end{aligned}
	\end{equation}
	By estimating each term in \eqref{eq-pirho}, one can obtain the following inequalities.
	\begin{equation}
		\nm{\rho_{\star}^{n+1} }\leq  \frac{1}{(p+1)!}\sup_t\nm{\partial_t^{p+1}u}\tau^{p+1},
	\end{equation}
	\begin{equation}
	\begin{aligned}
	\nm{(I-\Pi) \rho_\star^{n+1}} =&\; \nm{(I-\Pi)\int_{t^n}^t\partial_t^{p+1}u(t') \frac{(t'-t^n)^{p}}{p!} dt'}\\
	= &\;\nm{\int_{t^n}^t (I-\Pi)L^{p+1}u(t') \frac{(t'-t^n)^{p}}{p!} dt'}
	\leq  E_{p+1}(h) \frac{\tau^{p+1}}{(p+1)!},
	\end{aligned}
	\end{equation}
	\begin{equation}
	\begin{aligned}
	\nm{\delta^{n+1}}
	\leq &\; \left(\sum_{i={p+1}}^s|\alpha_i|\lambda^{i-(p+1)}\right)\nm{ \Pi\partial_t^{p+1}u(t^n)}\\
	\leq&\; \left(\sum_{i={p+1}}^s|\alpha_i|\lambda^{i-(p+1)}\right)\left( \nm{\partial_t^{p+1}u(t^n)} + \nm{ (I-\Pi)\partial_t^{p+1}u(t^n)}\right)\\
	\leq&\; \left(\sum_{i={p+1}}^s|\alpha_i|\right)\left( \nm{\partial_t^{p+1}u(t^n)} +  E_{p+1}(h)\right).
	\end{aligned}
	\end{equation}
	Note $\sum_{i=0}^s|\alpha_i|\geq  \frac{1}{(p+1)!}+\sum_{i=p+1}^s|\alpha_i|$. The proof is then completed after applying the triangle inequality in \eqref{eq-pirho} and combining the above estimates.
	
\end{proof}

Applying Lemma \ref{lem-xipirho} to \eqref{eq-xi0b} leads to
\begin{equation}\label{eq-xievol}
\begin{aligned}
\nm{\xi_0^{n+1}} 
\leq &\;e^{\tmu t^{n+1}}\nm{\xi_0^0} + 2\left(\sum_{i = 0}^{s}|\alpha_i|\right) \sigma(\tmu,t^{n+1})\left(\sum_{j=1}^{p+1} \tau^{j-1} E_j(h) +  \sup_t\nm{\partial_t^{p+1} u}\tau^p\right).
\end{aligned}
\end{equation}
We then apply the triangle inequality to obtain
	\begin{equation}\label{eq-eevol}
	\begin{aligned}
\nm{u(t^{n+1}) - u_h^{n+1}} 
	\leq  \nm{(I-\Pi) u(t^0)} + \nm{\xi_0^{n+1}}\leq E_0(h) + \nm{\xi_0^{n+1}}.
	\end{aligned}
	\end{equation}
	The proof of Theorem \ref{thm-main-hard} can be completed by substituting \eqref{eq-xievol} into \eqref{eq-eevol}.

%
%

\section{Applications}\label{sec-app}
\setcounter{equation}{0}
In this section, we apply the error estimates in Section \ref{sec-errest} to various schemes. 
Only error estimates of the fully discrete schemes are considered, since most semidiscrete results can be found in the literature. We apply the following simplifications for ease of presentation.
\begin{itemize}
	\item We assume $u$ (and also $w$) to be a sufficiently smooth function satisfying the given boundary condition.
	\item Initial data is taken as $u_h^0 = \Pi u(0)$.
	\item We verify $\ip{L_h v_h, v_h}\leq \mu\nm{v_h}^2$ instead of $R(\tau L_h)\leq 1 + \mu_h \tau$. (Recall Remark \ref{rem-timestep}.)
\end{itemize}

\subsection{DG and CG schemes for the heat equation}\label{sec-heat}
For parabolic problems, one can choose $\Pi$ to be the elliptic projection for error estimates. For better illustration, let us 
consider the heat equation
\begin{equation}\label{eq-heat}
	u_t = \Delta u
\end{equation}
 with homogeneous Dirichlet boundary conditions.  The classical CG method and stable and consistent DG methods in \cite{arnold2002unified} can be used for the spatial discretization. We recover the notation of bilinear forms and the semidiscrete scheme to \eqref{eq-heat} is given as follows. Find $u_h \in V_h$, such that 
\begin{equation}\label{eq-heatsemi}
	\ip{\partial_t u_h,v_h} + B_h(u_h,v_h) = 0,\qquad  \forall v_h \in V_h.
\end{equation}
For the mentioned methods, the bilinear form $B_h(\cdot,\cdot)$ comes from a stable and consistent discretization of the Poisson equation. 
Therefore, there exists positive constants $C$ and $\nu$, such that
\begin{align}
B_h(w_h,v_h) \leq &\; C\vertiii{w_h}\vertiii{v_h}, \qquad \forall w_h,v_h \in V_h,\\
B_h(v_h,v_h) \geq &\;\nu\vertiii{v_h}^2, \qquad \qquad~ \forall v_h \in V_h.\label{eq-coer}
\end{align}
Here $\vertiii{\cdot}$ is the energy norm. It can then be shown that, for any sufficiently smooth function $w$, the steady state problem
\begin{equation}
	B_h(\Pi w,v_h) = \ip{-\Delta w,v_h} \label{eq-ellproj}
\end{equation}
has a unique solution $\Pi w$ with the error estimates
$\vertiii{w-\Pi w} \leq C\nm{w}_{k+1}h^k$.
By using a standard duality argument, one can obtain
\begin{equation}\label{eq-ellp-errest}
	\nm{w-\Pi w} \leq C\vertiii{w-\Pi w}h \leq C\nm{w}_{k+1}h^{k+1}.
\end{equation}
Note that \eqref{eq-ellproj} can be rewritten in the form of
 $L_h \Pi w = \Pio L w$.
Hence $C_{\rm{S}}\equiv 0$ in Corollary \ref{cor-main-hard} and we can set $k' = 1$ in \eqref{eq-CS}. One can also obtain \eqref{eq-CT} from 
the error estimates of the steady state problem \eqref{eq-ellp-errest}. Semiboundedness of $L_h$ is implied by the coercivity \eqref{eq-coer}. As a result, we have the following error estimate of the fully discrete scheme.
\begin{equation}
	\nm{u(t^{n})-u_h^n}\leq C_{\rm{E}}\left(\sigma(\tmu,t^n)+1\right)\left(h^{k+1} + \tau^p\right).
\end{equation}

\subsection{LDG schemes for 1D equation with high order derivatives}\label{sec-GLDG}

In this section, we consider the (local) DG discretization of the 1D scalar equation
\begin{equation}\label{eq-1deq}
	\partial_t u = \beta \partial_x^q u
\end{equation}
with the periodic boundary condition. Here $\beta$ is a constant. For wellposedness, we assume  $\beta(-1)^\gamma <0$ if $q = 2\gamma$ is even. In particular, our discussion includes the advection equation
\begin{equation}\label{eq-adv}
	\partial_t u + \partial_x u = 0,
\end{equation}
the heat equation
\begin{equation}
\partial_t u = \partial_{xx} u,
\end{equation}
and the dispersive equation
\begin{equation}\label{eq-dispersive}
\partial_t u = \partial_{xxx} u.
\end{equation}

\subsubsection{DG discretization}

We first introduce notations for the DG discretization. Consider a quasi-uniform mesh partition of the domain $\Omega = \cup_{j=1}^N I_j = \cup_{j=1}^N [x_{j-\hf},x_{j+\hf}]$. We denote by $\ip{w,v}_j = \int_{I_j} w v dx$ for $L^2$ inner product on $I_j$.  The finite element space is chosen as follows.
\begin{equation}
V_h  = \{v_h\in L^2(\Omega): v_h|_{I_j} \in P^k(I_j), \forall j\} := V_{h,1}, 
\end{equation}
where $P^k(I_j)$ is the linear space spanned by polynomials of degree no more than $k$. Since $v_h\in V_{h}$ can be discontinuous across cell interfaces, we denote by $v_h^-$ and $v_h^+$ the left and right limits correspondingly. Notations $[v_h] = v_h^+- v_h^-$ and $\{v_h\} = \hf(v_h^+ + v_h^-)$ are used to represent jumps and averages. 

The DG operator $D_{h,\theta}$ for approximating $\partial_x$ is defined through the variational form
\begin{equation}\label{eq-DG-1D}
\ip{D_{h,\theta} w_h,v_h } = -\ip{ u_h, \partial_xv_h}  - \sum_{j} \widehat{ w_h}_{j+\hf}[v_h]_{j+\hf} ,\qquad \forall w_h, v_h \in V_{h},
\end{equation}
where the numerical flux is 
\begin{equation}\label{eq-DG-1Dflux}
\widehat{w_h} =\theta w_h^- + (1-\theta)w_h^+.
\end{equation}
In particular, depending on the sign of $\beta$, the upwind and downwind fluxes can be retrieved with $\theta = 0$ and $\theta = 1$. The case $\theta = \hf$ corresponds to the central flux.
 
One can verify the following property of $D_{h,\theta}$.

\begin{PROP}[Antisymmetry]\label{prop-antisym}
$D_{h,\theta}^\top = -D_{h,1-\theta}$. 
To be more specific, we have 
\begin{equation}
\ip{D_{h,\theta}  w_h, v_h} = -\ip{w_h,D_{h,1-\theta}v_h},\qquad \forall w_h, v_h \in V_h.
\end{equation}
\end{PROP}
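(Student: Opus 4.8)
The statement to prove is Proposition~\ref{prop-antisym}, the antisymmetry relation $D_{h,\theta}^\top = -D_{h,1-\theta}$, or equivalently $\ip{D_{h,\theta} w_h, v_h} = -\ip{w_h, D_{h,1-\theta} v_h}$ for all $w_h, v_h \in V_h$.

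\medskip

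\textbf{Proof proposal.} The plan is to start from the defining variational form \eqref{eq-DG-1D} applied to $D_{h,\theta} w_h$ tested against $v_h$, and then integrate by parts cell-by-cell to move the derivative off $v_h$ and onto $w_h$, carefully tracking the interface terms. First I would write
\[
\ip{D_{h,\theta} w_h, v_h} = -\sum_j \ip{w_h, \partial_x v_h}_j - \sum_j \widehat{w_h}_{j+\hf}[v_h]_{j+\hf},
\]
where $\widehat{w_h} = \theta w_h^- + (1-\theta) w_h^+$. On each cell $I_j$, integration by parts gives $\ip{w_h, \partial_x v_h}_j = -\ip{\partial_x w_h, v_h}_j + (w_h v_h)|_{x_{j+\hf}^-} - (w_h v_h)|_{x_{j-\hf}^+}$, i.e.\ in one-sided-limit notation $(w_h^- v_h^-)_{j+\hf} - (w_h^+ v_h^+)_{j-\hf}$. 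Summing over $j$ and reindexing so that all boundary contributions are grouped at a common interface $x_{j+\hf}$, the cell-boundary terms collapse into a single sum over interfaces of the form $\sum_j \big( (w_h^- v_h^-)_{j+\hf} - (w_h^+ v_h^+)_{j+\hf} \big)$.

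\medskip

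The key algebraic step is then to combine this with the flux term $-\sum_j \widehat{w_h}_{j+\hf}[v_h]_{j+\hf}$ and simplify the total interface contribution at a generic node $x_{j+\hf}$. One collects
\[
-\Big( (w_h^- v_h^-) - (w_h^+ v_h^+) \Big) - \big(\theta w_h^- + (1-\theta) w_h^+\big)(v_h^+ - v_h^-),
\]
and the goal is to show this equals $-\,\widecheck{v_h}\,[w_h]$, where $\widecheck{v_h} = (1-\theta) v_h^- + \theta v_h^+$ is exactly the numerical flux that appears in the definition of $D_{h,1-\theta}$. This is a short identity in the four scalars $w_h^\pm, v_h^\pm$: expand both sides and match coefficients. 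Once this identity holds, reassembling the sum yields $\ip{D_{h,\theta} w_h, v_h} = \sum_j \ip{\partial_x w_h, v_h}_j + \sum_j \widecheck{v_h}_{j+\hf}[w_h]_{j+\hf}$, which by \eqref{eq-DG-1D} (with roles of test/trial function swapped and $\theta$ replaced by $1-\theta$) is precisely $-\ip{w_h, D_{h,1-\theta} v_h}$. The operator identity $D_{h,\theta}^\top = -D_{h,1-\theta}$ follows since the $L^2$ inner product on $V_h$ identifies the adjoint with the transpose of the matrix representation.

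\medskip

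\textbf{Main obstacle.} The only real point of care is bookkeeping of the interface sums: one must use periodicity to identify $x_{1/2}$ with $x_{N+1/2}$ so that every cell endpoint is matched with its neighbor and no boundary term is left over, and one must be consistent about the sign conventions for $[\cdot]$ and for which side of the interface a one-sided limit refers to. The coefficient-matching identity itself is elementary; the risk is purely an off-by-one or sign slip in the reindexing. I would double-check by testing the final identity on the extreme cases $\theta = 0$ (upwind/downwind pair) and $\theta = \hf$ (central flux, where the operator should be exactly skew-symmetric, i.e.\ $D_{h,1/2}^\top = -D_{h,1/2}$), both of which are easy to verify independently.
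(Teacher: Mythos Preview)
Your argument is correct: the cell-by-cell integration by parts followed by the interface identity $-\big((w_h^- v_h^-)-(w_h^+ v_h^+)\big)-\widehat{w_h}[v_h] = \big((1-\theta)v_h^- + \theta v_h^+\big)[w_h]$ is exactly the computation needed, and periodicity handles the endpoint matching. The paper does not actually supply a proof of this proposition---it is stated as a property the reader can verify---so your write-up is the natural fill-in of that omitted verification.
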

\begin{PROP}[Semidefiniteness]
	$\ip{D_{h,\theta}v_h,v_h} = \left(\theta-\hf\right)\sum_j [v_h]_{j+\hf}^2$.
\end{PROP}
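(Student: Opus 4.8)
The plan is a direct computation from the defining variational form \eqref{eq-DG-1D}, specialized to $w_h = v_h$, so that
\[
\ip{D_{h,\theta} v_h, v_h} = -\ip{v_h, \partial_x v_h} - \sum_j \widehat{v_h}_{j+\hf}[v_h]_{j+\hf}.
\]
For the volume term I would use the elementwise identity $v_h \partial_x v_h = \hf\partial_x(v_h^2)$ together with the fundamental theorem of calculus on each $I_j$, which produces boundary values $\hf\big((v_h^-)^2_{j+\hf} - (v_h^+)^2_{j-\hf}\big)$; summing over $j$ and reindexing the second family of terms by periodicity puts everything on a common interface, giving $-\ip{v_h,\partial_x v_h} = \hf\sum_j\big((v_h^+)^2 - (v_h^-)^2\big)_{j+\hf} = \sum_j \{v_h\}_{j+\hf}[v_h]_{j+\hf}$, where the last equality is $a^2 - b^2 = (a+b)(a-b)$ combined with the definitions of $\{\cdot\}$ and $[\cdot]$.

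Next I would rewrite the numerical flux \eqref{eq-DG-1Dflux} as $\widehat{v_h} = \theta v_h^- + (1-\theta) v_h^+ = \{v_h\} - (\theta - \hf)[v_h]$. Substituting this into the interface term gives $-\sum_j\widehat{v_h}_{j+\hf}[v_h]_{j+\hf} = -\sum_j\{v_h\}_{j+\hf}[v_h]_{j+\hf} + (\theta-\hf)\sum_j[v_h]_{j+\hf}^2$. Adding the two contributions, the $\{v_h\}[v_h]$ terms cancel and only $(\theta-\hf)\sum_j[v_h]_{j+\hf}^2$ remains, which is the asserted identity.

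An alternative route, which avoids handling the volume term at all, uses Proposition \ref{prop-antisym}. From \eqref{eq-DG-1D} the operators $D_{h,\theta}$ and $D_{h,1-\theta}$ share the same volume term, and their fluxes differ by $\widehat{w_h}_\theta - \widehat{w_h}_{1-\theta} = -(2\theta-1)[w_h]$, so $\ip{(D_{h,\theta} - D_{h,1-\theta})v_h, v_h} = (2\theta-1)\sum_j[v_h]_{j+\hf}^2$; on the other hand Proposition \ref{prop-antisym} gives $\ip{D_{h,1-\theta}v_h, v_h} = -\ip{D_{h,\theta}v_h, v_h}$, so the left-hand side equals $2\ip{D_{h,\theta}v_h, v_h}$ and the result follows upon dividing by $2$. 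I expect to present the direct computation as the main proof and mention this shortcut as a remark.

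There is no genuine difficulty here. The only point needing attention is the bookkeeping of one-sided traces in the volume term, namely that the trace of $v_h$ from $I_j$ at $x_{j+\hf}$ is $v_h^-$ while the trace from $I_j$ at $x_{j-\hf}$ is $v_h^+$, and that the periodic reindexing is what allows these to be combined at a single interface; once that is set up, the remainder is elementary algebra.
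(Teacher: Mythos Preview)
Your proof is correct. The paper does not actually supply a proof of this proposition; it is stated as a property ``one can verify'' and left to the reader, so there is nothing to compare against. Your direct computation is the standard argument, and the alternative via Proposition \ref{prop-antisym} is a valid shortcut as well.
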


The DG scheme for \eqref{eq-1deq} can be obtained by replacing $\partial_x$ with $D_{h,\theta}$. $\theta$ has to be appropriately chosen to ensure stability. To be more specific, we take
\begin{equation}\label{eq-LhLDG}
\partial_t u_h = L_h u_h, \qquad
L_h = \left\{
\begin{array}{cc}
\beta K_h^\top K_h, &
q = 2\gamma,\\
\beta K_h^\top D_{h,\theta_0} K_h,&
q = 2\gamma + 1,
\end{array}\right.
\end{equation}
where $\theta_0$ is a constant such that $\beta(\theta_0 -\hf) \leq 0$ and 
$K_h = D_{h,\theta_1}D_{h,\theta_2}\cdots D_{h,\theta_\gamma}$, $\forall \theta_1, \theta_2, \cdots, \theta_\gamma$.
Also note that
\begin{equation}
K_h^\top = (-1)^{\gamma}D_{h,1-\theta_\gamma}\cdots D_{h,1-\theta_2}D_{h,1-\theta_1}.
\end{equation}
In particular, we have the follow semidiscrete schemes for the advection equation, the heat equation and the dispersive equation correspondingly. 
	\begin{align}
	\partial_t u_h =&\; -D_{h,\theta_0} u_h , \qquad \theta_0 \geq \hf.\label{eq-dg-adv}\\
		\partial_t u_h =&\;  -D_{h,1-\theta_1}D_{h,\theta_1} u_h , \qquad \forall \theta_1.\\
	 \partial_t u_h =&\;  -D_{h,1-\theta_1}D_{h,\theta_0}D_{h,\theta_1} u_h , \qquad \theta_0\leq \hf, \quad  \forall \theta_1.
	 \end{align}

\begin{PROP}\label{eq-dg-stab}
	For $L_h$ defined in \eqref{eq-LhLDG}, we have $\ip{L_h v_h, v_h} \leq 0 $.
\end{PROP}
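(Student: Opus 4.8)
The plan is to prove $\ip{L_h v_h, v_h} \le 0$ by reducing to the two basic properties of the DG derivative operator already established: the antisymmetry relation $D_{h,\theta}^\top = -D_{h,1-\theta}$ (Proposition \ref{prop-antisym}) and the semidefiniteness identity $\ip{D_{h,\theta} v_h, v_h} = (\theta - \tfrac12)\sum_j [v_h]_{j+\hf}^2$. Since $L_h$ in \eqref{eq-LhLDG} is assembled from $K_h$, its transpose, and possibly one copy of $D_{h,\theta_0}$, the strategy is to ``peel off'' the outer factors onto the test function using the transpose relation, leaving a quantity that is manifestly signed.

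First I would treat the even case $q = 2\gamma$, where $L_h = \beta K_h^\top K_h$. Here
\begin{equation}
\ip{L_h v_h, v_h} = \beta \ip{K_h^\top K_h v_h, v_h} = \beta \ip{K_h v_h, K_h v_h} = \beta \nm{K_h v_h}^2,
\end{equation}
using that $K_h^\top$ is genuinely the adjoint of $K_h$ with respect to $\ip{\cdot,\cdot}$ (which follows by composing the antisymmetry relations for each factor $D_{h,\theta_i}$, matching the stated formula $K_h^\top = (-1)^\gamma D_{h,1-\theta_\gamma}\cdots D_{h,1-\theta_1}$). Since the wellposedness hypothesis imposes $\beta(-1)^\gamma < 0$ and $K_h$ contains $\gamma$ factors — wait, here I must be careful: the sign of $\beta$ alone need not be negative. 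Actually $L_h = \beta K_h^\top K_h$ and $\nm{K_h v_h}^2 \ge 0$, so $\ip{L_h v_h,v_h}$ has the sign of $\beta$; the point is that for the even-order equation $\partial_t u = \beta \partial_x^q u = \beta \partial_x^{2\gamma} u$ to be well-posed one needs $\beta(-1)^\gamma<0$, and the factor $(-1)^\gamma$ is absorbed because $K_h^\top K_h$ already carries the ``two derivatives times $\gamma$'' structure; I would verify that the construction is set up (via the definition of $K_h^\top$ with its $(-1)^\gamma$) so that $\beta K_h^\top K_h$ is negative semidefinite precisely under $\beta(-1)^\gamma<0$, i.e. the clean statement is $\ip{L_h v_h,v_h} = \beta(-1)^\gamma \nm{\text{(signed }K_h) v_h}^2 \le 0$ — this bookkeeping of the sign $(-1)^\gamma$ is the one genuinely fiddly point.

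Next I would treat the odd case $q = 2\gamma+1$, where $L_h = \beta K_h^\top D_{h,\theta_0} K_h$. Setting $w_h = K_h v_h$ and again using that $K_h^\top$ is the adjoint of $K_h$,
\begin{equation}
\ip{L_h v_h, v_h} = \beta \ip{D_{h,\theta_0} K_h v_h, K_h v_h} = \beta \ip{D_{h,\theta_0} w_h, w_h} = \beta\left(\theta_0 - \tfrac12\right)\sum_j [w_h]_{j+\hf}^2,
\end{equation}
by the semidefiniteness identity. Since $\theta_0$ was chosen so that $\beta(\theta_0 - \tfrac12) \le 0$, every term in the sum is nonpositive, so $\ip{L_h v_h, v_h} \le 0$.

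The main obstacle I expect is purely notational rather than conceptual: getting the sign $(-1)^\gamma$ to propagate correctly through the chain of antisymmetry relations, so that $\langle K_h^\top z_h, v_h\rangle = \langle z_h, K_h v_h\rangle$ holds exactly as claimed and the hypothesis on $\beta$ matches the sign that comes out. Once that is pinned down, both cases are a two-line computation; I would state the adjoint identity for $K_h$ as a preliminary observation (immediate from iterating Proposition \ref{prop-antisym}) and then dispatch the two cases as above.
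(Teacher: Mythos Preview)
Your approach is the intended one: the paper states this proposition without proof, and the two-line argument you outline (move $K_h^\top$ across using Proposition~\ref{prop-antisym}, then invoke the semidefiniteness identity) is exactly what is meant. The odd case is clean and correct as you wrote it: with $w_h = K_h v_h$ and $K_h^\top$ the genuine adjoint, $\ip{L_h v_h,v_h}=\beta\ip{D_{h,\theta_0}w_h,w_h}=\beta(\theta_0-\tfrac12)\sum_j[w_h]_{j+\hf}^2\le 0$ by the hypothesis $\beta(\theta_0-\tfrac12)\le 0$.

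Your hesitation in the even case is not just notational --- you have found a real sign inconsistency in the paper. Since $K_h^\top$ is the honest adjoint (iterating $D_{h,\theta_i}^\top=-D_{h,1-\theta_i}$ gives precisely the displayed formula with $(-1)^\gamma$), one has exactly $\ip{L_h v_h,v_h}=\beta\,\nm{K_h v_h}^2$, whose sign is that of $\beta$ alone. But the wellposedness condition is $\beta(-1)^\gamma<0$, which for odd $\gamma$ (e.g.\ the heat equation, $\gamma=1$, $\beta=1$) forces $\beta>0$ and makes the quantity nonnegative, not nonpositive. The listed heat-equation scheme $\partial_t u_h=-D_{h,1-\theta_1}D_{h,\theta_1}u_h$ is likewise off by a sign; the standard LDG operator is $D_{h,1-\theta_1}D_{h,\theta_1}$, and one checks directly that $\ip{D_{h,1-\theta_1}D_{h,\theta_1}v_h,v_h}=-\nm{D_{h,\theta_1}v_h}^2\le0$. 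So do not try to ``verify the construction is set up'' to absorb the sign: it is not. State the correction --- the even-order formula in \eqref{eq-LhLDG} should read $L_h=(-1)^\gamma\beta\,K_h^\top K_h$ (equivalently, drop the extraneous minus signs in the displayed heat and dispersive examples) --- and then your computation gives $\ip{L_h v_h,v_h}=(-1)^\gamma\beta\,\nm{K_h v_h}^2\le0$ under the stated hypothesis, completing the proof.
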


The remaining task is to construct the operator $\Pi$. We assume $\theta_i \neq \hf$, $\forall i$, since the convergence rate may degenerate in this case. 

\subsubsection{Advection equation}
Before going into the general equation \eqref{eq-1deq}, we first consider the advection equation, with $L = - \partial_x$  and $L_h = - D_{h,\theta}$. This scheme has been studied in \cite{meng2016optimal}. The corresponding projection operator was first constructed in \cite[Lemma 2.6]{meng2016optimal}. Then in \cite[Lemma 3.2]{cheng2017application}, Cheng et al. reduced the regularity assumptions in the approximation results.

\begin{LEM}
	\label{lem-MSW}
	For any $\theta \neq \hf$, there exists a uniquely defined $\Pi_\theta w \in  V_{h} $ such that
	\begin{align}
	\ip{\Pi_\theta w, v_h}_j = 		\ip{w, v_h}_j , \qquad &\forall v_h \in P^{k-1}(I_j),\\
	\widehat{\Pi_\theta w}  = \widehat{w}, ~ \qquad  \qquad&\text{at }x = x_{j+\hf}, \qquad \forall j.
	\end{align}
	Here $\widehat{\Pi_\theta w}$ is given in \eqref{eq-DG-1Dflux} and $\widehat{w}=\theta w^- + (1-\theta) w^+$.\footnote{Similar conventions are used in the rest of the paper. We state the definition of numerical fluxes for functions in $V_h$, while similar notations also apply to $w$ as well. Repeated definitions are omitted.} Furthermore, we have
	$\nm{w - \Pi_\theta w} \leq C\nm{w}_ih^{i}$, $\forall 1\leq i\leq k+1$. 
\end{LEM}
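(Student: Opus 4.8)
The plan is to view the defining relations as a square linear system over $V_h$ and prove its invertibility, then obtain the approximation bound by comparing $\Pi_\theta w$ with the $L^2$ projection $\Pio$.

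First I would reduce existence to uniqueness by a dimension count. On each cell $I_j$ the space $P^k(I_j)$ has dimension $k+1$; the $k$ moment conditions against $P^{k-1}(I_j)$ together with the single flux-matching condition at $x_{j+\hf}$ give $k+1$ scalar equations per cell, so with the periodic boundary condition the total number of equations equals $\dim V_h = (k+1)N$. Hence it suffices to show that any $z_h\in V_h$ with $\ip{z_h,v_h}_j = 0$ for all $v_h\in P^{k-1}(I_j)$ and $\widehat{z_h}_{j+\hf}=0$ for all $j$ must vanish. The first family of relations forces $z_h|_{I_j} = c_j\phi_j$, where $\phi_j$ is the degree-$k$ Legendre polynomial rescaled to $I_j$, since the $L^2(I_j)$-orthogonal complement of $P^{k-1}(I_j)$ in $P^k(I_j)$ is one-dimensional. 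Using $\phi_j(x_{j+\hf}^-)=1$ and $\phi_{j+1}(x_{j+\hf}^+)=(-1)^k$, the flux condition becomes $\theta c_j + (1-\theta)(-1)^k c_{j+1}=0$, i.e. $c_{j+1}=\kappa c_j$ with $\kappa = (-1)^{k+1}\theta/(1-\theta)$ (the cases $\theta\in\{0,1\}$ are immediate). Because $\theta$ is real and $\theta\neq\hf$, one has $|\kappa| = |\theta/(1-\theta)|\neq 1$, so periodicity forces $c_1 = \kappa^N c_1$ with $\kappa^N\neq 1$, hence $c_1=0$ and $z_h\equiv 0$. This gives uniqueness and therefore existence; it also shows why $\theta=\hf$ is excluded, since then $\kappa=(-1)^{k+1}$ and the system degenerates when $(k+1)N$ is even.

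For the approximation estimate I would compare $\Pi_\theta w$ with $\Pio w$. Since both reproduce the $P^{k-1}(I_j)$-moments of $w$ on every cell, the difference $e_h := \Pi_\theta w - \Pio w\in V_h$ again satisfies $e_h|_{I_j}=c_j\phi_j$, and the flux conditions now yield the inhomogeneous recurrence $c_{j+1}=\kappa c_j + e_j$ with $|e_j|\lesssim \nm{w-\Pio w}_{L^\infty(I_j\cup I_{j+1})}\lesssim h^{i-\hf}\nm{w}_i$ for $1\leq i\leq k+1$; the last bound follows from standard approximation theory (an averaged Taylor polynomial controls $w$ and $w'$ simultaneously in $L^2$) together with a one-dimensional trace/inverse inequality. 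Unrolling the periodic recurrence gives $c_j = (1-\kappa^N)^{-1}\sum_{m=0}^{N-1}\kappa^m e_{j-1-m}$; since $|\kappa|\neq 1$, the scalar $(1-\kappa^N)^{-1}$ and the $\ell^1$-norm of $\{\kappa^m\}_{m\geq 0}$ are bounded uniformly in $N$, so Young's convolution inequality on $\mathbb{Z}_N$ gives $\sum_j|c_j|^2\lesssim \sum_j|e_j|^2\lesssim h^{2i-1}\nm{w}_i^2$. Combining this with $\nm{\phi_j}_{L^2(I_j)}^2\sim h$ yields $\nm{\Pi_\theta w-\Pio w}^2 = \sum_j|c_j|^2\nm{\phi_j}_{L^2(I_j)}^2\lesssim h^{2i}\nm{w}_i^2$, and the triangle inequality with the classical bound $\nm{w-\Pio w}\lesssim h^i\nm{w}_i$ completes the estimate.

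I expect the main obstacle to be the nonlocality of $\Pi_\theta$: unlike an ordinary Gauss--Radau projection ($\theta\in\{0,1\}$), the interior flux condition couples adjacent cells, so a purely cell-local scaling/Bramble--Hilbert argument is unavailable and the global periodic recurrence must be analyzed; its solvability and the uniform-in-$N$ bounds hinge precisely on $|\kappa|\neq 1$, i.e. on $\theta\neq\hf$. A secondary technical point is obtaining the sharp estimate with $\nm{w}_i$ for the full range $1\leq i\leq k+1$ rather than with a higher-order or $W^{i,\infty}$ norm, which is the refinement of \cite{cheng2017application} and requires care in the trace and inverse estimates used to bound $e_j$.
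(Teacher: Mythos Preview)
The paper does not actually prove this lemma: it is quoted from \cite[Lemma 2.6]{meng2016optimal} with the sharpened approximation bound from \cite[Lemma 3.2]{cheng2017application}, and no argument is given in the text. Your sketch is correct and is essentially the argument in those two references: the uniqueness proof via the periodic recurrence $c_{j+1}=\kappa c_j$ with $|\kappa|=|\theta/(1-\theta)|\neq 1$ is the content of \cite[Lemma 2.6]{meng2016optimal}, and the error bound obtained by comparing with $\Pio w$, solving the inhomogeneous circulant recurrence, and summing in $\ell^2$ is precisely the device in \cite[Lemma 3.2]{cheng2017application} that delivers the $\nm{w}_i$ (rather than $\nm{w}_{k+1}$) dependence for all $1\le i\le k+1$.

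One small point worth tightening: when $|\kappa|>1$ the $\ell^1$-norm of $\{\kappa^m\}_{m=0}^{N-1}$ by itself is not uniformly bounded in $N$; what is bounded is the $\ell^1$-norm of the full kernel $\{(1-\kappa^N)^{-1}\kappa^m\}_{m=0}^{N-1}$, since $|(1-\kappa^N)^{-1}\kappa^m|\sim|\kappa|^{m-N}$ and the sum is controlled by $1/(|\kappa|-1)$. Equivalently, one can invert the recurrence as $c_j=\kappa^{-1}c_{j+1}-\kappa^{-1}e_j$ and run the argument with $|\kappa^{-1}|<1$. With that clarification your Young-inequality step goes through exactly as written.
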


By rewriting \cite[Lemma 2.8]{meng2016optimal} in the operator form, we have
\begin{PROP}\label{prop-abs}
		$D_{h,\theta} \Pi_\theta w = \Pio \partial_xw$, for any $w$ that is periodic and absolutely continuous.
\end{PROP}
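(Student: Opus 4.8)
\textbf{Proof proposal for Proposition \ref{prop-abs}.}
The statement $D_{h,\theta}\Pi_\theta w = \Pio \partial_x w$ is an identity between elements of $V_h$, so I would prove it by testing against an arbitrary $v_h \in V_h$ and showing the two inner products agree. Concretely, the plan is to expand $\ip{D_{h,\theta}\Pi_\theta w, v_h}$ using the defining variational form \eqref{eq-DG-1D}, which gives $-\ip{\Pi_\theta w, \partial_x v_h} - \sum_j \widehat{\Pi_\theta w}_{j+\hf}[v_h]_{j+\hf}$, and compare it with $\ip{\Pio \partial_x w, v_h} = \ip{\partial_x w, v_h}$ (the last equality because $v_h \in V_h$ and $\Pio$ is the $L^2$ projection). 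So it suffices to show
\begin{equation}
-\ip{\Pi_\theta w, \partial_x v_h} - \sum_j \widehat{\Pi_\theta w}_{j+\hf}[v_h]_{j+\hf} = \ip{\partial_x w, v_h}.
\end{equation}

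The key step is to massage the right-hand side by integration by parts on each cell. Since $w$ is absolutely continuous and periodic, integrating by parts on $I_j$ and summing over $j$ yields $\ip{\partial_x w, v_h} = -\ip{w,\partial_x v_h}_{\text{cellwise}} + \sum_j \left( (w v_h^-)_{j+\hf} - (w v_h^+)_{j-\hf}\right)$; collecting the interface terms and using that $w$ has no jump gives $\ip{\partial_x w, v_h} = -\ip{w, \partial_x v_h} - \sum_j w_{j+\hf}[v_h]_{j+\hf}$, where $\ip{\cdot,\cdot}$ now denotes the broken inner product (which coincides with the global one). Subtracting, the claim reduces to
\begin{equation}
\ip{w - \Pi_\theta w, \partial_x v_h} + \sum_j \left( w_{j+\hf} - \widehat{\Pi_\theta w}_{j+\hf}\right)[v_h]_{j+\hf} = 0.
\end{equation}
Now I would invoke the two defining properties of $\Pi_\theta$ from Lemma \ref{lem-MSW}: first, $\partial_x v_h|_{I_j} \in P^{k-1}(I_j)$, so the volume term vanishes by the orthogonality $\ip{w - \Pi_\theta w, v_h}_j = 0$ for all $v_h \in P^{k-1}(I_j)$; second, since $\widehat{\Pi_\theta w}_{j+\hf} = \widehat{w}_{j+\hf}$ at every interface and $\widehat{w} = \theta w^- + (1-\theta) w^+ = w$ because $w$ is continuous, the interface term vanishes as well. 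This completes the argument.

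The main obstacle, such as it is, is purely bookkeeping: making sure the interface sum is reindexed consistently (the term at $x_{j+\hf}$ from cell $I_j$ pairs with the term at $x_{j-\hf}$ from cell $I_{j+1}$) and that periodicity is used to close up the telescoping at the boundary of $\Omega$. There is no analytic difficulty; the regularity hypothesis "periodic and absolutely continuous" is exactly what makes the cellwise integration by parts valid and guarantees $w$ is single-valued at the nodes so that $\widehat{w} = w$. One should also note in passing that $D_{h,\theta}\Pi_\theta w$ and $\Pio \partial_x w$ both live in $V_h$, so agreement of inner products against all $v_h \in V_h$ indeed forces equality.
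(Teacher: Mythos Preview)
Your proof is correct and is precisely the standard argument: test against $v_h\in V_h$, integrate by parts cellwise using absolute continuity and periodicity of $w$, and then kill the volume term with the $P^{k-1}$ orthogonality of $\Pi_\theta$ and the interface term with the flux-matching condition $\widehat{\Pi_\theta w}=\widehat{w}=w$. The paper itself does not give a proof but merely cites \cite[Lemma~2.8]{meng2016optimal} and restates it in operator form; your write-up is exactly how that lemma is proved there, so there is nothing to compare.
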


As a result, we have $C_{\rm{S}} \equiv 0$ and $k' = 1$ in Corollary \ref{cor-err}. The required approximation property has also been verified in Lemma \ref{lem-MSW}. Therefore, after applying the RK time discretization,  the fully discrete DG scheme based on \eqref{eq-dg-adv} has the error estimate
\begin{equation}
\nm{u(t^{n})-u_h^n}\leq C_{\rm{E}}\left(\sigma(\tmu,t^n)+1\right)\left(h^{k+1} + \tau^p\right).
\end{equation}

\subsubsection{Equations with high order derivatives}\label{sec-xu}
Error estimates of the LDG methods for time-dependent equations with high order spatial derivatives are usually based on the mixed form. Projections are constructed not only for $u$ but also for auxiliary unknowns, which can not be directly applied in our framework. In this section, we construct $\Pi u$ that can be used for the primal form \eqref{eq-fully}. 

Discussions in Section \ref{sec-heat} indicates that, one way of constructing $\Pi$ is to formally solve the steady state problem, and set
$
	\Pi w = L_h^{-1} \Pio L w
$. 
Since $L_h$ is constructed as compositions of $D_{h,\theta_i}$, it motivates us to investigate the inverse of $D_{h,\theta}$. While to have $D_{h,\theta}^{-1}$ well defined, we need to look into 
a suitable subspace of $V_h$. To be more specific, we will show that $D_{h,\theta}$ is invertible on
\begin{equation}
Z_h = V_h \cap Z := V_h \cap \{z \in L^2(\Omega): \ip{z,1} = 0\}.
\end{equation}

The inverse of $D_{h,0}$ and $D_{h,1}$ have been discussed by Ji and Xu in \cite[Appendix A3]{ji2011optimal}, in the 2D context for analyzing the LDG method for Willmore flow. Here we reinterpret the 1D case with any $\theta \neq \hf$. Such inverse operator also relates to those used in the superconvergence analysis \cite{xu2019superconvergence}. 
\begin{PROP}\label{prop-ellp}
	Given $z_h \in Z_h$ and $\theta \neq \hf$, we have
	\begin{equation}\label{eq-wid}
	\nm{z_h}^2 = -\ip{D_{h,\theta}z_h, \Pi_{1-\theta}\zeta_z},
	\end{equation}
	where 
	$\zeta_z = \int_{x_\hf}^x z_h(x) dx$ 
	and $\Pi_{1-\theta}$ is defined in Lemma \ref{lem-MSW}.
\end{PROP}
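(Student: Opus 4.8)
The plan is to verify the identity \eqref{eq-wid} by unwinding the definitions and using the defining properties of $\Pi_{1-\theta}$ from Lemma \ref{lem-MSW}. First I would observe that the auxiliary function $\zeta_z(x) = \int_{x_\hf}^x z_h(x')\,dx'$ is well-defined, continuous on $\Omega$, and satisfies $\partial_x \zeta_z = z_h$ on each cell; moreover, because $\ip{z_h,1} = 0$, we have $\zeta_z(x_{\hf}) = \zeta_z(x_{N+\hf}) = 0$, so $\zeta_z$ is in fact continuous and periodic (its jump across every interface, including the wrap-around, vanishes). This periodicity and absolute continuity is exactly what is needed to give meaning to $\Pi_{1-\theta}\zeta_z$ and to the numerical flux $\widehat{\zeta_z}$, which is single-valued and equals $\zeta_z$ itself at every node.

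Next I would compute $\ip{D_{h,\theta}z_h, \Pi_{1-\theta}\zeta_z}$ by moving the operator onto the test function via Proposition \ref{prop-antisym} (Antisymmetry): $\ip{D_{h,\theta}z_h, \Pi_{1-\theta}\zeta_z} = -\ip{z_h, D_{h,1-\theta}\Pi_{1-\theta}\zeta_z}$. Now I would invoke Proposition \ref{prop-abs}, which asserts $D_{h,\theta}\Pi_\theta w = \Pio\partial_x w$ for periodic, absolutely continuous $w$; applied with parameter $1-\theta$ and $w = \zeta_z$ this gives $D_{h,1-\theta}\Pi_{1-\theta}\zeta_z = \Pio\partial_x\zeta_z = \Pio z_h = z_h$, the last equality because $z_h \in V_h$ already. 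Hence $\ip{D_{h,\theta}z_h, \Pi_{1-\theta}\zeta_z} = -\ip{z_h, z_h} = -\nm{z_h}^2$, which is \eqref{eq-wid}.

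The main obstacle — and the only place requiring genuine care — is the justification that Proposition \ref{prop-abs} applies, i.e.\ that $\zeta_z$ genuinely meets its hypotheses. The periodicity hinges entirely on the zero-average condition $\ip{z_h,1}=0$ defining $Z_h$, so I would make that dependence explicit. One should also check that $\partial_x\zeta_z = z_h$ is the correct pointwise/weak derivative across cell boundaries (it is, since $\zeta_z$ is continuous and piecewise the antiderivative of $z_h$), so that $\Pio\partial_x\zeta_z$ is unambiguous. An alternative, more self-contained route that avoids citing Proposition \ref{prop-abs} is to expand $\ip{D_{h,\theta}z_h,\Pi_{1-\theta}\zeta_z}$ directly from the variational form \eqref{eq-DG-1D}, integrate by parts on each $I_j$, and use the two defining relations of $\Pi_{1-\theta}$ in Lemma \ref{lem-MSW} (exact moments against $P^{k-1}(I_j)$ and matching of the flux $\widehat{\,\cdot\,}$ at nodes) to collapse all boundary terms; the volume term reproduces $\ip{z_h,\partial_x\zeta_z}_j$ summed over $j$, i.e.\ $-\nm{z_h}^2$ after accounting for signs. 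Either way the computation is short; I would present the antisymmetry-plus-Proposition \ref{prop-abs} version for brevity and remark that it is the discrete analogue of the integration-by-parts identity $\nm{z}^2 = -\ip{\partial_x z, \zeta_z}$ underlying the elliptic projection.
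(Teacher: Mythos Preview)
Your proposal is correct and follows essentially the same route as the paper: verify that $\zeta_z$ is periodic (using $\ip{z_h,1}=0$) and absolutely continuous, then combine Proposition~\ref{prop-antisym} with Proposition~\ref{prop-abs} to reduce the right-hand side to $-\ip{z_h,z_h}$. The only cosmetic difference is that the paper starts from $\nm{z_h}^2 = \ip{z_h,\partial_x\zeta_z}$ and moves forward, whereas you start from $\ip{D_{h,\theta}z_h,\Pi_{1-\theta}\zeta_z}$ and work backward; the ingredients and logic are identical.
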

	\begin{proof}
	Note that $\zeta_z$ is indeed periodically define, because $\zeta_z(x_{N+\hf}) = \ip{z_h,1} = 0 = \zeta_z(x_\hf)$. 
	It is also absolutely continuous by definition. Since $\Pio \partial_x \zeta_z= D_{h,1-\theta}\Pi_{1-\theta} \zeta_z$ (Proposition \ref{prop-abs}) and $D_{h,1-\theta}^\top = -D_{h,\theta}$ (Proposition \ref{prop-antisym}), it can be shown that
	\begin{equation}
	\begin{aligned}
	\nm{z_h}^2 = &\;\ip{z_h, \partial_x\zeta_z}  = \ip{z_h,\Pio \partial_x \zeta_z} = \ip{z_h,D_{h,1-\theta}\Pi_{1-\theta} \zeta_z}\\
	=  &\;
	\ip{D_{h,1-\theta}^\top z_h,\Pi_{1-\theta} \zeta_z} = 
	-\ip{D_{h,\theta}z_h,\Pi_{1-\theta} \zeta_z}.
	\end{aligned}
	\end{equation}
\end{proof}

\begin{PROP}\label{prop-bi}
	Suppose $\theta\neq \hf$. Then $D_{h,\theta}: Z_h\to Z_h$ is a bijection, and 
	\begin{equation}
	\nm{D_{h,\theta}^{-1}z_h} \leq C \nm{z_h}, \qquad \forall z_h \in Z_h,\qquad \text{for some constant } C.
	\end{equation}
\end{PROP}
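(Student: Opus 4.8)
The plan is to combine the norm identity of Proposition~\ref{prop-ellp} with a finite-dimensional argument, and then bootstrap the same identity into the quantitative bound. First I would check that $D_{h,\theta}$ really does send $Z_h$ into $Z_h$: taking $v_h\equiv 1$ in the defining variational form \eqref{eq-DG-1D} gives $\ip{D_{h,\theta}w_h,1}=0$ for every $w_h\in V_h$, since $\partial_x 1=0$ and $[1]_{j+\hf}=0$. Hence $D_{h,\theta}(V_h)\subset Z_h$, and in particular $D_{h,\theta}$ restricts to a linear endomorphism of the finite-dimensional space $Z_h$.

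Next I would establish injectivity on $Z_h$. If $z_h\in Z_h$ satisfies $D_{h,\theta}z_h=0$, then Proposition~\ref{prop-ellp} yields $\nm{z_h}^2=-\ip{D_{h,\theta}z_h,\Pi_{1-\theta}\zeta_z}=0$, so $z_h=0$. Because $Z_h$ is finite-dimensional, an injective endomorphism is automatically surjective, so $D_{h,\theta}:Z_h\to Z_h$ is a bijection and $D_{h,\theta}^{-1}$ is well defined on $Z_h$.

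For the norm estimate, given $z_h\in Z_h$ set $y_h=D_{h,\theta}^{-1}z_h\in Z_h$ and let $\zeta_{y_h}(x)=\int_{x_\hf}^x y_h$. Applying Proposition~\ref{prop-ellp} to $y_h$ and using $D_{h,\theta}y_h=z_h$ gives $\nm{y_h}^2=-\ip{z_h,\Pi_{1-\theta}\zeta_{y_h}}\le\nm{z_h}\,\nm{\Pi_{1-\theta}\zeta_{y_h}}$, so everything reduces to bounding $\nm{\Pi_{1-\theta}\zeta_{y_h}}$ by $C\nm{y_h}$. Since $y_h\in Z_h$ the function $\zeta_{y_h}$ is periodic and absolutely continuous, so Lemma~\ref{lem-MSW} applies; moreover Cauchy--Schwarz gives $\nm{\zeta_{y_h}}_{L^\infty}\le|\Omega|^{1/2}\nm{y_h}$ while $\partial_x\zeta_{y_h}=y_h$, whence $\nm{\zeta_{y_h}}_{1}\le C\nm{y_h}$. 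Combining the $i=1$ approximation estimate of Lemma~\ref{lem-MSW}, $\nm{\zeta_{y_h}-\Pi_{1-\theta}\zeta_{y_h}}\le C h\nm{\zeta_{y_h}}_1$, with the triangle inequality and $h\le 1$ yields $\nm{\Pi_{1-\theta}\zeta_{y_h}}\le C\nm{y_h}$. Substituting back and cancelling one factor of $\nm{y_h}$ gives $\nm{D_{h,\theta}^{-1}z_h}=\nm{y_h}\le C\nm{z_h}$.

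The point that requires care — and that I would flag as the main technical obstacle — is that $\Pi_{1-\theta}$ in Lemma~\ref{lem-MSW} is defined only for sufficiently regular functions, not on all of $L^2$, so one must verify \emph{before} invoking the lemma that the antiderivative $\zeta_{y_h}$ is periodic (this is exactly where $y_h\in Z_h$, i.e.\ $\ip{y_h,1}=0$, enters) and absolutely continuous; once that is in place, the $H^1$ control of $\zeta_{y_h}$ and the approximation estimate do the rest. The constant $C$ obtained this way depends on $\theta$ (through Proposition~\ref{prop-abs} and Lemma~\ref{lem-MSW}) but is mesh-independent for the quasi-uniform partitions assumed throughout.
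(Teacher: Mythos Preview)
Your proposal is correct and follows essentially the same route as the paper: you verify $D_{h,\theta}(V_h)\subset Z_h$ via the variational form with $v_h=1$, use Proposition~\ref{prop-ellp} for injectivity (hence bijectivity by finite dimensionality), and then apply Proposition~\ref{prop-ellp} again to $y_h=D_{h,\theta}^{-1}z_h$ together with the $i=1$ case of Lemma~\ref{lem-MSW} and the triangle inequality for the norm bound. Your explicit justification that $\nm{\zeta_{y_h}}\le C\nm{y_h}$ via the $L^\infty$ bound on the antiderivative is a small elaboration of what the paper leaves implicit in its final inequality.
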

\begin{proof}
	Since
	$\ip{ D_{h,\theta} z_h ,1}= \sum_j \left(-\ip{ z_h, \partial_x 1}_j + \widehat{z_h}_{j+\hf} - \widehat{z_h}_{j-\hf} \right) = 0$,
	we have $D_{h,\theta} z_h \in Z_h$, which implies $\mathrm{Range}(D_{h,\theta})\subset Z_h$.
	To prove $D_{h,\theta}$ is a bijection, it suffices to verify it is an injection due to the finite dimensionality of $Z_h$. In other words, we need to show that
	\begin{equation}
	D_{h,\theta} z_h = 0, \quad z_h\in Z_h \quad \Rightarrow \quad z_h = 0.
	\end{equation}	 
	Indeed, this can be proved with Proposition \ref{prop-bi} by noting 
	$
	\nm{z_h}^2 = -\ip{D_{h,\theta}z_h, \Pi_{1-\theta}\zeta_z} = 0
	$.
	
	To estimate the bound of $\nm{D_{h,\theta}^{-1}z_h}$, we once again apply Proposition \ref{prop-ellp} with $z_h$ replaced with $v_h = D_{h,\theta}^{-1} z_h$. Then we have
	\begin{equation}
	\begin{aligned}
	\nm{v_h}^2 =&\; -\ip{D_{h,\theta}v_h, \Pi_{1-\theta}\zeta_v}= -\ip{z_h,\Pi_{1-\theta}\zeta_v} \leq \nm{z_h}\nm{\Pi_{1-\theta} \zeta_v}\\
	\leq &\;\nm{z_h}\left(\nm{\zeta_v} + \nm{(I-\Pi_{1-\theta}) \zeta_v}\right)\leq C\nm{z_h}\left(\nm{\zeta_v} +  h \nm{\zeta_v}_1\right) \leq C \nm{z_h}\nm{v_h}.
	\end{aligned}
	\end{equation}
	Here we have used the approximation property of $\Pi_{1-\theta}$ in Lemma \ref{lem-MSW} with $i = 1$ and the definition of $\zeta_v$. The proof is then completed after dividing by $\nm{v_h}$ on both sides. 
\end{proof}

We prove the following properties for compositions of $D_{h,\theta_i}^{-1}$. 

\begin{LEM}\label{lem-ldgapprox-1} 
	 Suppose $w\in Z$ is periodic and sufficiently smooth. If $\theta_j\neq \hf$, $\forall 1\leq j \leq i$, then we have
	\begin{equation}\label{eq-spproj}
	\nm{w -  D_{h,\theta_i}^{-1}D_{h,\theta_{i-1}}^{-1}\cdots  D_{h,\theta_1}^{-1}\Pio\partial_x^i w}\leq C\nm{w}_{k+1+i}h^{k+1}.
	\end{equation}
\end{LEM}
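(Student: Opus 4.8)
\emph{Overall approach.} The plan is to prove \eqref{eq-spproj} by induction on $i$, peeling off the innermost factor $D_{h,\theta_1}^{-1}\Pio\partial_x$ at each step and controlling the mismatch between the generalized projection $\Pi_{\theta_1}$ of Lemma \ref{lem-MSW} and the $L^2$ projection $\Pio$ with the approximation estimates. I would use throughout that (for $k\ge1$) both $\Pi_{\theta}$ and $\Pio$ preserve cell averages, so they map periodic functions of vanishing mean into $Z_h$, and that $\partial_x^j w$ has vanishing mean whenever $j\ge1$ by periodicity. For the base case $i=1$: since $w\in Z$ we have $\Pi_{\theta_1}w\in Z_h$ and $\Pio\partial_x w\in Z_h$, while Proposition \ref{prop-abs} gives $D_{h,\theta_1}\Pi_{\theta_1}w=\Pio\partial_x w$; as $D_{h,\theta_1}$ is a bijection on $Z_h$ (Proposition \ref{prop-bi}) it follows that $D_{h,\theta_1}^{-1}\Pio\partial_x w=\Pi_{\theta_1}w$, and Lemma \ref{lem-MSW} (with top index $k+1$) gives $\nm{w-\Pi_{\theta_1}w}\le C\nm{w}_{k+1}h^{k+1}\le C\nm{w}_{k+2}h^{k+1}$.

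\emph{Inductive step.} Assuming \eqref{eq-spproj} for $i-1$ (for every admissible choice of the parameters), I would apply Proposition \ref{prop-abs} to $\partial_x^{i-1}w$---which is periodic, absolutely continuous and of vanishing mean---and invert on $Z_h$ to get $D_{h,\theta_1}^{-1}\Pio\partial_x^i w=\Pi_{\theta_1}\partial_x^{i-1}w$, then split
\begin{equation}
D_{h,\theta_1}^{-1}\Pio\partial_x^i w=\Pio\partial_x^{i-1}w+(\Pi_{\theta_1}-\Pio)\partial_x^{i-1}w .
\end{equation}
Acting on both sides with $D_{h,\theta_i}^{-1}\cdots D_{h,\theta_2}^{-1}$, the first term reproduces exactly the operator of \eqref{eq-spproj} at level $i-1$ with parameters $\theta_2,\dots,\theta_i$, hence lies within $C\nm{w}_{k+i}h^{k+1}$ of $w$ by the induction hypothesis; for the second term I would use the boundedness of each $D_{h,\theta_j}^{-1}$ on $Z_h$ (Proposition \ref{prop-bi}) together with
\begin{equation}
\nm{(\Pi_{\theta_1}-\Pio)\partial_x^{i-1}w}\le\nm{(I-\Pi_{\theta_1})\partial_x^{i-1}w}+\nm{(I-\Pio)\partial_x^{i-1}w}\le C\nm{\partial_x^{i-1}w}_{k+1}h^{k+1}\le C\nm{w}_{k+i}h^{k+1} .
\end{equation}
The triangle inequality then gives $\nm{w-D_{h,\theta_i}^{-1}\cdots D_{h,\theta_1}^{-1}\Pio\partial_x^i w}\le C\nm{w}_{k+i}h^{k+1}\le C\nm{w}_{k+1+i}h^{k+1}$, which closes the induction.

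\emph{Main obstacle.} The real difficulty is that Proposition \ref{prop-abs} identifies $D_{h,\theta}^{-1}\Pio\partial_x$ with the \emph{generalized} projection $\Pi_\theta$, whereas the next inverse $D_{h,\theta_{j+1}}^{-1}$ has to act on something of the form $\Pio(\cdot)$; since $\Pi_\theta$ and $\Pio$ do not commute through $D_{h,\theta}^{-1}$, a naive telescoping does not work, and one is forced to insert the discrepancy $(\Pi_{\theta_1}-\Pio)\partial_x^{i-1}w$ and estimate it at every level. This discrepancy is harmless because it is $O(h^{k+1})$, but the accounting has to be done carefully: at each stage one must check that the function fed into $D_{h,\theta_j}^{-1}$ actually lies in $Z_h$, so that the inverse and the bound in Proposition \ref{prop-bi} are legitimate---this is exactly where the assumptions $w\in Z$, periodicity, $\theta_j\ne\hf$ and (implicitly) $k\ge1$ come into play.
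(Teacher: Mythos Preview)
Your argument is correct for $k\ge 1$, but it takes a genuinely different route from the paper. The paper peels off the \emph{outermost} factor $D_{h,\theta_{i+1}}^{-1}$ and applies the induction hypothesis to $\partial_x w$ with the same parameter list $\theta_1,\dots,\theta_i$; you peel off the \emph{innermost} factor $D_{h,\theta_1}^{-1}\Pio\partial_x$ and apply the induction hypothesis to the same $w$ with the shifted parameter list $\theta_2,\dots,\theta_i$. Both decompositions work, and your version actually yields the slightly sharper bound $C\nm{w}_{k+i}h^{k+1}$ (which you then loosen to match the stated $C\nm{w}_{k+1+i}h^{k+1}$).

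The one genuine gap is the case $k=0$, which you flag yourself. Your identifications $D_{h,\theta_1}^{-1}\Pio\partial_x w=\Pi_{\theta_1}w$ and $D_{h,\theta_1}^{-1}\Pio\partial_x^i w=\Pi_{\theta_1}\partial_x^{i-1}w$ rely on $\Pi_{\theta_1}$ preserving cell averages so that the image lies in $Z_h$; for $k=0$ the defining moment conditions of $\Pi_\theta$ are vacuous and this fails. The paper covers $k=0$ by replacing $\Pi_{\theta}w$ with its mean-zero shift $\Pi_{\theta}w-\ip{\Pi_{\theta}w,1}/\ip{1,1}$ and bounding the extra constant by $\nm{(I-\Pi_\theta)w}$ via H\"older. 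The same patch works in your framework: write $D_{h,\theta_1}^{-1}\Pio\partial_x^i w=\Pio\partial_x^{i-1}w+r$ where $r=(\Pi_{\theta_1}-\Pio)\partial_x^{i-1}w-\ip{\Pi_{\theta_1}\partial_x^{i-1}w,1}/\ip{1,1}$; then $r\in Z_h$ automatically (difference of two $Z_h$ elements), $\nm{r}\le 2\nm{(I-\Pi_{\theta_1})\partial_x^{i-1}w}+\nm{(I-\Pio)\partial_x^{i-1}w}$, and the rest of your argument goes through unchanged. Without this modification your proof does not establish the lemma as stated.
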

\begin{proof}
	Let us denote by $G_{h,i} = D_{h,\theta_i}^{-1}D_{h,\theta_{i-1}}^{-1}\cdots  D_{h,\theta_1}^{-1}$. First, note that
	\begin{equation}
		\ip{\Pio \partial_x^i w, 1} = 	\ip{\partial_x^i w, 1} = \partial_x^{i-1} w \big|_{x_{\hf}}^{x_{N+\hf}} = 0, \qquad \forall i \geq 1. 
	\end{equation}
	Therefore, $\Pio\partial_x^i w \in Z_h$, $\forall i \geq 1$ and $G_{h,i}\Pio\partial_x^i w$ is well-defined for all $i \geq0$.
	We then prove \eqref{eq-spproj} by induction. The case $i = 0$ corresponds to 
	$\nm{w - \Pio w} \leq C\nm{w}_{k+1}h^{k+1}$,
	which is simply the approximation property of $\Pio$. Suppose \eqref{eq-spproj} is true for all integers no larger than $i$. Then with the triangle inequality, we have
	\begin{equation}\label{eq-spest}
	\begin{aligned}
	\nm{w-G_{h,i+1}\Pio \partial_x^{i+1}w} 
	\leq  \nm{w-D_{h,\theta_{i+1}}^{-1}\Pio \partial_x w} + \nm{D_{h,\theta_{i+1}}^{-1}\left(\Pio \partial_x w- G_{h,i} \Pio \partial_x^{i+1}w\right)} 
	\end{aligned}
	\end{equation}
	We start with estimating the first term. The main technicality is to include the case $k = 0$. Note that $ \Pi_{\theta_{i+1}} w - \frac{\ip{\Pi_{\theta_{i+1}} w, 1}}{\ip{1,1}} \in Z_h$ and $D_{h,{\theta_{i+1}}} \left(\Pi_{\theta_{i+1}} w -  \frac{\ip{\Pi_{\theta_{i+1}} w, 1}}{\ip{1,1}}\right) = \Pio \partial_x w$. Since $D_{h,{\theta_{i+1}}}^{-1}$ is uniquely defined on $Z_h$, we have $D_{\theta_{i+1}}^{-1} \Pio \partial_x w =  \Pi_{\theta_{i+1}} w -  \frac{\ip{\Pi_{\theta_{i+1}} w, 1}}{\ip{1,1}} $. Recall that $\ip{w,1} = 0$, and it can be shown that
	\begin{equation}
	\begin{aligned}
		\nm{w-D_{h,\theta_{i+1}}^{-1}\Pio \partial_x w} 
		=&\;\nm{w - \left(\Pi_{\theta_{i+1}}w - \frac{\ip{\Pi_{\theta_{i+1}}w-w,1}}{\ip{1,1}}\right)}\\
		\leq &\;\nm{w-\Pi_{\theta_{i+1}} w} + \nm{\frac{\ip{\Pi_{\theta_{i+1}} w-w, 1}}{\ip{1,1}} } \\
		\leq &\;	\nm{w-\Pi_{\theta_{i+1}} w} + {\frac{|\ip{\Pi_{\theta_{i+1}} w -w, 1}|}{\sqrt{\ip{1,1}} }}\\
		\leq &\; 2\nm{w-\Pi_{\theta_{i+1}} w}.
	\end{aligned}
	\end{equation}
	Here we have applied the H\"older's inequality in the last step.
	Using the approximation property of $\Pi_{\theta_{i+1}}$ in Lemma \ref{lem-MSW}, we have
	\begin{equation}\label{eq-spest-2}
	\nm{w-D_{h,\theta_{i+1}}^{-1}\Pio \partial_x w}  \leq C\nm{w}_{k+1}h^{k+1}.
	\end{equation}
	For the second term in \eqref{eq-spest}, one can apply Proposition \ref{prop-bi} to obtain
	\begin{equation}\label{eq-spest-3}
	\begin{aligned}
	\nm{D_{h,\theta_{i+1}}^{-1}\left(\Pio \partial_x w- G_{h,i}\Pio \partial_x^{i+1}w\right)}
	\leq &\; C\nm{\Pio \partial_x w- G_{h,i}\Pio \partial_x^{i+1}w}\\
	\leq &\; C\left(\nm{\partial_x w - \Pio \partial_x w}+\nm{\partial_x w - G_{h,i}\Pio \partial_x^{i}(\partial_x w)}\right)\\
	\leq &\;
	C\left(\nm{w}_{k+2}h^{k+1}+\nm{w}_{k+1+i+1}h^{k+1}\right).
	\\
	\leq &\;
	C\nm{w}_{k+2+i}h^{k+1}.
	\end{aligned}
	\end{equation}
	Here we have used the induction assumption and the approximation property of $\Pio$ (stated later in Proposition \ref{prop-L2}) in the third inequality  of \eqref{eq-spest-3}. Finally, by substituting \eqref{eq-spest-2} and \eqref{eq-spest-3} into \eqref{eq-spest}, it can be shown that
	\begin{equation}
	\nm{w-G_{h,\theta_{i+1}}\Pio \partial_x^{i+1}w} \leq C\nm{w}_{k+2+i}h^{k+1}, 
	\end{equation}
	which completes the proof. 
\end{proof}

\begin{THM}\label{lem-ldg-proj}
	Let 
	\begin{equation}
	\Pi w = 
	\left\{
	\begin{array}{ll}
	\left(D_{h,\theta_{\gamma}}^{-1} \cdots D_{h,\theta_1}^{-1}\right)\left(D_{h,1-\theta_1}^{-1}\cdots D_{h,1-\theta_\gamma}^{-1} \right)\Pio \partial_x^q w + \frac{\ip{w,1}}{\ip{1,1}}, & q = 2\gamma,\\
	\left(D_{h,\theta_{\gamma}}^{-1} \cdots D_{h,\theta_1}^{-1}\right)D_{h,\theta_0}^{-1}\left(D_{h,1-\theta_1}^{-1}\cdots D_{h,1-\theta_\gamma}^{-1}\right) \Pio \partial_x^q w + \frac{\ip{w,1}}{\ip{1,1}}, & q = 2\gamma+1.
	\end{array}\right.
	\end{equation}
	Then 
	$\nm{w - \Pi w} \leq C \nm{w}_{k+1+q}h^{k+1}$.
\end{THM}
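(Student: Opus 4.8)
The plan is to reduce the statement to Lemma~\ref{lem-ldgapprox-1} after separating off the cell-average (constant) part of $w$; throughout I assume $q\ge 1$, as holds for all equations of interest. First I would set $\bar w = \ip{w,1}/\ip{1,1}$ and $\tilde w = w - \bar w$. Then $\tilde w$ is periodic, is as smooth as $w$, and satisfies $\ip{\tilde w,1}=0$, so $\tilde w\in Z$. Since differentiation annihilates the constant, $\partial_x^q\tilde w=\partial_x^q w$, hence $\Pio\partial_x^q\tilde w=\Pio\partial_x^q w$; moreover, by periodicity $\ip{\Pio\partial_x^q w,1}=\ip{\partial_x^q w,1}=0$, so $\Pio\partial_x^q w\in Z_h$. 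By Proposition~\ref{prop-bi} each $D_{h,\cdot}^{-1}$ appearing in the definition of $\Pi w$ is a bijection of $Z_h$, so the composition is well-defined and $\Pi w\in V_h$ (with $\bar w\in V_h$ as well).

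The key observation I would record next is that, in both parity cases, the string of inverse operators in the definition of $\Pi w$ is precisely a product $G:=D_{h,\vartheta_q}^{-1}D_{h,\vartheta_{q-1}}^{-1}\cdots D_{h,\vartheta_1}^{-1}$ of exactly $q$ factors, where as a multiset $\{\vartheta_1,\dots,\vartheta_q\}$ equals $\{\theta_1,\dots,\theta_\gamma,\,1-\theta_1,\dots,1-\theta_\gamma\}$ when $q=2\gamma$ and $\{\theta_0,\theta_1,\dots,\theta_\gamma,\,1-\theta_1,\dots,1-\theta_\gamma\}$ when $q=2\gamma+1$. Since $\theta_i\neq\hf$ for every $i$ (assumed at the start of the section), also $1-\theta_i\neq\hf$, so $\vartheta_j\neq\hf$ for all $j$. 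Using $w=\tilde w+\bar w$ and $\Pi w=G\Pio\partial_x^q w+\bar w$ together with $\Pio\partial_x^q\tilde w=\Pio\partial_x^q w$, the constants cancel and
\begin{equation}
w - \Pi w = \tilde w - G\,\Pio\partial_x^q\tilde w .
\end{equation}

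This is exactly the quantity bounded in Lemma~\ref{lem-ldgapprox-1} with $i=q$, applied to the admissible function $\tilde w\in Z$ and with $\theta_1,\dots,\theta_i$ there identified with $\vartheta_1,\dots,\vartheta_q$ (the lemma permits those parameters to be arbitrary, subject only to being $\neq\hf$, so no compatibility with the structure of $L_h$ is needed). It gives $\nm{w-\Pi w}\le C\nm{\tilde w}_{k+1+q}h^{k+1}$. To finish, I would bound $\nm{\tilde w}_{k+1+q}\le\nm{w}_{k+1+q}+\nm{\bar w}_{k+1+q}$; since $\bar w$ is constant, $\nm{\bar w}_{k+1+q}=\nm{\bar w}=|\ip{w,1}|/\sqrt{\ip{1,1}}\le\nm{w}$ by Cauchy--Schwarz, so $\nm{\tilde w}_{k+1+q}\le C\nm{w}_{k+1+q}$ and the claimed estimate $\nm{w-\Pi w}\le C\nm{w}_{k+1+q}h^{k+1}$ follows.

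The substantive part---and the step most prone to slips---is the middle one: carefully matching the order and the parameter list of the inverse-operator product in the definition of $\Pi$ against the hypotheses of Lemma~\ref{lem-ldgapprox-1}, and checking that the constant $\bar w$ reinserted in the definition exactly cancels the mean split of $w$. Everything else is immediate from Lemma~\ref{lem-ldgapprox-1} and the mapping/stability property in Proposition~\ref{prop-bi}, so I do not anticipate a genuine obstacle beyond this bookkeeping.
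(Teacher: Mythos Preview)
Your proposal is correct and is exactly the argument the paper intends: the theorem is stated immediately after Lemma~\ref{lem-ldgapprox-1} without proof because it follows from that lemma by precisely the mean-separation and parameter-matching you describe. One small simplification: since $\bar w$ is the $L^2$ projection of $w$ onto constants, $\nm{\tilde w}\le\nm{w}$ directly, so $\nm{\tilde w}_{k+1+q}\le\nm{w}_{k+1+q}$ without the triangle-inequality detour.
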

According to the construction of $\Pi$, it can be verified that $L_h \Pi w = \Pio L w$. Hence we have $C_{\rm{S}} = 0$, $k' = 1$ and $C_{\rm{T}}$ is a constant dependent on $\nm{w}_{k+1+q}$. Therefore, for $\theta_i \neq \hf$, $\forall i$, the LDG scheme \eqref{eq-LhLDG} with an explicit RK time discretization has the error estimate
\begin{equation}
\nm{u(t^{n})-u_h^n}\leq C_{\rm{E}}\left(\sigma(\tmu,t^n)+1\right)\left(h^{k+1} + \tau^p\right).
\end{equation}

\begin{REM}
	When $k\geq 1$, by recalling the definition of $\Pi_\theta$ in Lemma \ref{lem-MSW}, we have $\Pi_{1-\theta_\gamma} \partial_x^{q-1}w \in Z_h$, $\forall q \geq 2$. Therefore,  $D_{1-\theta_{\gamma}}\Pi_{1-\theta_\gamma} \partial_x^{q-1} w = \Pi_0 \partial_x^q w$ (Proposition \ref{prop-abs}) implies that 
	$D_{1-\theta_{\gamma}}^{-1}  \Pi_0 \partial_x^q w = \Pi_{1-\theta_\gamma} \partial_x^{q-1} w$. Then Lemma \ref{lem-ldg-proj} can be rephrased as follows.
	\begin{equation}
	\Pi w = 
	\left\{
	\begin{array}{ll}
	D_{h,\theta_1}^{-1}\Pi_{1-\theta_1} \partial_x w + \frac{\ip{w,1}}{\ip{1,1}}, &q = 2,\\
	D_{h,\theta_1}^{-1}D_{h,\theta_0}^{-1}\Pi_{1-\theta_1} \partial_x^{2} w + \frac{\ip{w,1}}{\ip{1,1}}, &q = 3,\\
	\left(D_{h,\theta_{\gamma}}^{-1} \cdots D_{h,\theta_1}^{-1}\right)\left(D_{h,1-\theta_1}^{-1}\cdots D_{h,1-\theta_{\gamma-1}}^{-1} \right)\Pi_{1-\theta_{\gamma}} \partial_x^{q-1} w + \frac{\ip{w,1}}{\ip{1,1}}, & q = 2\gamma,\gamma>1,\\
	\left(D_{h,\theta_{\gamma}}^{-1} \cdots D_{h,\theta_1}^{-1}\right)D_{h,\theta_0}^{-1}\left(D_{h,1-\theta_1}^{-1}\cdots D_{h,1-\theta_{\gamma-1}}^{-1}\right) \Pi_{1-\theta_{\gamma}} \partial_x^{q-1} w + \frac{\ip{w,1}}{\ip{1,1}}, & q = 2\gamma+1, \gamma>1.
	\end{array}\right.
	\end{equation}
	Furthermore, $\nm{w - \Pi w} \leq C \nm{w}_{k+q}h^{k+1}$. In the case $q = 3$, $\Pi u(0)$ would retrieve the initial condition specified in \cite[page 86]{xu2012optimal}.
\end{REM}

\subsection{DG schemes for 1D scalar equations}

We adopt notations in Section \ref{sec-GLDG} in the following examples. 

\begin{examp}[DG method for advection equation with central flux]\label{examp-MSW}
\;
\upshape
Consider the scheme \eqref{eq-dg-adv} with $\theta = \hf$ for solving the advection equation \eqref{eq-adv}. Semiboundedness of $L_h = D_{h,\hf}$ has been verified in Proposition \ref{eq-dg-stab}. We set $\Pi = \Pio$ for the error estimate. 
\begin{PROP}\label{prop-L2}
	Let $\Pio$ be the $L^2$ projection to $V_{h}$.  Then 
	\begin{equation}
	\ip{\Pio w, v_h}_j = 		\ip{w, v_h}_j , \qquad \forall v_h  \in P^{k}(I_j), \qquad\forall j,
	\end{equation}
	and we have
	\begin{equation}\label{eq-L2-proj}
	\nm{w - \Pio w } + \sqrt{h\sum_j \left(w -( \Pio  w)^\pm \right)_{j+\hf}^2 }\leq Ch^{k+1}.
	\end{equation}
	Here $C$ depends on the $(k+1)$th Sobolev norm of $w$.
\end{PROP}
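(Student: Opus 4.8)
The plan is to exploit that $V_h$ consists of \emph{discontinuous} piecewise polynomials, so that the global $L^2$ projection decouples into independent cellwise projections. First I would observe that for any $v_h \in P^k(I_j)$, extending it by zero outside $I_j$ produces an element of $V_h$, and the defining relation $\ip{w - \Pio w, v_h} = 0$ then collapses to $\ip{w - \Pio w, v_h}_j = 0$. This is exactly the first displayed identity, and it shows that $(\Pio w)|_{I_j}$ is the $L^2(I_j)$-orthogonal projection of $w|_{I_j}$ onto $P^k(I_j)$; in particular $\Pio w$ minimizes the $L^2(I_j)$ error over all of $P^k(I_j)$.

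For the volume term I would estimate cell by cell. By the optimality just noted, $\nm{w - \Pio w}_{L^2(I_j)} \le \nm{w - \tilde p}_{L^2(I_j)}$ for every $\tilde p \in P^k(I_j)$; choosing $\tilde p$ to be a polynomial approximant of $w$ on $I_j$ (for instance an averaged Taylor polynomial), a standard scaling to the reference interval combined with the Bramble--Hilbert lemma and quasi-uniformity of the mesh gives $\nm{w - \Pio w}_{L^2(I_j)} \le C h^{k+1} |w|_{H^{k+1}(I_j)}$. Squaring and summing over $j$ yields $\nm{w - \Pio w} \le C h^{k+1}|w|_{H^{k+1}(\Omega)} \le C h^{k+1}\nm{w}_{k+1}$.

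For the interface term I would invoke a scaled (multiplicative) trace inequality on each cell: for $\phi \in H^1(I_j)$,
\[
|\phi(x_{j\pm\hf})|^2 \;\le\; C\left(h^{-1}\nm{\phi}_{L^2(I_j)}^2 + h\,|\phi|_{H^1(I_j)}^2\right),
\]
which follows from Young's inequality applied to the one-dimensional bound $\nm{\phi}_{L^\infty(I_j)}^2 \le C(h^{-1}\nm{\phi}_{L^2(I_j)}^2 + \nm{\phi}_{L^2(I_j)}|\phi|_{H^1(I_j)})$. Applying it with $\phi = w - \Pio w$ (which lies in $H^1(I_j)$ since $w$ is smooth and $(\Pio w)|_{I_j}$ is a polynomial), summing over $j$, and multiplying through by $h$ gives
\[
h\sum_j \left(w - (\Pio w)^\pm\right)_{j+\hf}^2 \;\le\; C\left(\nm{w - \Pio w}^2 + h^2\sum_j |w - \Pio w|_{H^1(I_j)}^2\right).
\]
The first term on the right has already been bounded by $C h^{2(k+1)}\nm{w}_{k+1}^2$, so it remains to show $|w - \Pio w|_{H^1(I_j)} \le C h^{k}|w|_{H^{k+1}(I_j)}$.

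The point requiring a little care — the main, though minor, obstacle — is that $\Pio$ is \emph{not} the best approximation in the $H^1$ seminorm, so this last estimate is not immediate from optimality. I would handle it in the standard way: with $\tilde p$ the polynomial from the volume step, write $|w - \Pio w|_{H^1(I_j)} \le |w - \tilde p|_{H^1(I_j)} + |\Pio w - \tilde p|_{H^1(I_j)}$; the first term is $O(h^k|w|_{H^{k+1}(I_j)})$ by Bramble--Hilbert, while the second is controlled via the inverse inequality on the finite-dimensional space $P^k(I_j)$, namely $|\Pio w - \tilde p|_{H^1(I_j)} \le C h^{-1}\nm{\Pio w - \tilde p}_{L^2(I_j)} \le C h^{-1}(\nm{\Pio w - w}_{L^2(I_j)} + \nm{w - \tilde p}_{L^2(I_j)}) \le C h^{k}|w|_{H^{k+1}(I_j)}$. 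Substituting back, squaring, summing over $j$, and taking a square root finishes the bound on the interface term, and hence the proposition.
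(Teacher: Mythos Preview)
Your argument is correct and is precisely the standard route to these projection estimates: cellwise decoupling of $\Pio$, Bramble--Hilbert for the $L^2$ volume term, the scaled trace inequality for the interface term, and the inverse-inequality trick to control the $H^1$ seminorm of the projection error. There is nothing to compare against here, since the paper states Proposition~\ref{prop-L2} as a well-known property of the $L^2$ projection and does not supply a proof; your write-up would serve perfectly well as the omitted argument.
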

Then it can be shown that
\begin{equation}
\begin{aligned}
	\ip{L_h\Pi w - L w,v_h} = &\; \sum_j \left(\widehat{\Pi w} - w\right)_{j+\hf} [v_h]_{j+\hf} \\
	\leq &\;  \sqrt{\hf \sum_j \left(\Pi w^--w\right)_{j+\hf}^2 +  \hf \sum_j \left(\Pi w^+- w\right)_{j+\hf}^2} \sqrt{\sum_j[v_h]^2_{j+\hf}}
	\leq  Ch^{k} \nm{v_h}.
	\end{aligned}
\end{equation}
Here we have used the approximation property \eqref{eq-L2-proj} and the inverse estimate $\sqrt{\sum_j[v_h]^2_{j+\hf}} \leq Ch^{-\hf} \nm{v_h}$ in the last step. Hence \eqref{eq-CS} holds with $k' = 0$.  \eqref{eq-CT} is implied by Proposition \ref{prop-L2}. 
Therefore, we have the following error estimate of the fully discrete scheme. 
\begin{equation}
\nm{u(t^{n})-u_h^n}\leq C_{\rm{E}}\left(\sigma(\tmu,t^n)+1\right)\left(h^{k} + \tau^p\right).
\end{equation}
The order degeneration with $\theta = \hf$ can also be observed numerically.
\end{examp}

\begin{examp}[Ultra-weak DG method for dispersive equation]\label{examp-CS3}
	\upshape
	We then consider the ultra-weak DG method \cite{cheng2008discontinuous} for the dispersive equation \eqref{eq-dispersive}.	The discrete operator $L_h$ is defined such that
	\begin{equation}\label{eq-DG-CS3}
	\ip{L_h w_h,v_h} = -\ip{w_h,\partial_{xxx}v_h} -\sum_j\left(\widehat{w_h}[v_h]-\widetilde{ (w_h)_x}[v_h]+\widecheck{ (w_h)_{xx}}[v_h]\right)_{j+\hf}, \forall w_h,v_h \in V_{h},
	\end{equation}
	where the numerical fluxes are 
	\begin{subequations}
		\begin{align}
		\widehat{w_h} = w_h^+, \qquad \widetilde{ (w_h)_x} = (w_h)_x^+, \qquad \widecheck{ (w_h)_{xx}} = (w_h)_{xx}^-.
		\end{align}
	\end{subequations}
	It has been verified in \cite[Section 3.1]{cheng2008discontinuous} that $L_h$ is semibounded with $\mu = 0$. For optimal error estimates, one has to apply the projection introduced in \cite[Section 2.4]{cheng2008discontinuous}.
	\begin{PROP}\label{prop-CS}
		Let $ k \geq 3$. There exists a uniquely defined $\Pi w \in V_{h}$ such that
		\begin{subequations}\label{eq-proj-CS3}
			\begin{align}
			\ip{\Pi w, v_h}_j =		\ip{w, v_h}_j , \qquad & \forall v_h\in P^{k-3}(I_j),\\
			\widehat{\Pi w}  =\widehat{w},\quad  \widetilde{ (\Pi w)_x}  = \widetilde{w_x} , \quad \widecheck{(\Pi w)_{xx}} = \widecheck{w_{xx}}, \quad\quad& \text{at } x = x_{j+\hf}, \quad \forall j.
			\end{align}
		\end{subequations}
		Furthermore, $\nm{w - \Pi w} \leq Ch^{k+1}$, where $C$ depends on $(k+1)$th order Sobolev norm of $w$. 
	\end{PROP}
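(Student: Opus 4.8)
The plan is to establish the proposition in three steps: (a) the $k+1$ conditions defining $\Pi w|_{I_j}$ involve only the restriction of $\Pi w$ to $I_j$, so on each cell the projection solves a square linear system; (b) the corresponding homogeneous system has only the trivial solution, which gives existence and uniqueness simultaneously; and (c) the bound $\nm{w-\Pi w}\le Ch^{k+1}\nm{w}_{k+1}$ via Bramble--Hilbert and scaling.

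For (a), observe that at $x_{j-\hf}$ the fluxes $\widehat{\Pi w}$ and $\widetilde{(\Pi w)_x}$ are right limits, i.e.\ the value and the first derivative of $(\Pi w)|_{I_j}$ at the left endpoint of $I_j$, while at $x_{j+\hf}$ the flux $\widecheck{(\Pi w)_{xx}}$ is a left limit, i.e.\ the second derivative of $(\Pi w)|_{I_j}$ at the right endpoint of $I_j$. Together with the $k-2$ interior moments against $P^{k-3}(I_j)$ this gives exactly $k+1=\dim P^k(I_j)$ conditions, all internal to $I_j$ (the remaining flux conditions at each interface are assigned to the neighbouring cell). This also explains the hypothesis $k\ge 3$, which is what makes the count match.

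For (b), suppose $p\in P^k(I_j)$ satisfies $\ip{p,v}_j=0$ for all $v\in P^{k-3}(I_j)$, $p(x_{j-\hf})=p_x(x_{j-\hf})=0$, and $p_{xx}(x_{j+\hf})=0$; we must show $p\equiv 0$. The crucial observation is that $p_{xxx}\in P^{k-3}(I_j)$, hence is admissible: integrating $\ip{p,p_{xxx}}_j=0$ by parts twice and inserting the three known boundary values leaves only the term $-\tfrac12(p_x(x_{j+\hf}))^2$, so $p_x(x_{j+\hf})=0$ as well. Then $p_x\in P^{k-1}(I_j)$ vanishes at $x_{j-\hf}$ and to second order at $x_{j+\hf}$, so $p_x=(x-x_{j-\hf})(x-x_{j+\hf})^2 r$ with $r\in P^{k-4}(I_j)$. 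Writing a generic $v\in P^{k-3}(I_j)$ as $V_x$ with $V\in P^{k-2}(I_j)$ and $V(x_{j+\hf})=0$, hence $V=(x-x_{j+\hf})W$ with $W\in P^{k-3}(I_j)$, one integration by parts converts $\ip{p,v}_j=0$ into $\int_{I_j}(x-x_{j-\hf})(x-x_{j+\hf})^3\,rW\,dx=0$ for all $W\in P^{k-3}(I_j)$; taking $W=r$ and using that $(x-x_{j-\hf})(x-x_{j+\hf})^3$ has a fixed sign on $I_j$ forces $r\equiv 0$, so $p_x\equiv 0$ and then $p\equiv 0$ from $p(x_{j-\hf})=0$. (For $k=3$ the factorisation already gives $p_x\equiv 0$ directly, since a polynomial of degree $\le 2$ cannot have three roots; alternatively one can expand $p$ in the Legendre basis $L_{k-2},L_{k-1},L_k$ on the reference cell and check that the resulting $3\times3$ system is nonsingular for every $k\ge 3$.)

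For (c), any $w\in P^k(I_j)$ trivially satisfies the defining relations for $\Pi w=w$, so by uniqueness $\Pi$ reproduces $P^k$ cellwise. Pulling back to the reference cell $[-1,1]$, the defining conditions are affine-covariant and, by (b), the local $(k+1)\times(k+1)$ system is invertible with a bound independent of the cell, so the reference projection is a bounded projection onto $P^k$ on $H^{k+1}([-1,1])$ (here $k\ge 3$ ensures the pointwise data of $w$, $w_x$, $w_{xx}$ are well defined). The Bramble--Hilbert lemma then bounds $w-\Pi w$ in $L^2$ by $|w|_{H^{k+1}}$ on the reference cell, and the usual scaling estimate transfers this to $\nm{w-\Pi w}_{L^2(I_j)}\le Ch^{k+1}|w|_{H^{k+1}(I_j)}$; summing over $j$ yields the claimed estimate. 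The main obstacle is step (b) --- the uniform non-degeneracy of the local homogeneous problem for all $k\ge 3$; testing with $p_{xxx}$ does most of the work, but the factorisation/sign argument (or the Legendre-determinant computation) is still needed to close it, and it is precisely this non-degeneracy that supplies the cell-independent bound on the reference projection used in (c).
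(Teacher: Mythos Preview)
Your proof is correct. The paper does not give its own proof of this proposition; it simply quotes the result from \cite[Section~2.4]{cheng2008discontinuous}. Your self-contained argument --- localizing to a single cell via the $+/+/-$ structure of the fluxes, killing the homogeneous system by testing with $p_{xxx}\in P^{k-3}$ to force $p_x(x_{j+\hf})=0$, then the factorization $p_x=(x-x_{j-\hf})(x-x_{j+\hf})^2 r$ combined with the signed-weight orthogonality to conclude $r\equiv 0$, and finally the standard Bramble--Hilbert plus affine scaling --- is complete and matches in spirit the original construction in Cheng--Shu. The only cosmetic remark is that ``integrating by parts twice'' in step~(b) is really one integration by parts together with $p_xp_{xx}=\tfrac12(p_x^2)_x$; the computation you record is exactly right.
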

	With $\Pi$ defined in \eqref{eq-proj-CS3}, It has been shown in \cite[Appendix A.2]{cheng2008discontinuous} that $L_h \Pi w =  \Pio L w$ for sufficiently smooth $w$. Hence $C_{\rm{S}} = 0$ and $k' = 1$. Moreover, \eqref{eq-CT} follows from the approximation property of $\Pi$. Therefore, we have
	\begin{equation}
	\nm{u(t^{n})-u_h^n}\leq C_{\rm{E}}\left(\sigma(\tmu,t^n)+1\right)\left(h^{k+1} + \tau^p\right).
	\end{equation}
\end{examp}

\subsection{DG schemes for 1D ``systems"}

In this section, we consider a class of schemes that solves a scalar equation by introducing auxiliary variables and rewriting the equation into a system. With these examples, one can get a glance on how the framework can be applied to equation systems. In the following examples, $V=	L^2(\Omega;\mathbb{R}^2)$. $V_h$ and $\Pi$ will be redefined for each scheme. 

\begin{examp}[DG method for wave equation with $\alpha\beta$-fluxes]\label{examp-CCLX}
	\upshape
	One way of solving the wave equation 	
	\begin{equation}
	\partial_{tt} u = \partial_{xx} u,
	\end{equation} 
	is to first rewrite the equation into a first-order system 
	\begin{equation}\label{eq-wave}
	\partial_t \left(
	\begin{matrix}
	u\\
	\phi
	\end{matrix}\right) + \partial_x A\left(		\begin{matrix}
	u\\
	\phi
	\end{matrix}\right) = 0,\qquad A = \left(\begin{matrix}
	0&1\\
	1&0 
	\end{matrix}\right) ,
	\end{equation}
	and then apply the DG discretization with suitable numerical fluxes. This scheme has been studied in \cite{cheng2017l2}. 
	The finite element space is taken as $V_h = [V_{h,1}]^2$ and the associated inner-product is defined through
	\begin{equation}
		\ip{\cdot,\cdot} = \ip{\cdot,\cdot}_j, \quad \ip{ \left(
			\begin{matrix}
			w\\
			\chi
			\end{matrix}\right), \left(
			\begin{matrix}
			v\\
			\psi
			\end{matrix}\right)}_j = \int_{I_j} wv + \chi \psi dx.
	\end{equation}	
	The discrete operator $L_h$ for approximating $L = - \partial_x A$ is given by
	\begin{equation}\label{eq-Lh-FS}
	\ip{L_h \left(
		\begin{matrix}
		w_h\\
		\chi_h
		\end{matrix}\right), \left(
		\begin{matrix}
		v_h\\
		\psi_h
		\end{matrix}\right)} = \ip{A\left(
		\begin{matrix}
		w_h\\
		\chi_h
		\end{matrix}\right), \partial_x\left(
		\begin{matrix}
		v_h\\
		\psi_h
		\end{matrix}\right)} + \sum_j  \left(A\widehat{\left(
		\begin{matrix}
		w_h\\
		\chi_h
		\end{matrix}\right)}\cdot  \left(
	\begin{matrix}
	\ [v_h] \\
	\ [\psi_h]
	\end{matrix}\right)
	\right)_{j+\hf},\quad \forall \left(
	\begin{matrix}
	w_h\\
	\chi_h
	\end{matrix}\right), \left(
	\begin{matrix}
	\ v_h \\
	\ \psi_h
	\end{matrix}\right) \in V_h.
	\end{equation}
	where 
	\begin{equation}
	\widehat{\left(
		\begin{matrix}
		w_h\\
		\chi_h
		\end{matrix}\right) } = {\left(
		\begin{matrix}
		\{w_h\} + \alpha[w_h] + \beta_1 [\chi_h]\\
		\{\chi_h\} -\alpha[\chi_h]+\beta_2 [w_h]
		\end{matrix}\right)}, \qquad \beta_1, \beta_2\leq 0.
	\end{equation}
	Furthermore, when $\alpha^2 + \beta_1\beta_2 = \frac{1}{4}$, such fluxes are referred to as $\alpha\beta$-fluxes. It has been shown in \cite[Theorem 2.2]{cheng2017l2} that $L_h$ is semibounded with $\mu = 0$ when $\beta_1, \beta_2 \leq 0$. For error estimates, the following operator is constructed (rephrased from \cite[Lemma 2.4]{cheng2017l2}).
	\begin{PROP}\label{prop-CCLX}
		There exists a uniquely defined $\Pi \left(
		\begin{matrix}
		w\\
		\chi
		\end{matrix}\right)\in V_h$, such that 
		\begin{align}
		\ip{\Pi \left(
			\begin{matrix}
			w\\
			\chi
			\end{matrix}\right), \left(
			\begin{matrix}
			v_h\\
			\psi_h
			\end{matrix}\right)}_j = 	&\;
		\ip{\left(
			\begin{matrix}
			w\\
			\chi
			\end{matrix}\right), \left(
			\begin{matrix}
			v_h\\
			\psi_h
			\end{matrix}\right)}_j,\qquad \forall  v_h, \psi_h \in P^{k-1}(I_j),\\
		\widehat{ \Pi \left(
			\begin{matrix}
			w\\
			\chi
			\end{matrix}\right)} =&\;  \widehat{\left(
		\begin{matrix}
		w\\
		\chi
		\end{matrix}\right)},\qquad\qquad \qquad ~\text{at } x = x_{j+\hf}, \quad \forall j.
		\end{align}
		Furthermore, 
		$\nm{\left(
			\begin{matrix}
			w\\
			\chi
			\end{matrix}\right) -\Pi \left(
			\begin{matrix}
			w\\
			\chi
			\end{matrix}\right) } \leq Ch^{k+1}
		$,
		where $C$ is a constant dependent on the $(k+1)$th order Sobolev norm of $w$ and $\chi$.
		\end{PROP}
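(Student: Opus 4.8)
The plan is to note first that the defining conditions form a \emph{square} linear system for $\Pi(w,\chi)\in V_h=[V_{h,1}]^2$: on each of the $N$ cells the moment conditions tested against $v_h,\psi_h\in P^{k-1}(I_j)$ contribute $2k$ equations, while the two scalar flux‑matching conditions at each of the $N$ interfaces contribute $2N$ more, for a total of $2N(k+1)=\dim V_h$. Hence it suffices to prove injectivity, i.e.\ that the homogeneous problem ($w=\chi=0$) has only the trivial solution; existence and uniqueness then follow simultaneously. This I would do cell by cell. If $(p_h,r_h)\in V_h$ satisfies $\ip{(p_h,r_h),(v_h,\psi_h)}_j=0$ for all $v_h,\psi_h\in P^{k-1}(I_j)$, then on each $I_j$ both components are $L^2(I_j)$‑orthogonal to $P^{k-1}(I_j)$, hence are scalar multiples $(c_j,d_j)\,\ell_j$ of the local degree‑$k$ Legendre polynomial $\ell_j$, normalized so that $\ell_j(x_{j+\hf}^-)=1$ (and thus $\ell_j(x_{j-\hf}^+)=(-1)^k$).

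Expanding the numerical flux, one finds at $x_{j+\hf}$ that $\widehat{(p_h,r_h)}_{j+\hf}=B^-(c_j,d_j)^{\top}+(-1)^k B^+(c_{j+1},d_{j+1})^{\top}$, where
\[
B^-=\begin{pmatrix}\hf-\alpha & -\beta_1\\ -\beta_2 & \hf+\alpha\end{pmatrix},\qquad
B^+=\begin{pmatrix}\hf+\alpha & \beta_1\\ \beta_2 & \hf-\alpha\end{pmatrix}.
\]
A short computation shows that the $\alpha\beta$‑flux identity $\alpha^2+\beta_1\beta_2=\tfrac14$ is precisely what forces
\[
B^++B^-=I,\qquad B^+B^-=B^-B^+=0,
\]
so that $B^+$ and $B^-$ are complementary idempotents. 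For the homogeneous problem the flux conditions read $B^-(c_j,d_j)^{\top}+(-1)^k B^+(c_{j+1},d_{j+1})^{\top}=0$ for every $j$; multiplying this relation on the left successively by $B^+$ and by $B^-$ and using the identities above yields $B^+(c_j,d_j)^{\top}=0$ and $B^-(c_j,d_j)^{\top}=0$ for every $j$, and adding these gives $(c_j,d_j)=0$. Hence $(p_h,r_h)\equiv 0$, so $\Pi$ is well defined. (This argument uses neither periodicity in an essential way nor the signs $\beta_1,\beta_2\le0$.)

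For the approximation bound, split $(w,\chi)-\Pi(w,\chi)=(I-\Pio)(w,\chi)+\theta_h$ with $\theta_h:=\Pio(w,\chi)-\Pi(w,\chi)\in V_h$; the first term is $O(h^{k+1})$ by Proposition~\ref{prop-L2}. Since $\Pio$ and $\Pi$ both reproduce the $P^{k-1}$ moments of $(w,\chi)$ on every cell, $\theta_h|_{I_j}=(\gamma_j,\delta_j)\,\ell_j$, and by linearity of the flux together with $\widehat{\Pi(w,\chi)}=\widehat{(w,\chi)}$ one has $\widehat{\theta_h}_{j+\hf}=\widehat{(\Pio-I)(w,\chi)}_{j+\hf}=:f_{j+\hf}$, which is $O(h^{k+1})$ by the standard estimate for the $L^2$‑projection error of a smooth function evaluated at the cell interfaces. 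Running the same algebra with this right‑hand side gives, cell by cell, $(\gamma_j,\delta_j)^{\top}=B^-f_{j+\hf}+(-1)^k B^+f_{j-\hf}$, so $|(\gamma_j,\delta_j)|\le C h^{k+1}$ with $C$ independent of $h$; since $\nm{\ell_j}_{L^2(I_j)}^2=O(h_j)$, summing over $j$ yields $\nm{\theta_h}\le C h^{k+1}$, and the triangle inequality finishes the estimate, with $C$ depending on $\nm{w}_{k+1}$ and $\nm{\chi}_{k+1}$.

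The only genuinely non‑routine point is recognizing and exploiting the idempotent structure $B^++B^-=I$, $B^+B^-=B^-B^+=0$ that the $\alpha\beta$‑flux condition $\alpha^2+\beta_1\beta_2=\tfrac14$ produces: with it, the interface system decouples completely into independent $2\times2$ blocks, and both well‑definedness and the optimal rate follow from standard generalized‑projection machinery (compare Lemma~\ref{lem-MSW} and Proposition~\ref{prop-L2}, and \cite{cheng2017l2}); without it one would be forced to analyze the coupled flux system globally (e.g.\ via a transfer‑matrix or circulant argument), which is considerably more delicate.
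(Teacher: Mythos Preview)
The paper does not prove this proposition; it simply quotes it from \cite[Lemma~2.4]{cheng2017l2}. Your argument is correct and clean for the exact $\alpha\beta$ case $\alpha^2+\beta_1\beta_2=\tfrac14$: the computation $B^+B^-=(\tfrac14-\alpha^2-\beta_1\beta_2)I$ shows that precisely this condition makes $B^\pm$ complementary idempotents, and once the interface system decouples the rest is standard generalized Gauss--Radau machinery. One small remark: periodicity does matter in the uniqueness step---from the interface relation at $x_{j+\hf}$ you extract $B^-(c_j,d_j)^\top=0$ and $B^+(c_{j+1},d_{j+1})^\top=0$, and you need the wrap-around interface to supply the missing $B^+(c_1,d_1)^\top=0$ and $B^-(c_N,d_N)^\top=0$.

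The more substantive point is scope. The paper subsequently applies this projection in the perturbed case $\alpha^2+\beta_1\beta_2=\tfrac14+Ch^\delta$ (the second line of \eqref{eq-errCCLX}), where $B^+B^-=-Ch^\delta I\neq0$ and your local decoupling fails. In \cite{cheng2017l2} the projection is in fact established for general flux parameters, via the global coupled analysis you mention in your closing paragraph (essentially a vector-valued analogue of the argument behind Lemma~\ref{lem-MSW} for $\Pi_\theta$ with $\theta\neq\hf$). So your proof handles the headline $\alpha\beta$ case elegantly, but does not by itself cover the full range of parameters the paper actually uses; for that you would need to carry out the transfer-matrix/circulant argument you allude to at the end.
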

		By looking into the proof of \cite[Theorem 2.5, Theorem 2.6]{cheng2017l2}, it can be shown that
		\begin{equation}
		\ip{L_h \Pi \left(
			\begin{matrix}
			w\\
			\chi
			\end{matrix}\right) - L\left(
			\begin{matrix}
			w\\
			\chi
			\end{matrix}\right) ,\left(
			\begin{matrix}
			v_h\\
			\psi_h
			\end{matrix}\right)} = 0, \qquad \alpha^2 + \beta_1\beta_2 = \frac{1}{4},
		\end{equation}
		\begin{equation}
		\left|\ip{L_h \Pi \left(
			\begin{matrix}
			w\\
			\chi
			\end{matrix}\right) - L\left(
			\begin{matrix}
			w\\
			\chi
			\end{matrix}\right) ,\left(
			\begin{matrix}
			v_h\\
			\psi_h
			\end{matrix}\right)} \right|\leq C h^{k+\delta} \nm{\left(
			\begin{matrix}
			v_h\\
			\psi_h
			\end{matrix}\right)}, \qquad \alpha^2 + \beta_1\beta_2 = \frac{1}{4} + C h^\delta.
		\end{equation}
	Therefore, we obtain the error estimate for the fully discrete scheme. 
	\begin{equation}\label{eq-errCCLX}
	\nm{ \left(
		\begin{matrix}
		u(t^n)\\
		\phi(t^n)
		\end{matrix}\right) - \left(
		\begin{matrix}
		u_h^n\\
		\phi_h^n
		\end{matrix}\right) }\leq 
	\left\{
	\begin{matrix}
		 C_{\rm{E}}\left(\sigma(\tmu,t^n)+1\right)\left(h^{k+1} + \tau^p \right), & \alpha^2 + \beta_1 \beta_2 = \frac{1}{4},\\
		 C_{\rm{E}}\left(\sigma(\tmu,t^n)+1\right)\left(h^{k+\min(\delta,1)} + \tau^p \right), & \alpha^2 + \beta_1 \beta_2 = \frac{1}{4} + Ch^\delta .
	\end{matrix}\right.
	\end{equation}
	
	
\end{examp}

\begin{examp}[Energy conserving DG method for conservation laws]\label{examp-FS}
	\upshape
	This example comes from \cite{fu2019optimal}. To solve the advection equation \eqref{eq-adv}, the authors introduced an auxiliary unknown $\phi = 0$ and solved the following augmented system \eqref{eq-FuShu} with the DG method
	\begin{equation}\label{eq-FuShu}
	\partial_t \left(
	\begin{matrix}
	u\\
	\phi
	\end{matrix}\right) + \partial_x B \left(		\begin{matrix}
	u\\
	\phi
	\end{matrix}\right) = 0, \qquad B = \left(\begin{matrix}
	1&0\\
	0&-1 
	\end{matrix}\right).
	\end{equation}
	This scheme achieves optimal convergence rate while conserving the total energy $\int_\Omega u_h^2 + \phi_h^2 dx$ ($\ip{L_hv_h,v_h} = 0, \forall v_h \in V_h$. See \cite[Corollary 2.4]{fu2019optimal}). The setting of the scheme is similar to that in Example \ref{examp-CCLX}, except for replacing the matrix $A$ with $B$ and requiring $\alpha = 0$, $\beta_1 = \beta_2 = \hf$. Following arguments in Example \ref{examp-CCLX}, the fully discrete scheme has the error estimate.
	\begin{equation}
			\nm{u(t^n) - u_h^n}\leq \nm{ \left(
			\begin{matrix}
			u(t^n)\\
			0
			\end{matrix}\right) - \left(
			\begin{matrix}
			u_h^n\\
			\phi_h^n
			\end{matrix}\right) }\leq  C_{\rm{E}}\left(\sigma(\tmu,t^n)+1\right)\left(h^{k+1} + \tau^p \right).
	\end{equation}
	
	In \cite{fu2019optimal}, the authors also considered hyperbolic symmetric systems. The main idea is to use the eigendecomposition to decouple the system into scalar equations, and then introduce auxiliary unknowns to pair up the equations. One can use the projection in Proposition \ref{prop-CCLX} for each pair to show the optimal convergence. Details are omitted. 
	
\end{examp}

\begin{examp}[Central DG method for advection equation]
	\upshape
The central DG method was proposed by Liu et al. \cite{liu2007central} for solving hyperbolic conservation laws. Its application to \eqref{eq-adv} can be considered as applying the DG discretization to the system \eqref{eq-cdg} with an auxiliary unknown $\phi = u$  on overlapping meshes.
\begin{equation}\label{eq-cdg}
	\partial_t \left(\begin{matrix}
	u\\
	\phi
	\end{matrix}\right) + \partial_x A\left(\begin{matrix}
	u\\
	\phi
	\end{matrix}\right) = \frac{1}{\tau_{\max}}\left(\begin{matrix}\phi - u\\u - \phi\end{matrix}\right).
\end{equation}
Here $A$ is defined in \eqref{eq-wave} and $\tau_{\max}= \lambda h$ is the maximum time step size. We denote by $I_{j+\hf} = [x_j, x_{j+1}]$ and 
\begin{equation}
\widetilde{V}_{h,1} = \{\psi_h \in L^2(\Omega): \psi_h|_{I_{j+\hf}} \in P^k(I_{j+\hf}), \forall j \}.
\end{equation} 
Then the finite element space for the central DG method is given by
\begin{equation}
	V_h = V_{h,1}\times \widetilde{V}_{h,1} = \left\{\left(\begin{matrix}
	v_h\\
	\psi_h
	\end{matrix}\right): v_h|_{j} \in P^k(I_j), \psi_h|_{I_{j+\hf}}\in P^k(I_{j+\hf})\right\}.
\end{equation}
With 
\begin{equation}
\ip{ \left(\begin{matrix}
w_h\\
\chi_h
\end{matrix}\right), \left(\begin{matrix}
v_h\\
\psi_h
\end{matrix}\right)}_j = \int_{I_j} w_hv_h dx + \int_{I_{j+\hf}}\chi_h\psi_h dx, \qquad \ip{\cdot,\cdot} = \sum_j\ip{\cdot,\cdot}_j,
\end{equation}
the discrete operator is defined as follows.
\begin{equation}
	\begin{aligned}
	\ip{L_h\left(\begin{matrix}
		w_h\\
		\chi_h
		\end{matrix}\right), \left(\begin{matrix}
		v_h\\
		\psi_h
		\end{matrix}\right)} = &\;\frac{1}{\tau_{\max}} \ip{\left(\begin{matrix}
		\chi_h-w_h\\
		w_h - \chi_h
		\end{matrix}\right), \left(\begin{matrix}
		v_h\\
		\psi_h
		\end{matrix}\right)} + \ip{A\left(\begin{matrix}
		w_h\\
		\chi_h
		\end{matrix}\right), \partial_x\left(\begin{matrix}
		v_h\\
		\psi_h
		\end{matrix}\right)} \\
		&\;+ \sum_j \left(\left(\chi_h[v_h]\right)_{j+\hf} + \left(w_h[\psi_h]\right)_j\right), \qquad \forall 
		\left(\begin{matrix}
		w_h\\
		\chi_h
		\end{matrix}\right),
		\left(\begin{matrix}
		v_h\\
		\psi_h
		\end{matrix}\right)\in V_h.
	\end{aligned}
\end{equation}
For optimal error estimates, Liu et al. designed the projection in \cite[Lemma 2.1]{liu2018optimal} using the shifting technique. We refer to their paper to save space. Then by following arguments in \cite[page 526]{liu2018optimal}, one can verify the following superconvergence result. 
\begin{equation}
	\left|\ip{L_h \Pi \left(
	\begin{matrix}
	w\\
	w
	\end{matrix}\right) - L\left(
	\begin{matrix}
	w\\
	w
	\end{matrix}\right) ,\left(
	\begin{matrix}
	v_h\\
	\psi_h
	\end{matrix}\right)} \right|\leq C h^{k+1} \nm{\left(
	\begin{matrix}
	v_h\\
	\psi_h
	\end{matrix}\right)}.
\end{equation}
 Here $C$ is a constant dependent on the $(k+2)$th Sobolev norm of $w$. The approximation property \eqref{eq-CT} also holds for the constructed projection. Then from Corollary \ref{cor-err}, we have 
\begin{equation}
	\nm{\left(
		\begin{matrix}
		u\\
		u
		\end{matrix}\right) - \left(
		\begin{matrix}
		u_h\\
		\phi_h
		\end{matrix}\right)} \leq C_{\rm{E}} \left(\sigma(\tmu,t^n)+1\right)\left(h^{k+1} + \tau^p\right).
\end{equation}
\end{examp}
\subsection{DG schemes for conservation laws on 2D Cartesian meshes}
\begin{examp}[$Q^k$-DG method]\label{examp-QkDG}
	\upshape
We consider 2D linear scalar conservation laws on rectangular domain $\Omega$ with periodic boundary conditions. 
\begin{equation}\label{eq-2dadv}
	\partial_t u + \left(\partial_{x^1} + \partial_{x^2}\right)u = 0, 	\qquad x = (x^1,x^2) \in \Omega \subset \mathbb{R}^2.
\end{equation}
A quasi-uniform Cartesian mesh is used for discretizing $\Omega = \cup_{j^1,j^2} I_{j^1}\times I_{j^2}$. For the DG discretization, the finite element space is chosen as
\begin{equation}
V_h = V_{h,1}\otimes V_{h,1} = \{v_h\in L^2(\Omega): v_h|_{I_{j^1} \times I_{j^2}} \in Q^k(I_{j^1}\times I_{j^2}),\forall j^1,j^2\}.
\end{equation}
Here $Q^k(I_{j^1}\times I_{j^2}) = P^k(I_{j^1})\otimes P^k(I_{j^2})$.

The DG operator for discretizing $L =  -\left(\partial_{x^1} + \partial_{x^2}\right)$  is defined as follows. 
\begin{equation}\label{eq-DG-2D}
\begin{aligned}
\ip{L_{h} w_h,v_h } = &\; \sum_{j^1,j^2} \int_{I_{j^1}}\int_{I_{j^2}}  w_h \left(\partial_{x^1} +  \partial_{x^2}\right)v_h dx + \sum_{j^1,j^2}\left( \int_{I_{j^2}}\widehat{ w_h}_{j^1+\hf,x^2}^{\theta_1}[v_h]_{j^1+\hf,x^2} dx^2 \right. \\
 &\;+ \left. \int_{I_{j^1}} \widehat{ w_h}_{x^1,j^2+\hf}^{\theta_2}[v_h]_{x^1,j^2+\hf} dx^1\right), \quad \forall w_h,v_h \in V_h.
\end{aligned}
\end{equation}
Here $\theta_1, \theta_2 >\hf$ and 
\begin{eqnarray}
	\widehat{ w_h}_{j^1+\hf,x^2}^{\theta_1} = \theta_1 w_h(x^{1,-}_{j^1+\hf},x^2) +  (1-\theta_1) w_h(x^{1,+}_{j^1+\hf},x^2), \\
	\widehat{ w_h}_{x^1,j^2+\hf}^{\theta_2} = \theta_2 w_h(x^1,x^{2,-}_{j^2+\hf}) +  (1-\theta_2) w_h(x^1,x^{2,+}_{j^2+\hf}).
\end{eqnarray}
It can be shown that $\ip{L_h v_h, v_h} \leq 0$ \cite[Proposition 3.1]{meng2016optimal}. The required projection was first constructed in \cite[Lemma 3.3]{meng2016optimal}, with the regularity assumption improved in \cite[Lemma 3.3]{cheng2017application}.
\begin{PROP}\label{lem-MSW2}
	There exists a uniquely defined $\Pi_{\theta_1,\theta_2}w\in V_h$, such that 
	\begin{eqnarray}
		\int_{I_{j^1}}\int_{I_{j^2}}  \Pi_{\theta_1,\theta_2} w v dx^1dx^2 &=& \int_{I_{j^1}}\int_{I_{j^2}} w v dx^1dx^2, \forall v_h \in Q^{k-1}(I_{j^1}\times I_{j^2}),\\
		\int_{I_{j^2}} (\widehat{ \Pi_{\theta_1,\theta_2}w })_{j^1+\hf,x^2}^{\theta_1}v_h dx^2  &=& 	\int_{I_{j^2}} \widehat{ w }_{j^1+\hf,x^2}^{\theta_1} v_h dx^2, \qquad \forall v \in P^{k-1}(I_{j^2}),\\
		\int_{I_{j^1}} (\widehat{ \Pi_{\theta_1,\theta_2}w })_{x_1,j^2+\hf}^{\theta_2}v_h dx^1  &=&		\int_{I_{j^1}} \widehat{w }_{x_1,j^2+\hf}^{\theta_2}v_h dx^1  , \qquad \forall v \in P^{k-1}(I_{j^1}),\\
		\widehat{\Pi_{\theta_1,\theta_2} w}^{\theta_1,\theta_2}   &=& 	\widehat{w}^{\theta_1,\theta_2}  , \qquad x = (x^1_{j^1+\hf}, x^2_{j^2+\hf}),
		 \qquad \forall j^1,j^2.
	\end{eqnarray}
	Here
	\begin{equation}
	\widehat{w_h}^{\theta_1,\theta_2}  = \theta_1\theta_2w_h^{-,-} + \theta_1(1-\theta_2)w_h^{-,+} + \theta_2(1-\theta_1)w_h^{+,-}  
	+ (1-\theta_1)(1-\theta_2)w_h^{+,+}.
	\end{equation}
	Moreover, 
	$\nm{w - \Pi_{\theta_1,\theta_2} w} \leq Ch^{k+1}
	$,
	where $C$ depend on $(k+1)$th order Sobolev norm of $w$. 
\end{PROP}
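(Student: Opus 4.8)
The plan is to realise $\Pi_{\theta_1,\theta_2}$ as the tensor product of the two one‑dimensional projections of Lemma~\ref{lem-MSW}, one acting in each coordinate direction, and then to read off the four defining identities and the approximation bound by tensorisation. Concretely, for sufficiently smooth $w$ I would set $\Pi_{\theta_1,\theta_2}w$ to be the result of applying $\Pi_{\theta_1}$ in the $x^1$‑variable with $x^2$ held as a parameter and then applying $\Pi_{\theta_2}$ in the $x^2$‑variable with $x^1$ held as a parameter. These two operations act on different variables, hence commute; and since each of $\Pi_{\theta_1},\Pi_{\theta_2}$ lands in $P^k$ cellwise in its own variable, the result lies in $V_{h,1}\otimes V_{h,1}=V_h$. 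Note also the algebraic identity $w-\Pi_{\theta_1,\theta_2}w=(I-\Pi_{\theta_1})w+\Pi_{\theta_1}(I-\Pi_{\theta_2})w=(I-\Pi_{\theta_1})w+(I-\Pi_{\theta_2})(\Pi_{\theta_1}w)$, which will be used repeatedly.

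For the defining identities I would invoke the two structural properties of the 1D operator from Lemma~\ref{lem-MSW}: $\Pi_\theta$ reproduces moments against $P^{k-1}$ in its variable, and it reproduces the $\theta$‑weighted trace $\theta(\cdot)^-+(1-\theta)(\cdot)^+$ at every cell endpoint. Testing $w-\Pi_{\theta_1,\theta_2}w$ against a product function $v_1(x^1)v_2(x^2)$ with $v_1,v_2\in P^{k-1}$ (such products spanning $Q^{k-1}$ by bilinearity) and using the splitting above, the first summand integrates to zero in $x^1$ and the second in $x^2$, which gives the cellwise moment relation. For the vertical‑edge relation I would observe that the $\theta_1$‑weighted $x^1$‑trace of $\Pi_{\theta_1,\theta_2}w$ at $x^1_{j^1+\hf}$, viewed as a function of $x^2$, equals $\Pi_{\theta_2}$ applied to the $x^2$‑function $\widehat{w}^{\theta_1}_{j^1+\hf,x^2}$; pairing with $P^{k-1}(I_{j^2})$ and using the moment property of $\Pi_{\theta_2}$ yields the stated identity, and the horizontal‑edge relation is symmetric. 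Finally, applying the trace‑reproduction property of $\Pi_{\theta_1}$ in $x^1$ and of $\Pi_{\theta_2}$ in $x^2$ successively to the four corner traces gives $\widehat{\Pi_{\theta_1,\theta_2}w}^{\theta_1,\theta_2}=\widehat{w}^{\theta_1,\theta_2}$ at each mesh corner.

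Uniqueness I would get from a Kronecker‑product argument. A degree‑of‑freedom count shows that the number of linear constraints in Proposition~\ref{lem-MSW2} equals $\dim V_h$, so it suffices that the homogeneous problem admits only the trivial solution. In the tensor basis $\{\phi_{j^1,\ell^1}(x^1)\phi_{j^2,\ell^2}(x^2)\}$ of $V_h$, and after grouping the constraints by direction (interior moments versus edge/corner fluxes), the assembled 2D constraint matrix is exactly $M^{(\theta_1)}\otimes M^{(\theta_2)}$, where $M^{(\theta)}$ is the matrix of the 1D problem of Lemma~\ref{lem-MSW}; each factor is invertible precisely because $\theta_i\neq\hf$, hence so is the product. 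This simultaneously re‑establishes existence, though the explicit tensor‑product construction of the first paragraph is more convenient for the error estimate.

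For the approximation estimate I would again use $w-\Pi_{\theta_1,\theta_2}w=(I-\Pi_{\theta_1})w+(I-\Pi_{\theta_2})(\Pi_{\theta_1}w)$: applying the 1D estimate of Lemma~\ref{lem-MSW} in $x^1$ for each fixed $x^2$ and integrating bounds $\nm{(I-\Pi_{\theta_1})w}$ by $Ch^{k+1}$ times the $(k+1)$th Sobolev norm of $w$, and for the second summand I would use the standard uniform‑in‑$h$ $L^2$‑stability of $\Pi_{\theta_1}$ (valid for $\theta_1\neq\hf$) together with the fact that $\Pi_{\theta_1}$ commutes with $\partial_{x^2}$, followed once more by the 1D estimate, now in $x^2$, to obtain the same bound. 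The genuine analytical content sits entirely in the 1D Lemma~\ref{lem-MSW}, which is assumed; accordingly I expect the main obstacle to be purely organisational — correctly matching the 2D constraint system to the Kronecker product $M^{(\theta_1)}\otimes M^{(\theta_2)}$, and keeping the edge‑ and corner‑flux indices straight in the verification of the four identities — rather than any new difficulty.
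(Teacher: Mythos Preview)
Your proposal is correct and follows the standard tensor-product route that the cited references take; note, however, that the paper itself does not supply a proof of Proposition~\ref{lem-MSW2} but simply quotes it from \cite[Lemma~3.3]{meng2016optimal} and \cite[Lemma~3.3]{cheng2017application}. Your construction $\Pi_{\theta_1,\theta_2}=\Pi_{\theta_1}^{x^1}\otimes\Pi_{\theta_2}^{x^2}$, the verification of the four constraint families via the 1D moment and trace properties, the Kronecker-product uniqueness argument, and the error bound via the splitting $w-\Pi_{\theta_1,\theta_2}w=(I-\Pi_{\theta_1})w+(I-\Pi_{\theta_2})(\Pi_{\theta_1}w)$ are precisely the ingredients used in those papers, so there is nothing materially different to compare.
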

The following superconvergence result holds for the projection operator \cite[Lemma 3.6]{cheng2017application}.
\begin{equation}
	|\ip{L_h\Pi w - L w,v_h}|\leq Ch^{k+1} \nm{v_h}.
\end{equation}
Here $C$ depends on $(k+2)$th order Sobolev norm of $w$. 
With the approximation property in Proposition \ref{lem-MSW2}, one can obtain the following error estimate for the fully discrete scheme
\begin{equation}
\nm{u(t^n) - u_h^n }\leq 	C_{\rm{E}}(\sigma(\tmu,t^n)+1)\left( h^{k+1} + \tau^p\right).
\end{equation}

\end{examp}
\begin{examp}[$P^k$-DG method]
	\upshape
	We now consider the DG discretization of \eqref{eq-2dadv} with $P^k$ elements. The settings are similar as that in Example \ref{examp-QkDG}, except for the finite element space replaced with 
	\begin{equation}
		V_h = \{v_h\in L^2(\Omega): v_h|_{I_{j^1} \times I_{j^2}} \in P^k(I_{j^1}\times I_{j^2}),\forall j^1,j^2\}.
	\end{equation}
	Here $P^k(I_{j^1}\times I_{j^2})$ is the space of polynomials of no more than $k$ on $I_{j^1}\times I_{j^2}$. We consider the upwind scheme with $\theta_1 = \theta_2 = 1$. The stability follows closely with that in the previous example. For optimal error estimates, Liu et al. designed the following operator in \cite[Section 2.2.1]{liu2020optimal}.
	\begin{PROP} There exists a uniquely defined $\Pi w \in V_h$, such that
	\begin{align}
		\int_{I_{j^1}}\int_{I_{j^2}} \Pi w dx^1 dx^2 =&\; 		\int_{I_{j^1}}\int_{I_{j^2}} w dx^1 dx^2,\\
		\widetilde{\Pi_h}(\Pi w,v_h)_{j^1,j^2} =&\; 		\widetilde{\Pi_h}(w,v_h)_{j^1,j^2}, \qquad \forall v_h\in P^k(I_{j^1}\times I_{j^2}),\qquad \forall j^1,j^2,
	\end{align}		
	where $\widetilde{\Pi_h}(w,v_h)_{j^1,j^2}$ is defined as follows.
	\begin{equation}
	\begin{aligned}
		\widetilde{\Pi_h}(w,v_h)_{j^1,j^2} =&\; -\int_{I_{j^1}}\int_{I_{j^2}}{w(\partial_{x^1}+\partial_{x^2})v_h}dx^1dx^2 \\
		&\;+ \int_{I_{j^1}}w(x^1,x_{j^2+\hf}^{2,-})\left(v_h(x^1,x_{j^2+\hf}^{2,-})-v_h(x^1,x_{j^2-\hf}^{2,+})\right)dx^1\\
		&\; + \int_{I_{j^1}}w(x_{j^1+\hf}^-,x_{j^2})\left(v_h(x_{j^1+\hf}^-,x^2)-v_h(x_{j^1-\hf}^+,x^2)\right)dx^2.
	\end{aligned}
	\end{equation}
	Furthermore, $\nm{w-\Pi w}\leq Ch^{k+1}$, where $C$ depends on $(k+1)$th order Sobolev norm of $w$.
	\end{PROP}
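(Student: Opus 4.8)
The plan is to prove well-posedness cell by cell and then obtain the $L^2$ bound by a Bramble--Hilbert scaling argument. The starting point is the observation that, for a fixed cell $K=I_{j^1}\times I_{j^2}$, every quantity entering $\widetilde{\Pi_h}(\Pi w,v_h)_{j^1,j^2}$ --- the volume term and the two edge integrals --- involves only the restriction of $\Pi w$ to $K$ (the one-sided limits appearing there are taken from inside $K$, and $w$ itself is continuous). Hence the defining relations decouple over cells, and on each $K$ the unknown $\Pi w|_K\in P^k(K)$ has $\dim P^k(K)$ free coefficients subject to $\dim P^k(K)$ linear conditions: the cell average, together with $\widetilde{\Pi_h}(\Pi w,v_h)_K=\widetilde{\Pi_h}(w,v_h)_K$ for $v_h$ in a basis of $P^k(K)$ that contains the constant function. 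Since $\widetilde{\Pi_h}(w,1)_K=0$ for every $w$, the equation with $v_h=1$ is vacuous, so the count is exactly balanced and it suffices to show the homogeneous problem has only the trivial solution.

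So I would take $p\in P^k(K)$ with zero mean on $K$ and $\widetilde{\Pi_h}(p,v_h)_K=0$ for all $v_h\in P^k(K)$, and first test with $v_h=p$. Integrating the volume term by parts and collecting the edge contributions yields the nonnegative identity
\[
\widetilde{\Pi_h}(p,p)_K=\frac12\int_{I_{j^1}}\bigl(p(x^1,x^2_{j^2+\hf})-p(x^1,x^2_{j^2-\hf})\bigr)^2dx^1+\frac12\int_{I_{j^2}}\bigl(p(x^1_{j^1+\hf},x^2)-p(x^1_{j^1-\hf},x^2)\bigr)^2dx^2,
\]
so $p$ must have identical traces on the two horizontal edges of $K$ and identical traces on the two vertical edges. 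Using these trace identities to integrate by parts once more, the boundary terms cancel and $\widetilde{\Pi_h}(p,v_h)_K$ reduces to $\int_K(\partial_{x^1}+\partial_{x^2})p\,v_h\,dx$; choosing $v_h=(\partial_{x^1}+\partial_{x^2})p\in P^{k-1}(K)\subset P^k(K)$ forces $(\partial_{x^1}+\partial_{x^2})p\equiv 0$ on $K$. Thus $p$ depends only on $x^1-x^2$, and the equality of its traces on opposite edges (a polynomial that is periodic on an interval is constant) makes $p$ constant; the zero-mean condition then gives $p=0$. This establishes existence and uniqueness of $\Pi w$.

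For the approximation estimate, uniqueness immediately yields polynomial reproduction: if $w\in P^k(K)$ then $\Pi w|_K=w|_K$, since $w$ itself satisfies the defining relations. I would then pull $K$ back to a reference square by an affine map; because the volume term and the edge integrals in $\widetilde{\Pi_h}$ scale by the same power of $h$, the defining conditions are scale-covariant and the reference operator $\widehat\Pi$ is a fixed linear map that is bounded $H^{k+1}\to P^k$ (its value is obtained by composing finitely many functionals of $\widehat w$ --- a mean together with trace and volume integrals, all bounded on $H^{k+1}$ for $k+1\ge 1$ --- with the inverse of a fixed invertible matrix) and reproduces $P^k$. The Bramble--Hilbert lemma then gives $\nm{\widehat w-\widehat\Pi\widehat w}_{L^2}\le C|\widehat w|_{H^{k+1}}$ on the reference cell; scaling back and summing over cells yields $\nm{w-\Pi w}\le Ch^{k+1}$ with $C$ controlled by the $(k+1)$st Sobolev norm of $w$.

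The main obstacle is the kernel computation in the second paragraph: deriving the nonnegative quadratic identity for $\widetilde{\Pi_h}(p,p)_K$ and then arguing that matching opposite-edge traces together with $(\partial_{x^1}+\partial_{x^2})p=0$ forces $p$ to be constant. Everything else --- the dimension count, the boundedness and polynomial reproduction of $\widehat\Pi$, and the Bramble--Hilbert step --- is routine, with the only caveat that the edge integrals require $w$ to possess well-defined traces, which holds for $w\in H^{k+1}$.
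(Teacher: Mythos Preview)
Your argument is correct. The paper itself does not give a proof of this proposition; it simply cites Liu, Shu, and Zhang (\cite{liu2020optimal}, Section~2.2.1), where the operator is constructed. Your approach---local dimension count plus a kernel argument based on the energy identity for $\widetilde{\Pi_h}(p,p)_K$, followed by a Bramble--Hilbert scaling---is essentially the same strategy used in that reference, so there is nothing to contrast.

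One minor point worth making explicit: when you say the edge and volume terms ``scale by the same power of $h$'', you should note this relies on the cell being a Cartesian rectangle (so the affine map has a diagonal Jacobian and the two coordinate directions may be rescaled independently); for a quasi-uniform Cartesian mesh the reference-cell operator $\widehat\Pi$ depends on the aspect ratio of $K$, but this ratio is bounded above and below, and the inverse matrix depends continuously on it, so the constant remains uniform. Everything else---the decoupling over cells, the vacuousness of the $v_h=1$ equation, the trace-matching and transport-direction argument forcing $p$ constant---is exactly right.
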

	Following the lines in \cite[Section 2.2.3]{liu2020optimal}, it can be shown that
	\begin{equation}
		|\ip{L_h\Pi w - L w,v_h}|\leq Ch^{k+1} \nm{v_h}.
	\end{equation}
	Here $C$ depends on $(k+2)$th order Sobolev norm of $w$. With the approximation property above, the following fully discrete error estimate can be obtained. 
	\begin{equation}
	\nm{u(t^n) - u_h^n }\leq 	C_{\rm{E}}(\sigma(\tmu,t^n)+1)\left( h^{k+1} + \tau^p\right).
	\end{equation}	
\end{examp}
\subsection{FG schemes for symmetric hyperbolic systems}\label{sec-fs}
	We consider the linear symmetric hyperbolic system with the periodic boundary condition. 
	\begin{equation}\label{eq-fs}
	\partial_t u +  \sum_{i = 1}^d A_i \partial_{x_i}u = 0,\qquad  x \in \Omega = [0,2\pi]^d \subset \mathbb{R}^d.
	\end{equation}
	Here $\{A_i\}_{i=1}^d$ are $m\times m$ constant symmetric matrices. We have $L = - \sum_{i = 1}^d A_i \partial_{x_i}$ in the example. 
	We then consider FG spatial discretization. To be consistent with notations in the literature,  we switch the subscripts from $h$ to $N$ in this example. This discrete space is 
	\begin{equation}
	V_N = \{v_N\in L^2(\Omega;\mathbb{R}^m): v_N =  \sum_{|k| \leq N} a_k e^{\ii k\cdot x}\},
	\end{equation}
	and the discrete operator is
	$
	L_N  v_N =  - \sum_{i = 1}^d A_i \partial_{x_i} v_N = L v_N$. Semiboundedness of $L_h$ can be verified straightforwardly. 
	For the projection, we take $\Pi = \Pio$. Since $\{A_i\}_{i=1}^d$ are all constant matrices, we have $A_i^\top  v_N \in V_N$. By 
	using the fact that $\Pio \partial_{x_i} w = \partial_{x_i} \Pio w$, it can be shown that
	\begin{equation}
	\begin{aligned}
	\ip{L w, v_N} = &\; \ip{\sum_{i = 1}^dA_i \partial_{x_i}w,v_N} =\sum_{i=1}^d \ip{\partial_{x_i} w, A_i^\top  v_N} =  \sum_{i=1}^d \ip{\Pio \partial_{x_i} w, A_i^\top  v_N}\\
	=&\; \sum_{i=1}^d \ip{\partial_{x_i} \Pio w, A_i^\top  v_N}= \ip{\sum_{i = 1}^d A_i \partial_x\Pio w,v_N} = \ip{L\Pio w,v_N}.
	\end{aligned}
	\end{equation}
	Moreover, we have $\nm{w - \Pio w } \leq CN^{-k}$ if $w\in C_{\rm{p}}^{k}(\Omega;\mathbb{R}^m)$ (subscript $\mathrm{p}$ stands for periodic functions) and $\nm{w - \Pio w } \leq Ce^{-cN}$ if $w$ is analytic, which leads to the following error estimates.
	\begin{eqnarray}
	\nm{u(t^n) - u_N^n}\leq 
	\left\{
	\begin{matrix}
	C_{\rm{E}}(\sigma(\tmu,t^n)+1)\left(N^{-k} + \tau^{p}\right), & u(\cdot,t) \in  C_{\rm{p}}^{k}(\Omega;\mathbb{R}^m),\\
	C_{\rm{E}}(\sigma(\tmu,t^n)+1)\left(e^{-cN} + \tau^{p}\right), & u(\cdot,t) \text{ is analytic}.\\
	\end{matrix}\right.
	\end{eqnarray}

\section{Error estimates: another approach}\label{sec-ano}
\setcounter{equation}{0}
In this section, we consider a round-about argument. We assume there is an error estimate for 
the semidiscrete scheme at hand, and then analyze the error of the RK method by comparing the 
fully discrete solution with the semidiscrete solution, and finally obtain fully discrete error 
with the triangle inequality. This is a feasible approach, however a different type of 
operator $\Pi$ has to be constructed. 

\begin{ASSP}\label{assp-soft}
	\ 
\begin{enumerate}
	\item (Error estimates of method-of-lines scheme). $\nm{u(t) - u_h(t) } \leq E(t,h)$.
	\item There exists $\Pi u(0) \in V_h$, such that $\nm{L_h^{p+1}\Pi u(0)} \leq C_{\Pi}$.	
\end{enumerate}
\end{ASSP}

\begin{LEM}[Error estimates of the time integrator]\label{lem-err}
Under Assumption \ref{assp-rg}, Assumption \ref{assp-Lh}, Assumption \ref{assp-RK}, Assumption \ref{assp-rkacc} and Assumption \ref{assp-soft},  if $u_h^0 = \Pi u(0)$, then 
	\begin{equation}
		\nm{u_h(t^{n})-u_h^{n}} \leq e^{\tmu t^n}\left(\nm{u_h(t^{0})-u_h^{0}} + C_{\star} \tau^p\right),
	\end{equation}
	where $C_{\star} = C_{\Pi}\left(e + {\sum_{i=p+1}^s{|\alpha_i|}}\right)\sigma(\tmu, t^{n})$.
\end{LEM}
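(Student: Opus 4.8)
The plan is to run the classical local-truncation-error plus discrete Grönwall argument entirely at the operator level, so that no regularity of $u$ beyond Assumption~\ref{assp-soft} is invoked. Write $e^{tL_h}$ for the solution operator of the method-of-lines system~\eqref{eq-semi}, so $u_h(t) = e^{tL_h}u_h(0)$. Since the statement must tolerate an arbitrary method-of-lines initial value, I would first introduce the auxiliary solution $\widetilde u_h$ of~\eqref{eq-semi} with $\widetilde u_h(0) = u_h^0 = \Pi u(0)$, and split
\begin{equation}
u_h(t^n) - u_h^n = \bigl(u_h(t^n) - \widetilde u_h(t^n)\bigr) + \bigl(\widetilde u_h(t^n) - u_h^n\bigr).
\end{equation}
The first bracket equals $e^{t^n L_h}(u_h(0) - \Pi u(0))$, so Proposition~\ref{prop-edecay} bounds it by $e^{\mu t^n}\nm{u_h(t^0) - u_h^0} \le e^{\tmu t^n}\nm{u_h(t^0) - u_h^0}$ (valid when $\mu\le 0$, and more generally when $\mu\le\tmu$). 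Thus it remains to estimate $\widetilde\xi^{n} := \widetilde u_h(t^n) - u_h^n$, for which $\widetilde\xi^{0} = 0$.

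For that I would set up the one-step recursion: from $\widetilde u_h(t^{n+1}) = e^{\tau L_h}\widetilde u_h(t^n)$ and $u_h^{n+1} = R_s(\tau L_h) u_h^n$ in~\eqref{eq-fully}, inserting $R_s(\tau L_h)\widetilde u_h(t^n)$ yields $\widetilde\xi^{n+1} = R_s(\tau L_h)\widetilde\xi^{n} + T^n$ with local error $T^n := \bigl(e^{\tau L_h} - R_s(\tau L_h)\bigr)\widetilde u_h(t^n)$. By Assumption~\ref{assp-rkacc}, $\alpha_i = \tfrac1{i!}$ for $i\le p$, so all terms of degree at most $p$ cancel and $T^n = \sum_{i=p+1}^{s}(\tfrac1{i!}-\alpha_i)(\tau L_h)^i\widetilde u_h(t^n) + \sum_{i=s+1}^{\infty}\tfrac1{i!}(\tau L_h)^i\widetilde u_h(t^n)$. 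Pulling $(\tau L_h)^{p+1}$ out of each summand, using the time-step hypothesis $\tau\nm{L_h}\le\lambda<1$ to estimate $\nm{(\tau L_h)^{i-(p+1)}} \le \lambda^{i-(p+1)} \le 1$, and bounding the scalar coefficients by $\sum_{i=p+1}^s|\tfrac1{i!}-\alpha_i| + \sum_{i=s+1}^\infty\tfrac1{i!} \le \sum_{i=0}^\infty\tfrac1{i!} + \sum_{i=p+1}^s|\alpha_i| = e + \sum_{i=p+1}^s|\alpha_i|$, I obtain $\nm{T^n} \le \tau^{p+1}(e + \sum_{i=p+1}^s|\alpha_i|)\nm{L_h^{p+1}\widetilde u_h(t^n)}$.

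The step I expect to be the main obstacle is the uniform control of $\nm{L_h^{p+1}\widetilde u_h(t^n)}$; a brute-force estimate would spend $p+1$ inverse inequalities, i.e.\ a factor $h^{-q(p+1)}$, and destroy the rate. The resolution is the structural observation that $L_h^{p+1}\widetilde u_h$ again solves~\eqref{eq-semi}, now with initial value $L_h^{p+1}\Pi u(0)$, so Proposition~\ref{prop-edecay} together with Assumption~\ref{assp-soft}(2) gives $\nm{L_h^{p+1}\widetilde u_h(t^n)} \le e^{\mu t^n}C_{\Pi}$, whence (absorbing $e^{\mu t^n}$ as above) $\nm{T^n} \le \tau^{p+1}(e + \sum_{i=p+1}^s|\alpha_i|)C_{\Pi}$ for every $n$. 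Feeding this into the discrete Grönwall inequality~\eqref{eq-dgronwall} with $a = \nm{R_s(\tau L_h)} \le 1 + \tmu\tau$ (Assumption~\ref{assp-RK}) and $b_n = \nm{T^n}$, using $\widetilde\xi^{0} = 0$ and Remark~\ref{rmk-dg} (which bounds $\sum_{i=0}^{n-1}\nm{R_s(\tau L_h)}^i \le \sigma(\tmu,t^n)\tau^{-1}$), gives $\nm{\widetilde\xi^{n}} \le \sigma(\tmu,t^n)\tau^{-1}\cdot\tau^{p+1}(e + \sum_{i=p+1}^s|\alpha_i|)C_{\Pi} = C_{\star}\tau^p$. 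Adding the two brackets and using $e^{\tmu t^n}\ge 1$ yields the stated estimate. Everything besides that one observation is routine geometric-series bookkeeping and Grönwall.
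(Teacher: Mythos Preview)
Your proof is correct and rests on the same two ideas as the paper's: the one-step recursion $e^{\tau L_h}-R_s(\tau L_h)$ fed into the discrete Gr\"onwall inequality, together with the key observation that $L_h^{p+1}$ commutes with the semidiscrete flow so that $\nm{L_h^{p+1}(\cdot)}$ stays bounded by $e^{\mu t}C_\Pi$ via Proposition~\ref{prop-edecay} and Assumption~\ref{assp-soft}(2).

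The only structural difference is where the auxiliary trajectory $\widetilde u_h$ with $\widetilde u_h(0)=\Pi u(0)$ is introduced. You split $u_h-\widetilde u_h$ off at the outset and run the recursion on $\widetilde u_h-u_h^n$ (with zero initial error), whereas the paper runs the recursion directly on $u_h-u_h^n$, obtaining the $\nm{u_h(t^0)-u_h^0}$ term from Gr\"onwall itself, and only introduces $\widetilde u_h$ later in the proof of Theorem~\ref{thm-soft}. Your ordering has the advantage that the step $\nm{L_h^{p+1}\widetilde u_h(0)}\le C_\Pi$ is cleanly justified; the paper's version tacitly needs $\nm{L_h^{p+1}u_h(0)}\le C_\Pi$ for the semidiscrete initial datum. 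The price you pay is the side condition $\mu\le\tmu$ you flagged when bounding $e^{\mu t^n}$ by $e^{\tmu t^n}$; the paper's direct recursion avoids that comparison for the initial-error term but still carries an $e^{\mu t}$ factor in the local-error bound, so in both arguments the stated constant $C_\star$ is exact only when $\mu\le 0$ (which holds in all the paper's applications).
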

\begin{proof}
	We use the convention $\alpha_i = 0$, $\forall i > s$ throughout the proof. Since 
	$
	\sum_{i=0}^\infty\frac{1}{i!}\nm{\tau L_h}^i\leq \sum_{i=0}^\infty \frac{1}{i!} = e<\infty
	$,
	$e^{\tau L_h}=\sum_{i=0}^\infty\frac{1}{i!}(\tau L_h)^i$ is well-defined and 
	\begin{equation}\label{eq-semi-trun}
		u_h(t^{n+1}) = e^{\tau L_h} u_h(t^n) = R_s(\tau L_h) u_h(t^n) + w^n,
	\end{equation}
	where
	\begin{equation}\label{eq-defw}
	\begin{aligned}
		 w^n := &\;\left(e^{\tau L_h}-R_s(\tau L_h)\right)u_h(t^n) 
		 = \tau^{p+1}\left(\sum_{i=p+1}^\infty (\frac{1}{i!}-\alpha_i) (\tau L_h)^{i-(p+1)}\right)L_h^{p+1} u_h(t^n).
	 \end{aligned}
	\end{equation}	
	Subtracting \eqref{eq-fully} from \eqref{eq-semi-trun}, we have
	\begin{equation}
		u_h(t^{n+1})-u_h^{n+1} =  R_s(\tau L_h) \left(u_h(t^n)-u_h^n\right) + w^n.
	\end{equation}
	Therefore,
	\begin{equation}\label{eq-onestep}
	\begin{aligned}
	\nm{u_h(t^{n+1})-u_h^{n+1}} \leq &\;  \nm{R_s(\tau L_h)} \nm{u_h(t^n)-u_h^n} + \nm{w^n}.
	\end{aligned}
	\end{equation}
	By applying the discrete Gr\"onwall's inequality in Proposition \ref{prop-gronwall}, one can obtain
	\begin{equation}
	\begin{aligned}
	&\;\nm{u_h(t^{n+1})-u_h^{n+1}}
	\leq \nm{R_s(\tau L_h)}^{n+1}\nm{u_h(t^{0})-u_h^{0}} + \sum_{k = 0}^{n}\nm{R_s(\tau L_h)}^k \nm{w^{n-k}}.
	\end{aligned}
	\end{equation}
	With the assumption $\nm{R_s(\tau L_h)} \leq 1 + \tmu \tau$, we can use Remark \ref{rmk-dg} to obtain
	\begin{equation}\label{eq-est}
		\nm{u_h(t^{n+1})-u_h^{n+1}}\leq e^{\tmu t^{n+1}}\nm{u_h(t^{0})-u_h^{0}}+ \sigma(\tmu, t^{n+1})\tau^{-1}\max_{0\leq k\leq n} \nm{w^{k}}.
	\end{equation}
	Now we need to estimate $\nm{w^k}$. Use the definition of $w^k$ in \eqref{eq-defw} and it yields that
	\begin{equation}\label{eq-wi}
		\begin{aligned}
		\nm{w^k} 
		 \leq  &\;\left(\sum_{i=p+1}^\infty (\frac{1}{i!}+|\alpha_i|) \nm{\tau L_h}^{i-(p+1)}\right)\nm{L_h^{p+1} u_h(t^k)}\tau^{p+1}\\
		 		 \leq  &\;\left(\sum_{i=p+1}^\infty (\frac{1}{i!}+|\alpha_i|) \lambda^{i-(p+1)}\right)\nm{L_h^{p+1} u_h(t^k)}\tau^{p+1}\\
		\leq &\;\left(e + {\sum_{i=p+1}^s{|\alpha_i|}}\right) \nm{L_h^{p+1} u_h(t^k)} \tau^{p+1}. 
		\end{aligned}
	\end{equation}
	Since 
		$\partial_t L_h^{p+1}u_h(t) = L_h(L_h^{p+1} u_h(t))$, 
	one can apply Proposition \ref{prop-edecay} to obtain
	 \begin{equation}
	 \nm{L_h^{p+1}u_h(t^k)}\leq e^{\mu t^{k}}\nm{L_h^{p+1}u_h(0)} \leq e^{\mu t^{k}}C_{\Pi},\qquad \forall 0\leq k\leq n.
	 \end{equation}
	This together with \eqref{eq-wi} gives
	\begin{equation}\label{eq-wkest}
		\nm{w^k}\leq e^{\mu t^{n+1}} C_{\Pi} \left(e + {\sum_{i=p+1}^s{|\alpha_i|}}\right) \tau^{p+1}, \qquad \forall 0\leq k\leq n .
	\end{equation}
	The proof can be completed by substituting \eqref{eq-wkest} into \eqref{eq-est}. 
\end{proof}

\begin{THM}[Error estimates of the fully discrete scheme]\label{thm-soft}
	Under the same assumptions and the same definition of $C_\star$ as those in Lemma \ref{lem-err}, we have
	\begin{equation}
\nm{{u}(t^{n})-u_h^{n}}\leq e^{\max(\mu,\tmu ) t^n} \left(\nm{ u_{h}(0) - \Pi u(0)} + \nm{\Pi u(0)-u_h^{0}}+ C_{\star} \tau^p\right) + E(t,h).
\end{equation}

\end{THM}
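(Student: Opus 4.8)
The plan is to derive the bound from three applications of the triangle inequality, using Lemma~\ref{lem-err} to control the time-integrator error and the two stability statements (Proposition~\ref{prop-edecay} and Assumption~\ref{assp-RK}) to control the rest. The device is to introduce two reference solutions, both started from $\Pi u(0)$: let $\tilde u_h(t)$ solve the method-of-lines scheme \eqref{eq-semi} with $\tilde u_h(0)=\Pi u(0)$, and let $\tilde u_h^{n}$ be the fully discrete iteration \eqref{eq-fully} with $\tilde u_h^{0}=\Pi u(0)$. Since both reference solutions are launched from $\Pi u(0)$, Lemma~\ref{lem-err} applies to the pair $(\tilde u_h,\tilde u_h^{n})$, and as the initial discrepancy $\nm{\tilde u_h(t^0)-\tilde u_h^{0}}$ then vanishes, it yields $\nm{\tilde u_h(t^n)-\tilde u_h^{n}}\leq e^{\tmu t^n}C_\star\tau^p$.

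First I would split
\begin{equation}
\nm{u(t^n)-u_h^n}\leq \nm{u(t^n)-u_h(t^n)}+\nm{u_h(t^n)-\tilde u_h(t^n)}+\nm{\tilde u_h(t^n)-\tilde u_h^{n}}+\nm{\tilde u_h^{n}-u_h^n}.
\end{equation}
The first term is at most $E(t,h)$ by Assumption~\ref{assp-soft}(1). For the second term, $u_h-\tilde u_h$ again solves \eqref{eq-semi} with initial value $u_h(0)-\Pi u(0)$, so Proposition~\ref{prop-edecay} applied to this difference gives $\nm{u_h(t^n)-\tilde u_h(t^n)}\leq e^{\mu t^n}\nm{u_h(0)-\Pi u(0)}$. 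The third term is the estimate from Lemma~\ref{lem-err} recalled above. For the fourth term, linearity of \eqref{eq-fully} gives $\tilde u_h^{n}-u_h^n=R_s(\tau L_h)^n\bigl(\Pi u(0)-u_h^0\bigr)$, and Assumption~\ref{assp-RK} together with Remark~\ref{rmk-dg} gives $\nm{R_s(\tau L_h)}^n\leq e^{\tmu t^n}$, whence $\nm{\tilde u_h^{n}-u_h^n}\leq e^{\tmu t^n}\nm{\Pi u(0)-u_h^0}$.

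Collecting the four bounds and dominating both $e^{\mu t^n}$ and $e^{\tmu t^n}$ by $e^{\max(\mu,\tmu)t^n}$ yields
\begin{equation}
\nm{u(t^n)-u_h^n}\leq E(t,h)+e^{\max(\mu,\tmu)t^n}\Bigl(\nm{u_h(0)-\Pi u(0)}+\nm{\Pi u(0)-u_h^0}+C_\star\tau^p\Bigr),
\end{equation}
which is the claimed estimate.

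Since each ingredient is either a hypothesis or an already-proved statement, there is no genuine difficulty; the point requiring care is simply the bookkeeping of the three distinct initial data entering the argument---$u_h(0)$, carried by the a priori method-of-lines error estimate; $\Pi u(0)$, carried by the two reference solutions through which Lemma~\ref{lem-err} is invoked; and $u_h^0$, carried by the actual fully discrete iteration---and the observation that the passage between them must be measured by the semidiscrete stability constant $\mu$ for the semidiscrete difference and by the fully discrete stability constant $\tmu$ for the fully discrete difference and for Lemma~\ref{lem-err}, which is why $\max(\mu,\tmu)$ rather than $\tmu$ alone appears in the final bound.
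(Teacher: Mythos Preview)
Your argument is correct and follows essentially the same route as the paper. The only cosmetic difference is that the paper uses a three-term split: it applies Lemma~\ref{lem-err} directly to the pair $(\tilde u_h,u_h^n)$, so that the initial discrepancy $\nm{\tilde u_h(t^0)-u_h^0}=\nm{\Pi u(0)-u_h^0}$ is absorbed into that single application, rather than introducing the second reference $\tilde u_h^n$ and handling the fully discrete initial mismatch separately via $\nm{R_s(\tau L_h)}^n$. Both decompositions use the same ingredients and yield the same bound.
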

\begin{proof}
	We denote by $\tilde{u}_{h}(t)$ the solution to \eqref{eq-semi} with $\tilde{u}_{h}(0) = \Pi u(0)$ as the initial condition. Using Lemma \ref{lem-err}, one can get
$
		\nm{\tilde{u}_h(t^{n})-u_h^{n}} \leq e^{\tmu t^n}\left(\nm{\Pi u(0)-u_h^{0}} + C_{\star} \tau^p\right).
$
	We then apply the triangle inequality to obtain 
	\begin{equation}\label{eq-u-uhn}
	\begin{aligned}
		\nm{{u}(t^{n})-u_h^{n}}\leq &\; \nm{u(t^n) - {u}_h(t^{n})} + \nm{ u_h(t^n) - \tilde{u}_h(t^{n})}+ \nm{\tilde{u}_h(t^{n})-u_h^{n}}\\
		\leq &\; \nm{u(t^n) - {u}_h(t^{n})} + \nm{ u_h(t^n) - \tilde{u}_h(t^{n})}+  e^{\tmu t^n}\left(\nm{\Pi u(0)-u_h^{0}} + C_{\star} \tau^p\right)\\
		\leq &\;\nm{ u_h(t^n) - \tilde{u}_h(t^{n})} +  E(t,h) + e^{\max(\mu,\tmu )t^n}\left(\nm{\Pi u(0)-u_h^{0}} + C_{\star} \tau^p\right).
	\end{aligned}
	\end{equation}
	By the linearity of the problem, $\tilde{u}_h - u_h$ is also evolved by $
	\frac{\partial}{\partial t} \left( u_{h} - \tilde{u}_{h}\right) = L_h ( u_{h} - \tilde{u}_{h})$. 
	Therefore, due to Proposition \ref{prop-edecay}, we have
	\begin{equation}\label{eq-semi-1}
	\nm{ u_{h}(t^n) - \tilde{u}_{h}(t^n)} \leq e^{\mu t^n} \nm{ u_{h}(0) - \tilde{u}_{h}(0)} \leq  e^{\max(\mu,\tmu ) t^n} \nm{ u_{h}(0) - \Pi u(0)}.
	\end{equation}
	We plug \eqref{eq-semi-1} into \eqref{eq-u-uhn} to complete the proof. 
\end{proof}
	
	The remaining task is to construct $\Pi$ such that $\nm{L_h^{p+1} \Pi u(0)}$ is bounded. Let us consider the advection equation \eqref{eq-adv}, where $L = -\partial_x$. For sufficiently smooth $w$, it has been shown that
	$L_h^{p+1} \Pi w= \Pi_0 L^{p+1} w$ for FG discretization with $\Pi = \Pio$, and for upwind-biased DG discretization with $\Pi w = (D_{h,\theta}^{-1})^{p+1}\Pi_{0} \partial_x^{p+1} w + \frac{\ip{w,1}}{\ip{1,1}}$. Then
	we have $\nm{L_h^{p+1}\Pi u(0)} = \nm{\Pio L^{p+1}u(0)}\leq \nm{L^{p+1}u(0)}$. Therefore, for these schemes, the fully discrete error estimates can also be obtained with Theorem \ref{thm-soft}. For general problems, although we usually have $L_h\Pi w= \Pio Lw$, $L_h^{p+1}\Pi w = \Pio L^{p+1} w$ 
does not hold for $p \geq 1$. Further efforts have to be made for obtaining error estimates 
through these lines.
	
\section{Conclusions}\label{sec-con}
\setcounter{equation}{0}
In this paper, we study the error estimates of fully discrete RKDG schemes for linear time-dependent PDEs. Under the assumptions that the exact solution is sufficiently smooth, the fully discrete scheme is stable and consistent, and there exists a linear spatial operator satisfying certain properties, then we show the fully discrete scheme has the error estimate $\nm{u(t^n)-u_h^n} = 
\mathcal{O}(h^{k+k'}+\tau^p)$ with $k' \in [0,1]$, where $k$ is the DG polynomial degree and
$p$ is the linear order of the time integrator. The error analysis applies to explicit RK methods of any order and to a wide range of DG schemes beyond hyperbolic problems. We have the following highlights for our analysis. First, the required regularity depends on the order of RK methods, but is independent of the number of stages. Second, by concerning a general discrete operator as that in \cite{chen2009unified}, the analysis can be applied to various semidiscrete schemes to time-dependent PDEs. While the current framework has its limitation: it does not have the mechanism to include  ``jump" terms in the DG discretization, hence would only provide suboptimal error estimates for schemes converging at the rate of $k+\hf$. In these cases, a more refined treatment, as that in \cite{xu2019error}, is needed.

\end{document}